\def\BState{\State\hskip-\ALG@thistlm}
\newtheorem{thm}{Theorem}
\newtheorem{prop}[thm]{Proposition}
\newtheorem{lem}[thm]{Lemma}
\newtheorem{hyp}{Assumption}
\newtheorem*{defi*}{Definition}
\newtheorem*{hyp*}{Assumptions}
\newtheorem{remark}[thm]{Remark}
\newcommand{\po}{\left(}
\newcommand{\pf}{\right)}
\newcommand{\co}{\left[}
\newcommand{\cf}{\right]}
\newcommand{\cco}{\llbracket}
\newcommand{\ccf}{\rrbracket}
\newcommand{\R}{\mathbb R} 
\newcommand{\T}{\mathbb T} 
\newcommand{\Z}{\mathbb Z} 
\newcommand{\N}{\mathbb N} 
\newcommand{\J}{\mathcal J} 
\newcommand{\dd}{\text{d}}
\newcommand{\na}{\nabla}
\newcommand{\1}{\mathbbm{1}}
\newcommand{\nv}[1]{#1}
\title{Adaptive force biasing algorithms: new convergence results and tensor approximations of the bias}
\author[1]{Virginie Ehrlacher}
\author[1]{Tony Lelièvre}
\author[2]{Pierre Monmarché}
\affil[1]{Université Paris-Est - CERMICS (ENPC) - INRIA}
\affil[2]{Sorbonne Universit\'e -LJLL - LCT}
\begin{document}
\maketitle

\begin{abstract}\
A modification of the Adaptive Biasing Force  method is introduced, in which the free energy is approximated by a sum of tensor products of one-dimensional functions. This  enables to handle a larger number of reaction coordinates than the classical algorithm. We prove the algorithm is well-defined and prove the long-time convergence toward a regularized version of the free energy for an idealized version of the algorithm. Numerical experiments demonstrate that the method is able to capture correlations between reaction coordinates.

\medskip

\noindent\textbf{keywords:} Monte Carlo methods ; tensor ; free energy ; importance sampling ; molecular dynamics.

\noindent\textbf{MSC class (2010):} 65C05 ; 65N12.
\end{abstract}

%

\section{Introduction}

Consider $x\in \T^D$ a vector representing the positions of particles with periodic boundary conditions ($\T = \R/\Z$), and a potential energy $V\in \mathcal C^\infty \po \T^D\pf$. 
We are interested in computing expectations of the form

\begin{eqnarray*}
\frac{1}{\int_{\T^D} e^{-\beta V(x)}\dd x}\int_{\T^D} \varphi(x) e^{-\beta V(x)}\dd x & =: & \int_{\T^D} \varphi \dd \mu_{V,\beta}
\end{eqnarray*}
where $\varphi: \T^D \to \mathbb{R}$ is called an observable and $\dd \mu_{V,\beta}:=e^{-\beta V(x)}\dd x$ is the Gibbs law with potential $V$ and inverse temperature $\beta>0$. The large dimension $D$ is so significant that, in practice, 
these quantities have to be computed with Markov Chain Monte Carlo (MCMC) algorithms, which consist in approximating the average of $\varphi$ with respect to $\mu_{V,\beta}$ along dynamics that are ergodic with respect to $\mu_{V,\beta}$. A typical sampler is the overdamped Langevin dynamics
\begin{eqnarray*}
\dd X_t  & = & -\na V(X_t)\dd t + \sqrt{2\beta^{-1}} \dd B_t
\end{eqnarray*} 
where $(B_t)_{t\geqslant 0}$ is a Brownian motion over $\mathbb T^D$. It is ergodic with invariant measure $\mu_{V,\beta}$, so that
\begin{eqnarray*}
\frac1t\int_0^t \varphi(X_s)\dd s & \underset{t\rightarrow\infty}\longrightarrow & \int_{\T^D} \varphi \dd \mu_{V,\beta}\nv{\qquad a.s.}
\end{eqnarray*}
\nv{for all measurable bounded $\varphi$, see e.g. \cite{ActaNumerica} and references therein.} Nevertheless, the convergence of the process (or, in practice, of any alternative Markov process with invariant measure $\mu_{V,\beta}$) toward its equilibrium in the long-time limit may be very slow. 
This is due to the so-called metastability phenomenon, according to which the process remains for  long times in some region of the space, with very rare transitions from one of these metastable regions to another. 
This is related to the multi-modality of the Gibbs measure and the fact MCMC algorithms typically perform local moves, so that leaving a mode of the target measure $\mu_{V,\beta}$ is a rare event. We refer to \cite{LelievreMetastable} for more details on this topic. For this reason, several adaptive methods have been developed in order to 
force the process to leave the metastable traps faster. Among those, we focus on the adaptive biasing force (ABF) algorithm, which may be seen as a particular Importance Sampling method. The general idea is to run a biased process
\begin{eqnarray}\label{eq:Xbiais}
\dd \tilde X_t  & = & -\na V(\tilde X_t)\dd t + \na V_{bias,t}(\tilde X_t)\nv{\dd t}+ \sqrt{2\beta^{-1}} \dd B_t
\end{eqnarray} 
where the biasing potential $V_{bias,t}$ is adaptively constructed from the past trajectory $(\tilde X_s)_{s\in[0,t]}$ in such a way that it is expected to converge to some $V_{bias,\infty}$. Expectations with respect to $\mu_{V,\beta}$ are then recovered through a reweighting step, assuming that ergodicity still holds:
\begin{eqnarray}\label{eq:ImportanceSampling}
\frac{\frac1t\int_0^t \varphi(\tilde X_s) e^{-\beta V_{bias,s}(\tilde X_s)}\dd s}{\frac1t\int_0^t   e^{-\beta V_{bias,s}(\tilde X_s)}\dd s} & \underset{t\rightarrow\infty}\longrightarrow & \frac{\int_{\T^D} \varphi e^{-\beta V_{bias,\infty}}  \dd \mu_{V-V_{bias,\infty},\beta}}{\int_{\T^D} e^{-\beta V_{bias,\infty}} \dd \mu_{V-V_{bias,\infty},\beta}} \ = \ \int_{\T^D} \varphi \dd \mu_{V,\beta}\,.
\end{eqnarray}
Classically, in such an Importance Sampling scheme, the aim is to design a target bias $V_{bias,\infty}$ such that two conditions are met: 1) sampling the biased equilibrium $\mu_{V-V_{bias,\infty},\beta}$ is simpler than the initial problem (i.e. the corresponding overdamped Langevin process is less metastable) and 2) the biased equilibrium is not too far from the initial target so that the exponential weights in \eqref{eq:ImportanceSampling} do not cause the asymptotical variance of the estimator to skyrocket.

In the ABF algorithm, this issue is addressed with the use of so-called reaction coordinates (or collective variables) and the associated free energy as a bias. Reaction coordinates consist of a small number $d\ll D$ of 
macroscopic coordinates of the whole microscopic system $x\in\T^D$. These coordinates are defined through a map $\xi:\T^D \rightarrow \mathcal M$ where $\mathcal M$ is a manifold of dimension $d$. 
In molecular dynamics, for example, $x\in \T^D$ is a vector which gathers the positions of all the different atoms of the system of interest, and $\xi(x)$ typically represents
some distances between particular pairs of atoms, or angles formed by some triplets of atoms. These reaction coordinates should be chosen to capture the main causes of the metastability of the system. More precisely, 
$\xi(X_t)$ should converge to equilibrium as slowly as $X_t$, while the conditional laws $\mathcal L(X\ | \ \xi(X) = z)$ for fixed $z\in \mathcal M$ when $X\sim\mu_{V,\beta}$ should be easier to sample (see \cite{LelievreABF} \nv{or Section~\ref{Subsc-discussResult}} for more detailed considerations). 
In other words, $\xi(x)$ should be a low-dimensional representation of $x$ that captures the slow variables of the system.

To these reaction coordinates $\xi$ is  associated the corresponding free energy $A : \mathcal M \to \mathbb{R}$, given by
\begin{eqnarray*}
A(z) & =& -\frac{1}{\beta} \ln \int_{\{x \in \T^D, \; \xi(x) = z\}} e^{-\beta V(x)} \delta_{\xi(x)-z}(\dd x)\,, 
\end{eqnarray*}
where $\delta_{\xi(x)-z}$ is the so-called delta measure, which can be defined from the Lebesgue measure on the submanifold $\{x\in\T^D,\ \xi(x)=z\}$ through the co-area formula, see for example \cite[Section 3.2.1]{LelievreFreeEnergy}.
This definition ensures that, if $X$ is a random variable with law $\mu_{V,\beta}$ on $\T^D$, then $\xi(X)$ is a random variable with law $\mu_{A,\beta}$ on $\mathcal M$. The heuristic of the ABF algorithm is the following. Suppose that $\mathcal M$ is compact. If we were to sample from the process 
\begin{eqnarray}\label{eq:YbiaisA}
\dd Y_t  & = & -\na \po V - A \circ \xi \pf (Y_t)\dd t + \sqrt{2\beta^{-1}} \dd B_t,
\end{eqnarray} 
the equilibrium would be $\mu_{V-A\circ \xi,\beta}$, whose image through $\xi$, by definition of $A$, is the uniform measure on $\mathcal M$. This means that there would be no more metastability along $\xi$, 
since all the regions of $\mathcal M$ would be equally visited by $\xi(Y_t)$.  Unfortunately, it is not possible to use directly this free-energy biased dynamics in practice, since it would require the knowledge of $A$ 
and thus the computation of expectations in large dimension. The idea of the ABF method is to learn $A$ on the fly, i.e. to run a process $\left(\tilde X_t\right)_{t\geqslant 0}$ solving \eqref{eq:Xbiais} with a biasing potential 
$V_{bias,t}$ constructed from $\left(\tilde X_s\right)_{s\in[0,t]}$ and designed to target $A\circ\xi$ in the longtime limit.

In practice, the choice of good reaction coordinates is a difficult problem. Up to recently, their definition has been based on the knowledge and 
intuition of specialists.  The question of the automatic learning of suitable reaction coordinates is currently a vivid research area, see for instance \cite{Schutte,Brandt} \nv{and the recent review \cite{RevueMLMD}}. 
Moreover, some techniques like the orthogonal space random walk \cite{OSRW}  provide a general way to construct new reaction coordinates from previous ones. Due to these recent progresses,
one would like to consider a relatively large  $d$. In ABF, $V_{bias,t}$ is  a function of the $d$ reaction coordinates. From a numerical point of view, since $V_{bias,t}$  is adaptively 
learned on the fly, its values have to be kept in memory, which requires a grid whose size typically scales exponentially with $d$. This limits the application of ABF to small dimensional reaction coordinates ($d\leqslant 4$). 
%
 The aim of the present work is to lift this limitation by approximating $V_{bias,t}$ using a sum of tensor products of one-dimensional functions, which reduces the size of the memory to $\mathcal O(dm)$ where $m$ is
the number of tensor terms. Remark that this can in turn help for the definition of good reaction coordinates, 
by considering as candidates a relatively large number of reaction coordinates and then conduct a statistical study to select or combine some of them. \nv{A basic idea would be to conduct a 
sensitivity analysis of the free energy, computing for instance for each reaction coordinate $\xi_i$ the best approximation in the least square  sense of the (estimated) free energy by a function only of 
the other reaction coordinates $(\xi_j)_{j\neq i}$ (which is easily done for a function given as a sum of tensor products, see \cite{konakli2016global}) and then discarding 
the reaction coordinate whose disparition gives the lowest  error.} Nevertheless, this question exceeds the scope of the present work, in which $\xi$ is supposed to be given.

\medskip

Note that the question of increasing the number of reaction coordinates in adaptive biasing algorithms has also been considered in the Bias-Exchange algorithm introduced in \cite{BiasExchange}, where several replicas of the system are run in parallel, each associated with a one-dimensional reaction coordinate. The replicas exchange their bias according to some Metropolis-Hastings probability, so that each replica eventually feels the bias in all the different directions of the reaction coordinates. Nevertheless, in this case where one-dimensional reaction coordinates are treated independently one from the others, the system remains very sensitive to correlations between reaction coordinates (the same goes for the generalized ABF introduced in \cite{ChipotEGABF}), contrary to the algorithm introduced in the present work.

Besides, let us mention that numerical methods involving both tensor approximation and Monte Carlo methods for molecular dynamics are also introduced  in  \cite{Schutte2,Schneider} for other purposes.

\medskip

In the rest of this introduction we provide  a  presentation of the ABF algorithm we consider in this work in a simple framework, and refer to   Section \ref{SubSectionDiscussion} for generalizations.  The presentation is divided into two parts. In Section~\ref{SectionRC}, we present the reference ABF algorithm we consider, without the tensor-product approximation. In Section~\ref{SubsectionTensor}, we introduce the tensor-product approximation of the bias. These two ingredients are then combined to yield the Tensor-ABF algorithm in Section~\ref{Sec:defTABF}. The two algorithms and associated convergence proofs of the reference ABF algorithm and of the tensor-product approximation are presented separately since we think they have their own interest.

\subsection{Free energy and the ABF algorithm}\label{SectionRC}

%

%
%

Let us first present the ABF algorithm in a simple framework (see \cite{ChipotHenin,DarvePorohill,LelievreABF} for more general settings). From now on, we write 
\[\mu \ =\  \mu_{V,\beta}\,,\]
 seen both as a probability law and as the density of the latter with respect to the Lebesgue measure.
 
 Let us assume that $\mathcal M=\T^d$ and that, for all $x=(q,z) \in \T^D = \T^p \times \T^d$, $\xi(x)= \xi(q,z)=z$  where $p=D-d$. 
 
 \medskip
 
At first sight, this may seem a very restrictive choice of reaction coordinates. But, using extended variables (see \cite{ChipotEABF}), this can be applied actually in very general contexts. We refer the reader to~Section~\ref{SubSectionDiscussion} 
for more details on this point.

\medskip

The associated free energy for $z\in\T^d$ is then
%
\begin{eqnarray*}
A(z)  & =& -\frac{1}{\beta}\ln \int_{\T^p} e^{-\beta V(q,z)}\dd q\,. 
\end{eqnarray*}
Following the previous discussion, our aim is then to define for all time $t\geqslant 0$ a  function $A_t$ on $\T^d$ and to sample the process
\begin{eqnarray}\label{EqEABF}
& & \left\{
\begin{array}{rcl}
\dd Q_t & = & - \na_q V (Q_t,Z_t) \dd t + \sqrt{2\beta^{-1}} \dd B^1_t\\
\dd Z_t &= &  - \na_z    V  (Q_t,Z_t)\dd t + \na_z A_t(Z_t) \dd t + \sqrt{2\beta^{-1}} \dd B^2_t\,,
\end{array}\right.
\end{eqnarray}
where $B^1$ and $B^2$ are independent Brownian motions respectively of dimension $p$ and $d$, in such a way that $A_t$ gets close to $A$ in large time. 

 Note that the free energy $A$ satisfies
\begin{eqnarray*}
\na_z A(z) & =& \frac{\int_{\T^p} \na_z V(q,z) e^{-\beta V(q,z)}\dd q}{\int_{\T^p}   e^{-\beta V(q,z)}\dd q} \ = \ \mathbb E_\mu \left[  \na_z V(Q,Z)\ |\ Z=z \right].
\end{eqnarray*} 
The following alternative equivalent characterization of $A$ will be useful in the sequel. \nv{Denoting $H^1(\T)$ the set of functions of $L^2(\T^d)$ with a weak gradient in $L^2(\T^d)$,}  define
 \begin{equation}\label{eq:defH}
 H \ := \  \left\{ f\in H^1(\T^d)\, :\, \ \int_{\T^d} f(z) \dd z = 0\right\},
 \end{equation}
and let us denote by $\mathcal{P}(\T^p \times \T^d)$ the set of probability measures on $\T^p \times \T^d = \T^D$. For all $\nu\in \mathcal{P}(\T^p \times \T^d)$ and $f\in H$, let us define
 $$
 \mathcal{E}_\nu(f):= \int_{\T^p\times\T^d} |\na_z V(q,z) - \na _z f(z)|^2 \dd \nu(q,z).
 $$
\nv{As detailed in \cite{LelievreAlrachid}}, up to an additive constant (like the potential $V$, the free energy is in fact  always defined up to an additive constant), $A$ is the unique minimizer in $H$ of the functional $\mathcal{E}_\mu$, i.e.

\begin{equation}\label{EqProblemeMini}
A \nv{-\int_{\mathbb T^d} A(z)\dd z} = \mathop{\rm argmin}_{f\in H} \mathcal{E}_\mu(f).
\end{equation}

At time $t\geqslant 0$, a trajectory $(Q_s,Z_s)_{s\in\co 0,t\cf}$ of \eqref{EqEABF} is available. Let $\nu_t$ be the probability measure on $\T^p\times \T^d = \T^D$ defined as follows: for all $\varphi \in \mathcal{C}(\T^p \times \T^d)$, 
\begin{eqnarray}\label{Eq-Empiric-Distrib}
\int_{\T^p \times \T^d}\varphi \dd \nu_t & =&  \po \int_0^t e^{-\beta A_s(Z_s)}\dd s\pf^{-1}  \int_0^t \varphi\po Q_s,Z_s\pf e^{-\beta A_s(Z_s)} \dd s\,.
\end{eqnarray}
 We call $\nu_t$ the unbiased occupation distribution of the process. By the ergodic limit \eqref{eq:ImportanceSampling}, $\nu_t$ is expected to converge \nv{weakly} to $\mu$ as $t$ goes to infinity  \nv{almost surely} (at least if $A_t$ does not change too fast with $t$). 

However, note that $\nu_t$ is a singular probability measure, so that the minimization problem 
$$
\mathop{\inf}_{f\in H} \mathcal{E}_{\nu_t}(f)
$$
is ill-posed. To circumvent this difficulty, one may consider two different alternatives to regularize the problem which we detail hereafter. Consider a smooth symmetric positive density kernel $K\in\mathcal C^\infty(\T^d\times \T^d,\R_+)$ with 
\begin{eqnarray}\label{Eq:conditionsK}
\int_{\T^d} K(y,z)\dd z = 1 & \qquad\text{and}\qquad  & K(y,z) = K(z,y)\qquad\qquad \forall y,z\in \T^d\,.
\end{eqnarray}
In practice, $K(y,\cdot)$ should be close to a Dirac mass at $y$ (see Theorem \ref{TheoremTempsLong} below). For instance, a possible choice for $K$ would be the so-called von-Mises kernel for a given small parameter $\varepsilon >0$, i.e.
\begin{equation}\label{VonMises}
K(y,z) \ \propto \  \prod_{i=1}^d \exp\po-\frac{1}{\varepsilon^2/2}\sin^2\po \frac{z_i-y_i}2 \pf\pf\,.
\end{equation}
Now, consider also a regularization parameter $\lambda\geqslant 0$. For all $\nu\in \mathcal{P}(\T^p\times \T^d)$ and all $f\in H$, we define
\begin{equation}\label{EqJ1}
 \mathcal J_{\nu} (f) \ := \ \int_{\T^p\times\T^d\times\T^d} |\na_y V(q,y) - \na_z f(z) |^2 K(y, z) \dd z\dd \nu ( q,  y)  + \lambda\int_{\T^d} |\na_z f(z)|^2 \dd z\,,
\end{equation}
Note that, as $K(y,\cdot)$ converges \nv{weakly} toward the Dirac mass at $y$ and $\lambda$ goes to 0, for all $f\in H$, $\mathcal J_\nu(f)$ converges towards $\mathcal{E}_\nu(f)$. 
The interest of introducing $\mathcal J_\nu$ is that, thanks to the regularization, the minimization problem is now well-posed:

\begin{prop}\label{prop:minimJt}
Assume that either  $K>0$ on $\T^d\times\T^d$ or $\lambda>0$. Then, for all    $\nu\in\mathcal P(\T^p\times \T^d)$, $\mathcal J_{\nu}$ admits a unique minimizer in $H$.
\end{prop}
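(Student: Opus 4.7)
The plan is to view $\mathcal{J}_\nu$ as a coercive, strictly convex, continuous quadratic functional on the Hilbert space $H$, so that existence and uniqueness of a minimizer follows from the Lax--Milgram theorem (equivalently, from the direct method of the calculus of variations).

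First, I would expand the square and use Fubini to rewrite
$$\mathcal{J}_\nu(f) \;=\; C(\nu) \;-\; 2\int_{\T^d} B_\nu(z)\cdot\na_z f(z)\,\dd z \;+\; \int_{\T^d} \bigl(\rho_\nu(z)+\lambda\bigr)\,|\na_z f(z)|^2\,\dd z,$$
where $C(\nu) := \int |\na_y V(q,y)|^2 K(y,z)\,\dd z\,\dd\nu(q,y)$ does not depend on $f$, the vector field
$$B_\nu(z) \;:=\; \int_{\T^p\times\T^d} \na_y V(q,y)\, K(y,z)\,\dd\nu(q,y)$$
is bounded on $\T^d$ (since $\na_y V$ and $K$ are continuous on the compact sets $\T^D$ and $\T^d\times\T^d$), and the nonnegative density
$$\rho_\nu(z) \;:=\; \int_{\T^p\times\T^d} K(y,z)\,\dd\nu(q,y)$$
is continuous on $\T^d$.

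Next, I would establish coercivity of the quadratic part on $H$. The key point is that, under either of the stated alternatives, the weight $\rho_\nu+\lambda$ admits a uniform positive lower bound $\eta>0$ on $\T^d$: if $\lambda>0$ this is trivial since $\rho_\nu\geqslant 0$; if instead $K>0$ on the compact set $\T^d\times\T^d$ then by continuity $K\geqslant c>0$, which, together with the fact that $\nu$ is a probability measure, yields $\rho_\nu(z)\geqslant c$ for every $z\in\T^d$. Combining this lower bound with the Poincaré--Wirtinger inequality applied to zero-mean functions, one gets
$$\int_{\T^d}\bigl(\rho_\nu(z)+\lambda\bigr)|\na_z f(z)|^2\,\dd z \;\geqslant\; \eta\,\|\na f\|_{L^2(\T^d)}^2 \;\geqslant\; \eta\, C_P\,\|f\|_{H^1(\T^d)}^2$$
for some constant $C_P>0$ depending only on $d$. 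Meanwhile, the linear form $f\mapsto \int_{\T^d} B_\nu\cdot \na f$ is continuous on $H^1(\T^d)$ by Cauchy--Schwarz and the boundedness of $B_\nu$. Applying Lax--Milgram to the continuous, symmetric, coercive bilinear form $(f,g)\mapsto \int_{\T^d}(\rho_\nu+\lambda)\,\na f\cdot\na g\,\dd z$ on the closed subspace $H$ of $H^1(\T^d)$ produces a unique $f^*\in H$ at which the Euler--Lagrange equation $D\mathcal{J}_\nu(f^*)=0$ holds; strict convexity then ensures $f^*$ is the unique minimizer.

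The only delicate step is the coercivity argument, where the two cases of the assumption have to be handled in parallel to secure the uniform positive lower bound on $\rho_\nu+\lambda$. Everything else is routine Hilbert-space analysis, using the smoothness of $V$ and $K$ and the compactness of $\T^d$.
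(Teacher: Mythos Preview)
Your proof is correct and follows essentially the same route as the paper: rewrite $\mathcal J_\nu$ as a quadratic functional on $H$, observe that the weight $\rho_\nu+\lambda$ (equivalently the paper's $(1+\lambda)\theta_\nu$) is bounded below by $\lambda+\min K>0$, invoke Poincar\'e--Wirtinger to get coercivity on $H$, and conclude by strict convexity. The paper states this more tersely as ``a direct consequence of the strict convexity of $\mathcal J$'', with the details of the norm equivalence spelled out at the start of Section~\ref{SectionTensor}.
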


\nv{This is  a direct consequence of the strict convexity of $\J$, see Section~\ref{SectionTensor}.} In summary, in the whole article, we work under the following conditions.

\begin{hyp}\label{Hypo}
$V\in\mathcal C^\infty(\T^D)$, $D\geqslant 3$, $\beta>0$, $\lambda \geqslant 0$ and $K\in\mathcal C^\infty(\T^d\times \T^d,\R_+)$ satisfies~\eqref{Eq:conditionsK}. Moreover, either  $K>0$ or $\lambda>0$.
\end{hyp}

\medskip

We now have all the elements to define the reference ABF algorithm in this work, see Algorithm \ref{algo:ABF} below.

\begin{algorithm}[h!]
\caption{ABF algorithm}\label{algo:ABF}
\begin{algorithmic}[1]

\BState \textbf{Input:}

\State Initial condition $(q_0,z_0) \in \T^p\times\T^d $
\State Brownian motion $(B_t^1,B_t^2)_{t\geqslant 0}$ on $\T^p\times\T^d$
\State Regularization parameters $K$, $\lambda$
\State Update period $T_{up}>0$, number of updates $N_{up}\in\N_*$, total simulation time $T_{tot} = T_{up} N_{up}$

\medskip
\BState \textbf{Output:}
\State Estimated free energy $A_{T_{tot}} \in H$
\State Trajectory $(Q_t,Z_t)_{t\in[0,T_{tot}]}\in \mathcal C\po [0,T_{tot}],\T^p\times\T^d\pf$

\medskip
\BState \textbf{Begin:}

\State Set $(Q_0,Z_0)=(q_0,z_0)$.
\State Set $A_0(z) = 0$ for all $z\in\T^d$.
\State Set $t_k = kT_{up}$ for all $k\in\cco 0,N_{up}\ccf$.

\For{$k\in\cco 1,N_{up}\ccf$}

\State Set $A_t = A_{t_{k-1}}$ for all $t\in[t_{k-1},t_k)$.
\State Set  $(Q_t,Z_t)_{t\in[t_{k-1},t_{k}]}$ to be the solution of \eqref{EqEABF} with initial condition $(Q_{t_{k-1}},Z_{t_{k-1}})$ at time $t_{k-1}$.
\State Set $A_{t_k}$ to be the minimizer in $H$ of $\mathcal J_{\nu_{t_k}}$ given by \eqref{Eq-Empiric-Distrib} and \eqref{EqJ1}.
\EndFor 
\State \textbf{end for}
 
  \medskip
\State \textbf{Return} $A_{T_{tot}}$ and $(Q_t,Z_t)_{t\in[0,T_{tot}]}$.
\end{algorithmic}
\end{algorithm}

Remark that, contrary to the cases studied in other theoretical works like \cite{LelievreABF,LelievreAlrachid,BenaimBrehier}, in Algorithm~\ref{algo:ABF}, the bias $A_t$ is piecewise constant in time, with updates at the times $t_k$, $k\in\cco 1,N_{up}\ccf$. This is due to the fact that, as will be detailed in Section~\ref{SubsectionTensor}, the bias updates are numerically demanding in our case, and thus we cannot perform them at each timestep.

We prove in Section \ref{SectionCVTempslong} the long-time convergence of Algorithm~\ref{algo:ABF}:

\begin{thm}\label{TheoremTempsLong}
Under Assumption \ref{Hypo}, let $(Q_t,Z_t,A_t)_{t\geqslant 0}$ be given by Algorithm \ref{algo:ABF}  (\nv{with a fixed $T_{up}>0$ and $N_{up}=+\infty$ so that $T_{tot}=+\infty$ and the process is defined for all positive times}). Then, as $t\rightarrow+\infty$, almost surely, $\nu_t$ \nv{given by \eqref{Eq-Empiric-Distrib}} weakly converges toward $\mu$ and
\begin{eqnarray*}
\|\na A_t - \na A_* \|_{\infty } & \underset{t\rightarrow\infty}\longrightarrow & 0,
\end{eqnarray*}
where $A_*$ is the unique minimizer in $H$ of $\J_{\mu}$. 
Moreover, $A_*$ satisfies 
\begin{multline}\label{Eq-Erreur-A-A*}
\int_{\T^d} |\na A(z) - \na A_*(z) |^2 \po \int_{\T^p\times\T^d} K(y,z) \mu(q,y)\dd q\dd y\pf \dd z \\
\leqslant\ 4 \|\na^2 A\|_\infty  \underset{y\in \T^d}\sup \int_{\T^d} |y-z|^2 K(y,z)\dd z +  2\lambda\int_{\T^d} |\na A(z)|^2 \dd z\,.
\end{multline} 
\end{thm}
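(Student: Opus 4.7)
My plan proceeds in three stages: (A) uniform $C^k$ bounds on the bias $A_t$, (B) almost sure weak convergence of $\nu_t$ toward $\mu$, and (C) identification of the limit of $A_t$ together with the quantitative bound \eqref{Eq-Erreur-A-A*}.

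For (A), testing the minimality of $A_{t_k}$ in $\J_{\nu_{t_k}}$ against the test function $f\equiv 0$ gives $\J_{\nu_{t_k}}(A_{t_k})\leqslant \|\nabla V\|_\infty^2$. Combined with the coercivity of $\J$ on $H$ produced, via Proposition~\ref{prop:minimJt}, either by the quadratic $\lambda\int|\nabla f|^2$ term or by the positivity of $K$, this yields a uniform $H^1(\T^d)$ bound on $A_{t_k}$. Since $A_{t_k}$ furthermore solves the linear elliptic Euler--Lagrange equation associated with $\J_{\nu_{t_k}}$, in which $\nu_{t_k}$ enters only through bounded moments, standard elliptic regularity upgrades this to uniform $C^k$ bounds for every $k\geqslant 0$. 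In particular $A_t$ is uniformly Lipschitz in $t$, so \eqref{EqEABF} is well-posed and the instantaneous biased Gibbs measure $\mu_{V-A_t\circ\xi}$ satisfies a Poincar\'e inequality with constant uniform in $t$.

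For (B), the cornerstone is the importance-sampling identity $e^{-\beta A(z)}\mu_{V-A\circ\xi}\propto\mu$ for any $A$. Differentiating \eqref{Eq-Empiric-Distrib}, for every smooth $\varphi$,
\begin{equation*}
\frac{d}{dt}\int\varphi\,\dd\nu_t \ = \ \frac{e^{-\beta A_t(Z_t)}}{\int_0^t e^{-\beta A_s(Z_s)}\dd s}\po \varphi(Q_t,Z_t)-\int\varphi\,\dd\nu_t\pf,
\end{equation*}
so that $\nu_t$ evolves as a stochastic approximation with step size of order $1/t$. On each block $[t_{k-1},t_k]$ the bias is frozen and the biased dynamics is ergodic at a rate uniform in $k$ by (A); the importance-sampling identity then shows that averaging the reweighted observable over the block recovers $\int\varphi\,\dd\mu$. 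A martingale / asymptotic pseudo-trajectory argument in the spirit of \cite{BenaimBrehier} propagates this to the almost sure weak convergence of $\nu_t$ toward $\mu$.

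For (C), the $C^2$ bound from (A) makes $\{A_t\}_{t\geqslant 0}$ relatively compact in $C^1(\T^d)$; joint continuity of $(\nu,f)\mapsto\J_\nu(f)$ under weak convergence of $\nu$ combined with the uniqueness part of Proposition~\ref{prop:minimJt} forces any accumulation point to be $A_*$, whence $\|\nabla A_t-\nabla A_*\|_\infty\to 0$. The bound \eqref{Eq-Erreur-A-A*} is obtained by using $A$ itself as a competitor in $\J_\mu(A_*)\leqslant\J_\mu(A)$: the decomposition $\nabla_y V(q,y)-\nabla f(z)=[\nabla_y V(q,y)-\nabla A(y)]+[\nabla A(y)-\nabla f(z)]$ together with the conditional cancellation $\int[\nabla_y V(q,y)-\nabla A(y)]\mu(q|y)\,\dd q=0$ eliminates the cross term in the $q$-integration and reduces $\J_\mu(f)$, up to an $f$-independent constant, to $\int|\nabla A(y)-\nabla f(z)|^2 K(y,z)\mu(q,y)\,\dd q\,\dd y\,\dd z+\lambda\int|\nabla f|^2\,\dd z$. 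Writing this for $f=A$ and $f=A_*$, splitting the resulting cross product $(\nabla A(y)-\nabla A(z))\cdot(\nabla A(z)-\nabla A_*(z))$ by Young's inequality, and bounding $|\nabla A(y)-\nabla A(z)|\leqslant\|\nabla^2 A\|_\infty\,|y-z|$ produces the claimed inequality.

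The main difficulty I expect lies in (B): the bias is constant only on intervals of fixed length $T_{up}$ that does not shrink with $t$, so the usual vanishing-step-size ODE method does not apply off the shelf. One must carefully justify the ergodic averaging on each block and show that the influence of the weighted history on $[0,t_{k-1}]$ decays as $k\to\infty$ while at the same time maintaining the uniform $C^k$ estimates of (A).
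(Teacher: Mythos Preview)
Your parts (A) and (C) follow the paper closely. For (A), the paper likewise obtains uniform $C^k$ bounds on $A_{t_k}$ by elliptic regularity on the Euler--Lagrange equation \eqref{EqEulerJtilde}, with coefficients $\theta_{\nu_t},F_{\nu_t}$ bounded in every $C^m$ uniformly in $t$. For (C), the paper does something slightly more explicit than your compactness argument: it first upgrades the weak convergence $\nu_t\Rightarrow\mu$ to uniform convergence of $\theta_{\nu_t}\to\theta_\mu$, $F_{\nu_t}\to F_\mu$ and all their derivatives (via an $\varepsilon$-net and the uniform Lipschitz bounds), then subtracts the two Euler--Lagrange equations and runs energy estimates plus Sobolev embedding. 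Your $C^2$-precompactness plus continuity-of-minimizers route works too. The derivation of \eqref{Eq-Erreur-A-A*} via $\J_\mu(A_*)\leqslant\J_\mu(A)$ and the conditional cancellation is exactly the paper's.

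For (B), the martingale intuition is right but the mechanism you describe is off, and the difficulty you flag is not the real one. On a single block of fixed length $T_{up}$ the process has no time to equilibrate, so ``averaging the reweighted observable over the block recovers $\int\varphi\,\dd\mu$'' is false as stated. Conversely, your worry that ``the usual vanishing-step-size ODE method does not apply'' is misplaced: you already observed that the step size in the evolution of $\nu_t$ is $O(1/t)$; what matters for the piecewise-constant bias is that $\|\nabla A_{t_k}-\nabla A_{t_{k-1}}\|_\infty=O(1/k)$, because $\nu_{t_k}-\nu_{t_{k-1}}$ is an $O(1/k)$ perturbation and $A_{t_k}$ depends smoothly on $\nu_{t_k}$ through the elliptic problem (this is the paper's Lemma~\ref{LemQAkQAr}). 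The paper then bypasses any block-averaging argument by exploiting the key structural fact that, after the random time change $\tau(t)=\int_0^t e^{-\beta A_s(Z_s)}\dd s$, the time-inhomogeneous generator $L_t$ has invariant measure $\mu$ for \emph{every} $t$ (a direct consequence of the reweighting in \eqref{Eq-Empiric-Distrib}). One then writes $\Pi\varphi=L_{A_s}R_{A_s}\varphi$ with the Poisson solution $R_S\varphi=-\int_0^\infty P_u^S\Pi\varphi\,\dd u$, applies It\^o's formula on each block, and obtains
\[
\tau(t)\bigl(\nu_t(\varphi)-\mu(\varphi)\bigr)\ =\ \text{(bounded boundary)}\ +\ \sum_{t_k\leqslant t}\bigl(R_{A_{t_{k-1}}}-R_{A_{t_k}}\bigr)\varphi(X_{t_k})\ -\ \sqrt{2/\beta}\int_0^t\nabla R_{A_s}\varphi(X_s)\,\dd B_s.
\]
The jump sum is $O(\ln t)$ by the $O(1/k)$ bias increments and a Lipschitz estimate on $S\mapsto R_S$; the martingale has variance $O(t)$; dividing by $\tau(t)\asymp t$ gives an $O(1/t)$ mean-square bound, and almost sure convergence follows by Borel--Cantelli along a geometric subsequence plus Lipschitz interpolation. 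So no pseudo-trajectory machinery is needed: the limiting ODE is trivial precisely because all $L_t$ share the same equilibrium, and this is what you should use.
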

Note that \eqref{Eq-Erreur-A-A*} implies that, as $\lambda$ and $\sup_{y\in \T^d} \int_{\T^d} |y-z|^2 K(y,z)\dd z$ go to zero, $A_*$ converges in $H$ to $A\nv{-\int A}$ (which corresponds to $\lambda = 0$ and $K(y,z) = \delta_y(z)$). 

\nv{The almost sure weak convergence of $\nu_t$ toward $\mu$ implies, of course, the almost sure convergence of the importance sampling estimator $\int_{\T^D} \varphi \dd \nu_t $ toward the target $\int_{\T^D} \varphi \dd \mu$ for all continuous observable $\varphi$.}
 
 \medskip
 
 The  long-time convergence of a similar ABF algorithm  has been established in \cite{LelievreAlrachid} but in a case \nv{where, instead of  its occupation measure, the process interacts with its} law  at time $t$. Rather than a self-interacting process (i.e. a single trajectory with memory), 
 this corresponds to a system of $N$ interacting particles (with no memory), and more precisely to the mean-field limit as $N$ goes to infinity of this system. \nv{The techniques to study such a non-linear process is completely different from our non-Markovian case.} Moreover,
 a result similar to Theorem \ref{TheoremTempsLong} has been established in \cite{BenaimBrehier} for a closely related self-interacting process, 
 the adaptive biasing potential algorithm.  In addition, in the recent preprint \cite{BenaimBrehierMonmarche}, a similar result is established for the ABF algorithm but when the occupation measure is not unbiased (see the discussion in Section \ref{Sec-Discuss-Real-Implement}).
 
 \medskip
 
 \nv{The previous qualitative result states that the algorithm is consistent, but gives no information on its efficiency. We now state  that the asymptotic variance of the estimators obtained from the ABF algorithm is the same as in the case of a process with constant biasing potential equal to $A_*$. More precisely, consider $X^* = (Q^*,Z^*)$ the solution of 
 \begin{eqnarray*} 
& & \left\{
\begin{array}{rcl}
\dd Q_t^* & = & - \na_q V (Q_t^*,Z_t^*) \dd t + \sqrt{2\beta^{-1}} \dd B^1_t\\
\dd Z_t^* &= &  - \na_z    V  (Q_t^*,Z_t^*)\dd t + \na_z A_*(Z_t^*) \dd t + \sqrt{2\beta^{-1}} \dd B^2_t\,,
\end{array}\right.
\end{eqnarray*}
 where $A_*$ is given by Theorem~\ref{TheoremTempsLong}, and let 
\begin{eqnarray*}
  \nu_t^* & =&  \po \int_0^t e^{-\beta A_*(Z_s)}\dd s\pf^{-1}  \int_0^t \delta_{X_s^*} e^{-\beta A_*(Z_s)} \dd s\,.
\end{eqnarray*}

\begin{thm}\label{Theorem:variance_asymptotique}
Under the settings of Theorem~\ref{TheoremTempsLong}, there exist $C>0$ such that for all $t\geqslant 0$ and all $\varphi \in \mathcal C\po \T^D\pf$,
\[\mathbb E \po |\nu_t (\varphi) - \mu(\varphi)|^2 \pf \ \leqslant \ \frac{C}{t} \|\varphi\|_\infty^2\,.\]
Moreover, $t\mathbb E \po |\nu_t (\varphi) - \mu(\varphi)|^2 \pf $ converges as $t \rightarrow +\infty$ to a limit $\sigma^2_\infty(\varphi) \in \R_+$, which is also the limit of $t\mathbb E \po |\nu_t^* (\varphi) - \mu(\varphi)|^2 \pf $.
\end{thm}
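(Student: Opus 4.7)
The strategy is to transfer the classical CLT from the reference Markov process $X^*$ to the non-Markovian ABF process, exploiting the almost sure convergence $\|\nabla A_t - \nabla A_*\|_\infty \to 0$ given by Theorem~\ref{TheoremTempsLong}.

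\emph{Step 1 (CLT for $X^*$).} The reference process $X^*$ is a smooth uniformly elliptic reversible diffusion on the compact torus $\T^D$, hence has a spectral gap and invariant probability measure $\mu^* := \mu_{V - A_*\circ\xi,\beta}$. Denoting its generator by $\mathcal L^*$ and setting $c_* := \int e^{-\beta A_*\circ\xi}\dd \mu^*$, I would introduce the smooth bounded solution $\psi$ of the Poisson equation $\mathcal L^*\psi = -(\varphi - \mu(\varphi))\, e^{-\beta A_*\circ\xi}/c_*$. A standard It\^o--martingale-CLT argument combined with the importance-sampling identity~\eqref{eq:ImportanceSampling} yields the CLT for $\nu_t^*(\varphi)$, with an explicit expression for $\sigma_\infty^2(\varphi)$ in terms of $\psi$.

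\emph{Step 2 (It\^o decomposition for the ABF process).} Next, I would apply It\^o's formula to $\psi$ along the ABF trajectory $(X_s)_{s\geqslant 0}$. Since the ABF generator at time $s$ decomposes as $\mathcal L_s = \mathcal L^* + \langle \nabla (A_s - A_*), \nabla_z\cdot\rangle$, after rearrangement and substitution of $e^{-\beta A_s}$ for $e^{-\beta A_*}$ in the weights one obtains
\[\int_0^t\po \varphi(X_s) - \mu(\varphi)\pf e^{-\beta A_s(Z_s)}\dd s \ = \ c_*\co\psi(X_0) - \psi(X_t) + M_t + R_t\cf + W_t,\]
where $M_t$ is a continuous martingale with quadratic variation $\frac{2}{\beta}\int_0^t |\nabla\psi|^2(X_s)\dd s$, the remainder $R_t := \int_0^t \langle \nabla(A_* - A_s)(Z_s), \nabla_z \psi(X_s)\rangle\dd s$ accounts for the non-Markov effect of the evolving bias, and the weight error satisfies $|W_t| \leqslant C\int_0^t\|\nabla(A_s - A_*)\|_\infty\dd s$ (using Poincar\'e--Wirtinger on $\T^d$ combined with $\int(A_s - A_*)\dd z = 0$).

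\emph{Step 3 (Conclusion).} An ergodic theorem for the non-reweighted occupation measure $\frac{1}{t}\int_0^t \delta_{X_s}\dd s$, proved along the same lines as Theorem~\ref{TheoremTempsLong}, gives $\langle M\rangle_t/t \to \frac{2}{\beta}\int|\nabla\psi|^2\dd \mu^*$ almost surely, and the martingale CLT then yields the asymptotic Gaussianity of $M_t/\sqrt{t}$ with the correct variance. Dividing the decomposition by $t$ times $\frac{1}{t}\int_0^t e^{-\beta A_s(Z_s)}\dd s$ (bounded from below by a positive constant uniformly in $t$) and taking the $L^2$ limit yields $t\mathbb E(|\nu_t(\varphi) - \mu(\varphi)|^2) \to \sigma_\infty^2(\varphi)$; the uniform $C/t$ bound then follows by combining this asymptotic behaviour for $t \geqslant T_0$ with the trivial bound $|\nu_t(\varphi) - \mu(\varphi)| \leqslant 2\|\varphi\|_\infty$ for $t \leqslant T_0$.

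\emph{Main obstacle.} The crux is to show that $R_t$ and $W_t$ contribute only $o(t)$ in $L^2$: the naive Cauchy--Schwarz estimate based on the uniform bound $\|\nabla(A_s - A_*)\|_\infty \leqslant C_0$ only gives $O(t^2)$, which is insufficient to match $\sigma_\infty^2(\varphi)$ in the limit. Obtaining the required $o(t)$ rate will likely require an averaging argument exploiting the piecewise-constant structure of $A_t$: conditionally on $A_{t_{k-1}}$, the process $X_s$ on each interval $[t_{k-1}, t_k)$ is Markov and close to its instantaneous equilibrium $\mu_{V-A_{t_{k-1}}\circ\xi,\beta}$, and a Poisson equation for this conditional dynamics should allow one to represent the contribution of each interval to $R_t$ as a martingale increment, with cancellations coming from the fact that $A_*$ is the fixed point of the bias-update map.
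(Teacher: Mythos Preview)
Your decomposition has a genuine gap, and the obstacle you identify in Step~3 is precisely the place where your approach diverges from the paper's and becomes substantially harder.

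The paper does \emph{not} work with a fixed Poisson solution $\psi=R_{A_*}\varphi$. Instead it uses the \emph{time-dependent} solution $\tilde\varphi_s := R_{A_s}\varphi$, where $R_S$ inverts the generator $L_S$ of the time-changed process (Lemma~\ref{Lem:estimPoisson}). Applying It\^o's formula on each interval $[t_{k-1},t_k)$ where $A_s\equiv A_{t_{k-1}}$ is constant and summing gives
\[
\tau(t)\bigl(\nu_t(\varphi)-\mu(\varphi)\bigr)=\tilde\varphi_t(X_t)-\tilde\varphi_0(X_0)+\sum_{0<t_k\leqslant t}\bigl(\tilde\varphi_{t_{k-1}}(X_{t_k})-\tilde\varphi_{t_k}(X_{t_k})\bigr)-\sqrt{\tfrac{2}{\beta}}\int_0^t\nabla\tilde\varphi_s(X_s)\,\dd B_s.
\]
The point is that the ``remainder'' is now a telescoping sum of jumps of the Poisson solution at update times, and Lemma~\ref{LemQAkQAr} gives $\|R_{A_{t_k}}\varphi-R_{A_{t_{k-1}}}\varphi\|_\infty\leqslant C_5\|\varphi\|_\infty/k$. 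Summing yields an \emph{almost sure} bound $C\ln(1+t)\|\varphi\|_\infty/t$ on the non-martingale part after dividing by $\tau(t)\geqslant t/C$. This immediately gives the uniform $C/t$ bound via It\^o isometry, with a constant independent of $\varphi$. Your argument for the $C/t$ bound (``asymptotic behaviour for $t\geqslant T_0$ plus trivial bound for $t\leqslant T_0$'') fails here: the $T_0$ at which $t\mathbb E(\cdot)$ is close to $\sigma_\infty^2(\varphi)$ depends on $\varphi$, so the resulting constant is not uniform.

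For the identification of the limit, the paper replaces $\nabla\tilde\varphi_s$ by $\nabla R_{A_*}\varphi$ \emph{only inside the quadratic variation integral}, using the Lipschitz estimate $\|\nabla R_{S_1}\varphi-\nabla R_{S_2}\varphi\|_\infty\leqslant C_4\|\nabla S_1-\nabla S_2\|_\infty\|\varphi\|_\infty$ of Lemma~\ref{LemQBQbprime}. Since this is a Ces\`aro average, the qualitative convergence $\|\nabla A_s-\nabla A_*\|_\infty\to 0$ (no rate needed) suffices. In your setup, by contrast, the errors $R_t,W_t$ sit \emph{outside} any averaging and you would need a quantitative rate on $\int_0^t\|\nabla(A_s-A_*)\|_\infty\,\dd s$, which Theorem~\ref{TheoremTempsLong} does not provide. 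The averaging/fixed-point mechanism you sketch at the end is unnecessary once you switch to the time-dependent Poisson solution.
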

As detailed in the proof of Theorem~\ref{Theorem:variance_asymptotique} (in Section~\ref{SectionCVTempslong}), the asymptotic variance is given as follows:
\[\sigma_\infty^2 (\varphi) \ :=\ \frac{2 }{\beta  } \int_{\T^D} e^{\beta A_*\circ \xi } \left| \na   \psi \right|^2 \dd \mu \int_{\T^D}  e^{\beta A_*\circ \xi } \dd \mu \]
  where $\psi$ solves the Poisson equation
  \[\po \frac1\beta \Delta  - \na \po V- A_*\circ\xi \pf \na\pf \psi \ = \ e^{-\beta A_*\circ\xi}\po \varphi - \int_{\T^D}\varphi \dd \mu\pf \,.\]
  
 The consequences of Theorem~\ref{Theorem:variance_asymptotique} in term of efficiency of the algorithm are discussed in Section~\ref{Subsc-discussResult}.
 }

\subsection{Tensor approximation}\label{SubsectionTensor}

This section focuses on the minimization step of Algorithm~\ref{algo:ABF}. Assumption~\ref{Hypo} is enforced. Fix $\nu\in\mathcal P(\T^p \times \T^d)$. For all $f\in H$, the cost function $\J_{\nu}(f)$ defined by \eqref{EqJ1} is equal to
\begin{eqnarray}\label{Eq:LienJ-Jtilde}
\J_{\nu}(f)& =& C_{\nu} + (1+\lambda) \int_{\T^d} | F_{\nu}(z) -\na f(z)|^2 \theta_{\nu}(z)\dd z
\end{eqnarray}
with some constant $C_{\nu}$ independent from $f$ and where, for all $z\in \T^d$,
\begin{eqnarray}
\theta_{\nu}(z) &:= &\frac1{\lambda+1} \po \lambda + \int_{\T^p\times \T^d} K(y,z)\dd \nu( q,y)\pf\\
F_{\nu}(z)&: =& \frac{1}{(\lambda+1)\theta_{\nu}(z)}\int_{\T^p\times \T^d}\na_y V(q,y)  K(y,z)\dd \nu(q, y)\,. 
\end{eqnarray}
Note that, under Assumption \ref{Hypo}, $\theta_{\nu}$ is the density of a probability measure, bounded from below  by $(\lambda + \min K)/(1+\lambda)>0$. Moreover, since $K$ is smooth and bounded, so are $\theta_{\nu}$ and $F_{\nu}$. Note that neither the additive constant $C_{\nu}$ nor the multiplication by $1+\lambda$ affect 
the problem of minimizing $\J_{\nu}$. As a consequence, the unique minimizer $f_*$ of $\J_{\nu}$ on $H$ (see Proposition \ref{prop:minimJt}) is equivalently the unique minimizer of 
\begin{equation}\label{eq:defJtilde}
H \ni f \ \mapsto\ \widetilde {\mathcal J}_{\nu}(f) \ :=\  \int_{\T^d} | F_{\nu}(z) -\na f(z)|^2 \theta_{\nu}(z)\dd z.
\end{equation}
The gradient of $\widetilde \J_{\nu}$ at $f_*$ is the Helmholtz projection in $L^2(\theta_{\nu})$ of $F_{\nu}$. The Euler-Lagrange equation associated to the minimization problem of $\widetilde \J_\nu$ over $H$ is
\begin{eqnarray}\label{Eq-Euler-Lagrange}
\na\cdot\po \theta_{\nu} \po\na f_*-F_{\nu}\pf\pf & = & 0\,,
\end{eqnarray}
where $\na\cdot$ denotes the divergence operator. When $d$ is small ($d=2$ in \cite{LelievreAlrachid}), as $t$ increases, the functions $\theta_{\nu_t}$ and  $\theta_{\nu_t}F_{\nu_t}$ are updated and kept in memory on a discrete grid of dimension $M^d$ for some $M\in \mathbb{N}^*$, 
and the Euler equation is solved with standard PDE techniques. However this is not sustainable if one wants to consider a larger number of reaction coordinates. For this reason, we now present a method to approximate $f_*$ 
 by a sum of tensor products, namely by a function $f_m\in H$ which reads as follows
\[\forall z:= (z_1,\cdots, z_d)\in \T^d, \quad f_m(z) \ = \ \sum_{k=1}^m \prod_{j=1}^d r_{k,j}(z_j)\]
for some $m\in\N_*$ and some functions $r_{k,j}: \T \to \mathbb{R}$ for $1\leq j \leq d$ and $1\leq k \leq m$. See \cite{Grasedyck} for a general overview on tensor methods.

Let $g$ be a simple tensor product function, i.e. a function such that for all $z=(z_1,\cdots, z_d)\in\T^d$, $g(z) = \prod_{j=1}^d r_j(z_j)$ for some $r_1,\cdots, r_d\in H^1(\T)$. Such a simple tensor product function will be denoted herefater by 
$g = \bigotimes_{j=1}^d r_j$. 

If $g$ belongs to $H$, its (Lebesgue) integral vanishes, which is equivalent to the fact there exists $i\in\cco 1,d\ccf$ such that the (Lebesgue) integral of $r_i$ vanishes. This motivates the introduction of the following subspaces of $H$: for $i\in\cco 1,d\ccf$,   define
\begin{eqnarray*}
\Sigma_i &:= & \left\{g\in H,\ g=\bigotimes_{j=1}^d r_j\text{ with }r_j \in H^1(\T)\text{ for all $j\in\cco 1,d\ccf$ and }\int_{\T} r_i(z_i)\dd z_i =0\right \}.
\end{eqnarray*}
\begin{prop}\label{prop:minJtSigmai}
Under Assumption \ref{Hypo}, for all $\nu\in\mathcal P(\T^p \times \T^d)$, $i\in\cco 1,d\ccf$ and $f\in H$, there always exists at least one minimizer in $\Sigma_i$ to the optimization problem 
\begin{equation}\label{Eq:MinimSigma_i}
\mathop{\min}_{g\in \Sigma_i} \mathcal{J}_\nu(f + g).
\end{equation}
\end{prop}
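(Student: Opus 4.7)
The plan is to apply the direct method of the calculus of variations: exhibit a minimizing sequence, extract a compact subsequence whose limit lies in $\Sigma_i$, and use lower semicontinuity. The starting point is the identity \eqref{Eq:LienJ-Jtilde}, which expresses $\J_\nu(f+g)$ (up to an additive constant and a positive multiplicative constant) as $\int_{\T^d}|F_\nu - \nabla f - \nabla g|^2 \theta_\nu \dd z$. Since $\theta_\nu$ is bounded above and below by positive constants under Assumption~\ref{Hypo}, this functional is a strictly convex, coercive quadratic form in $\nabla g$, so it is weakly lower semicontinuous on $H^1(\T^d)$.

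Let $(g_n)_{n \in \N}$ be a minimizing sequence in $\Sigma_i$, with $g_n = \bigotimes_{j=1}^d r_{n,j}$ and $\int_{\T} r_{n,i} = 0$. Coercivity and boundedness of $\J_\nu(f+g_n)$ give a uniform bound on $\|\nabla g_n\|_{L^2(\T^d)}$, hence (by Poincaré on $H$) on $\|g_n\|_{H^1(\T^d)}$. If $g_n$ vanishes infinitely often, the candidate $g_* = 0$ belongs to $\Sigma_i$ and solves \eqref{Eq:MinimSigma_i}; otherwise, using the gauge invariance $\bigotimes r_j = \bigotimes(\alpha_j r_j)$ whenever $\prod \alpha_j = 1$, rescale so that $\|r_{n,j}\|_{L^2(\T)} = 1$ for every $j \neq i$. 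Then $\|g_n\|_{L^2(\T^d)} = \|r_{n,i}\|_{L^2(\T)}$.

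Now split according to the behaviour of $\|r_{n,i}\|_{L^2(\T)}$. If it tends to $0$ along a subsequence, then $g_n \to 0$ in $L^2$, and weak lower semicontinuity yields $\J_\nu(f) \leqslant \liminf_n \J_\nu(f+g_n) = \inf_{g\in\Sigma_i} \J_\nu(f+g)$, so $g_* = 0 \in \Sigma_i$ is a minimizer. Otherwise, pass to a subsequence with $\|r_{n,i}\|_{L^2} \geqslant c > 0$. Using the factorization $\|\partial_{z_j} g_n\|_{L^2(\T^d)}^2 = \|r_{n,j}'\|_{L^2(\T)}^2 \prod_{k\neq j}\|r_{n,k}\|_{L^2(\T)}^2$, one obtains $\|r_{n,j}'\|_{L^2(\T)}^2 \leqslant c^{-2} \|\nabla g_n\|_{L^2(\T^d)}^2$ for $j \neq i$, and $\|r_{n,i}'\|_{L^2(\T)} = \|\partial_{z_i} g_n\|_{L^2(\T^d)}$ is also bounded. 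Combined with $\|r_{n,j}\|_{L^2(\T)} = 1$ for $j \neq i$ and the zero-mean Poincaré bound for $r_{n,i}$, each factor is bounded in $H^1(\T)$.

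The key dimension-one compact embedding $H^1(\T) \hookrightarrow \mathcal C(\T)$ then gives, up to a subsequence, uniform convergence $r_{n,j} \to r_{*,j}$ for each $j$. In particular $r_{*,j} \in H^1(\T)$, we have $\|r_{*,j}\|_{L^2(\T)} = 1$ for $j \neq i$ and $\int_\T r_{*,i} = 0$; and $g_n \to g_* := \bigotimes_{j=1}^d r_{*,j}$ uniformly on $\T^d$, hence strongly in $L^2(\T^d)$ and (after a further extraction of the bounded weak-$H^1$ subsequence) weakly in $H^1(\T^d)$. Thus $g_* \in \Sigma_i$, and weak lower semicontinuity gives $\J_\nu(f+g_*) \leqslant \liminf_n \J_\nu(f+g_n) = \inf_{g\in\Sigma_i}\J_\nu(f+g)$, so $g_*$ is a minimizer. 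The main obstacle is precisely that $\Sigma_i$ is \emph{not} convex, so that a priori a weak $H^1$ limit of rank-one tensors need not be a rank-one tensor; the $L^2$ normalization of the factors $r_{n,j}$ ($j \neq i$) combined with the one-dimensional compact embedding $H^1(\T) \hookrightarrow \mathcal C(\T)$ is what allows us to pass to the limit factor by factor and recover a tensor-product structure.
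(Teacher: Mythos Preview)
Your argument is correct. The overall strategy (direct method, normalize all factors but the $i$-th, pass to the limit factor by factor) matches the paper, but the mechanism for identifying the limit as a rank-one tensor is genuinely different. The paper only secures weak $L^2$ convergence of the factors $r_j^{(l)}$ for $j\neq i$ (and weak $H^1$ for $j=i$) and then invokes an external lemma (\cite[Lemma 2]{LBLM}) to conclude that the tensor product converges in the sense of distributions to $\bigotimes_j r_j^*$; only afterwards does it recover $r_j^*\in H^1(\T)$ from $g^*\in H$. You instead exploit the lower bound $\|r_{n,i}\|_{L^2}\geqslant c>0$ to bound \emph{every} factor in $H^1(\T)$, and then the one-dimensional compact embedding $H^1(\T)\hookrightarrow \mathcal C(\T)$ gives uniform convergence of each factor, hence of the product. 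This is more elementary and self-contained, at the price of the case split on whether $\|r_{n,i}\|_{L^2}\to 0$; the paper avoids that split by assuming at the outset that $0$ is not a minimizer, which forces $g^*\neq 0$ and hence all limit factors to be nonzero. Two small points worth tightening: the claim $r_{*,j}\in H^1(\T)$ does not follow from uniform convergence alone but from the fact that $(r_{n,j})$ is bounded in $H^1(\T)$ (so the weak $H^1$ limit coincides with the uniform limit); and in the degenerate case you should say explicitly that $g_n\rightharpoonup 0$ weakly in $H^1$ (from $L^2$ convergence plus $H^1$ boundedness) before invoking weak lower semicontinuity.
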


\nv{This is proven in Section~\ref{SectionTensor}.} From Proposition \ref{prop:minJtSigmai}, the greedy algorithm described in Algorithm~\ref{algo:tenseur} below is well-defined.
\begin{algorithm}[h!]
\caption{Greedy($\nu,f_0,m$)}\label{algo:tenseur}
\begin{algorithmic}[1]

\BState \textbf{Input:}

\State Probability measure $\nu\in\mathcal P(\T^p \times \T^d)$
\State Initial guess $f_0\in H$
\State number of tensor terms  $m\in\N_*$

\medskip
\BState \textbf{Output:}
\State $f_m\in H$.

\medskip
\BState \textbf{Begin:}

\State $n=0$

\While{$n<m$}
\For{$i\in\cco 1,d\ccf$}

\State Find  $g_{n}:= \bigotimes_{j=1}^d r_{n,j}$ a minimizer of $g \mapsto \J_\nu\po f_n+g\pf$ over $g \in \Sigma_i$ (i.e. with $r_{n,j}\in H^1(\T)$ for all $1\leq j \leq d$ and $\int_\T r_{n,i} = 0$)
\State Set $f_{n+1}  =f_n  + g_{n}$ . 
\State Increment $n \leftarrow n+1$.

\EndFor \State \textbf{end for}
\EndWhile \State \textbf{end while}
 
  \medskip
\State \textbf{Return} $f_m$.
\end{algorithmic}
\end{algorithm}



In Section \ref{SectionTensor} is established the following:
\begin{thm}\label{ThmTens} Under Assumption \ref{Hypo}, let $f_*$ be the minimizer of $\J_\nu$ in $H$ and $f_m=\mathrm{Greedy} (\nu,f_0,m)$ as given by Algorithm \ref{algo:tenseur} for some $\nu\in\mathcal P(\T^p \times \T^d)$, $f_0\in H$ and $m\in\N$. Then
\[ \| f_m - f_*\|_{H^1} \ \underset{m\rightarrow+\infty}\longrightarrow \ 0\,.\]
\end{thm}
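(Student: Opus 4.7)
The plan is to recast the greedy step as a cyclic projection problem in a Hilbert space and adapt the convergence theory of Proper Generalized Decomposition to the tensor-product structure. Using \eqref{Eq:LienJ-Jtilde}, minimizing $\J_\nu$ over $H$ is equivalent to minimizing $\widetilde{\J}_\nu$, so that under Assumption~\ref{Hypo} (which ensures $\theta_\nu$ is bounded above and below by positive constants), the bilinear form $a(u,v):=\int_{\T^d}\nabla u\cdot\nabla v\,\theta_\nu$ defines an inner product on $H$ whose norm $\|\cdot\|_a$ is equivalent to $\|\cdot\|_{H^1}$ (by Poincaré on $\T^d$ for mean-zero functions). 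The algorithm then amounts to constructing iterates $f_n$ and residuals $e_n:=f_*-f_n$ such that at step $n$ with $i(n)=1+((n-1)\bmod d)$, the increment $g_n\in\Sigma_{i(n)}$ minimizes $\|e_n-g\|_a^2$.

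Because each $\Sigma_i$ is invariant under the scalings $g\mapsto tg$, the first-order optimality at $t=1$ gives the Galerkin identity $a(e_n-g_n,g_n)=0$, hence the Pythagorean decay
\[\|e_{n+1}\|_a^2 \ =\ \|e_n\|_a^2-\|g_n\|_a^2.\]
Summing over $n$ yields $\sum_{n\geq 0}\|g_n\|_a^2\leq\|e_0\|_a^2<+\infty$, and in particular $\|g_n\|_a\to 0$. Moreover, for any $h\in\Sigma_{i(n)}$ and any $t\in\R$, the minimality of $g_n$ gives $\|e_n-th\|_a^2\geq\|e_{n+1}\|_a^2$; requiring the resulting non-negative quadratic in $t$ to have non-positive discriminant yields
\[|a(e_n,h)|\ \leq\ \|h\|_a\,\|g_n\|_a\qquad\forall h\in\Sigma_{i(n)}.\]
The deterministic cycling together with the telescoping bound $\|e_n-e_{n+d}\|_a\leq\sum_{k=n}^{n+d-1}\|g_k\|_a\to 0$ then upgrades this to $a(e_n,h)\to 0$ for every $h\in\bigcup_{i=1}^d\Sigma_i$. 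The density of $\mathrm{span}\bigl(\bigcup_{i=1}^d\Sigma_i\bigr)$ in $H$ for the $H^1$-norm (which I verify via a Fourier series expansion on $\T^d$, writing each trigonometric mode as a finite sum of tensor products one of whose factors has zero mean) combined with continuity of $a(\cdot,\cdot)$ yields the weak convergence $e_n\rightharpoonup 0$ in $H$.

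The main obstacle is to upgrade this weak convergence to strong convergence in $H^1$: high-frequency oscillations could in principle sustain $\|e_n\|_a$ bounded below even against $e_n\rightharpoonup 0$. This is where the tensor structure becomes essential. The key additional ingredient is the weak sequential closedness of each cone $\Sigma_i$ in $H^1(\T^d)$: for a bounded sequence $g_n=\bigotimes_j r_{n,j}\in\Sigma_i$, after renormalizing so that $\|r_{n,j}\|_{L^2(\T)}=1$ for $j\neq i$, the bound on $\|g_n\|_{H^1(\T^d)}$ controls each $r_{n,j}$ in $H^1(\T)$, and the compact one-dimensional Sobolev embedding $H^1(\T)\hookrightarrow C(\T)$ extracts uniform limits of each factor whose product lies in $\Sigma_i$ (with $g_\infty=0$ in the degenerate case $\|r_{n,i}\|_{L^2}\to 0$). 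Given this closedness, one argues by contradiction in the spirit of the Proper Generalized Decomposition literature: if $\|e_n\|_a$ did not tend to zero, one could extract from the normalized greedy directions $\hat g_n=g_n/\|g_n\|_a$ a unit element $\hat g_\infty\in\Sigma_i$ for some $i$ visited infinitely often, against which $a(e_n,\hat g_\infty)$ would remain bounded below, contradicting $|a(e_n,\hat g_\infty)|\leq\|g_n\|_a\to 0$. Carefully carrying out this extraction, and interleaving it with the cyclic minimization scheme of Algorithm~\ref{algo:tenseur}, is the main technical work to be done.
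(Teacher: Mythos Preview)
Your setup, the Pythagorean identity $\|e_{n+1}\|_a^2=\|e_n\|_a^2-\|g_n\|_a^2$, and the discriminant bound $|a(e_n,h)|\le\|h\|_a\|g_n\|_a$ for $h\in\Sigma_{i(n)}$ are all correct, and together with the density argument they do yield weak convergence $e_n\rightharpoonup 0$ in $H$. This part is essentially the same as the paper's, with your discriminant argument replacing the paper's Lemma~\ref{LemControlNorme} (and giving a cleaner constant).

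The gap is in the strong-convergence step. Your proposed contradiction cannot close: you have already established $a(e_n,h)\to 0$ for \emph{every} fixed $h\in H$, so in particular $a(e_n,\hat g_\infty)\to 0$ for any putative limit $\hat g_\infty$ extracted from the normalized directions $\hat g_n$. There is no mechanism by which ``$a(e_n,\hat g_\infty)$ would remain bounded below''; the only relation available is $a(e_n,\hat g_n)=\|g_n\|_a\to 0$, and weak closedness of $\Sigma_i$ does not prevent $\|\hat g_\infty\|_a<1$ or $\hat g_\infty=0$. The weak closedness of $\Sigma_i$ is indeed used in the paper, but only to guarantee existence of the greedy minimizer (Proposition~\ref{prop:minJtSigmai}), not to upgrade weak to strong convergence.

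The paper's route to strong convergence is different and exploits the explicit decomposition $f_n=f_0+\sum_{k<n}g_k$. One writes
\[
\|e_n\|_a^2=\langle e_n,f_*\rangle-\langle e_n,f_0\rangle-\sum_{k=0}^{n-1}\langle e_n,g_k\rangle,
\]
bounds each $|\langle e_n,g_k\rangle|$ by $\|g_k\|_a\sum_{j=n}^{n+d-1}\|g_j\|_a$ via (a version of) your discriminant inequality and cycling, and obtains $|\langle e_n,f_n-f_0\rangle|\le C\sqrt{n\,a_n}$ with $a_n=\sum_{j=n}^{n+d-1}\|g_j\|_a^2$. Since $\sum_n a_n<\infty$, a subsequence satisfies $n_ka_{n_k}\to 0$, hence $\|e_{n_k}\|_a\to 0$; monotonicity of $\|e_n\|_a$ then gives convergence of the full sequence. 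This is the missing idea: the strong limit is obtained by testing $e_n$ against $f_n$ itself, decomposed into the greedy increments, rather than against a compactness-extracted direction.
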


The interest of Algorithm \ref{algo:tenseur} is that at each iteration, one only has
to compute $d$ one-dimensional functions, which makes it possible to implement even if $d$
is relatively large (say $4< d < 10$). Notice that the price to pay when going from the original
problem of minimizing $\mathcal J_\nu$ over $H$ to the problem \eqref{Eq:MinimSigma_i} is that the Euler-Lagrange equations associated to the initial problem are
linear (since $\mathcal J_\nu$ is a quadratic functional) whereas the Euler-Lagrange equations
associated to \eqref{Eq:MinimSigma_i} are nonlinear. This is due to the fact that the quadratic functional is
minimized over a {\em non-linear} space in \eqref{Eq:MinimSigma_i}.

In practice, \nv{a minimizer of } $\mathcal J_{\nu}\po  f+ g\pf$ over $g = \bigotimes_{j=1}^d r_j\in \Sigma_i$  is \nv{approximated}   through the Alternating Least Square  method \cite{ALS}, 
which is a fixed point procedure on the Euler-Lagrange equation \eqref{Eq-Euler-Lagrange}: the $r_j$'s are optimized one after the other, the others being fixed, repeatedly. This amounts to solving a system of one-dimensional elliptic PDEs of the form
\begin{eqnarray}\label{eq:EDPrk}
\partial_{z_j} \po a_j \partial_{z_j} r_j\pf (z_j) - b_j(z_j) r_j(z_j) = c_j(z_j)  
\end{eqnarray}
with
\begin{eqnarray*}
a_j(z_j) & = & \int_{\T^{d-1}} \po \prod_{l\neq j} r_{l}(z_{l})\pf^2 \theta_\nu(z) \dd z_{\neq j}\\
b_j(z_j) &  =& \sum_{h\neq j} \int_{\T^{d-1}} \left|\partial_{z_h} \prod_{l\neq j} r_l(z_l)\right|^2 \theta_\nu(z) \dd z_{\neq j}\\
c_j(z_j) &= & \int_{\T^{d-1}} \po \prod_{l\neq j} r_l(z_l)\pf \partial_{z_j} \po F_{\nu,j}  \theta_\nu\pf (z)\dd z_{\neq j} - \sum_{h \neq j} \int_{\T^{d-1}}\partial_{z_h} \po \prod_{l\neq j} r_l(z_l) \pf  F_{\nu,h}(z) \theta_\nu(z) \dd z_{\neq j}\,,
\end{eqnarray*}
where $\dd z_{\neq j}$ means that all variables except the $j^{th}$ are integrated and $F_{\nu,j}$ denotes the $j^{th}$ component of $F_\nu$.
If for all $y=(y_1,\cdots, y_d),z= (z_1,\cdots,z_d)\in\T^d$, $K(y,z)=\Pi_{i=1}^d K_i(y_i,z_i)$ for some functions $K_i: \T \times \T \to \mathbb{R}$ for all $1\leq i \leq d$ (like the kernel \eqref{VonMises}), 
for $\nu = \nu_t$ given by \eqref{Eq-Empiric-Distrib},
\[(1+\lambda) a_j(z_j) \ = \ \lambda \prod_{l\neq j} \|r_l\|_{L^2(\T)}^2 + \int_{\T^D} \po \prod_{l\neq j} \int_{\T} r_l^2(z_l) K_l(y_l,z_l) \dd z_l \pf K_j(y_j,z_j)\dd \nu_t(q,y)\,,\]
which can be computed without computing $F_{\nu_t}(z)$ and $\theta_{\nu_t}(z)$ for all $z\in\T^d$ (which would be impossible in practice). The same holds for $b_j$ and $c_j$.

\subsection{The tensor ABF algorithm}\label{Sec:defTABF}

As already explained above, the main
objective of this work is to introduce a new
algorithm to adapt the standard ABF
approach to multi-dimensional reaction
coordinates. Combining Algorithms \ref{algo:ABF} and \ref{algo:tenseur}, the Tensor ABF (TABF) algorithm is  described in 
Algorithm~\ref{algo:TABF} below.  Note that, for the sake of clarity, it has been kept relatively simple. In particular,
we haven't addressed here the question of time and space discretization.

 Moreover, the proofs of
convergence of Algorithm~\ref{algo:ABF} and Algorithm~\ref{algo:tenseur}
 also have their own interest. The
convergence of Algorithm~\ref{algo:ABF} is based on the
so-called ordinary differential equation
method~\cite{BenaimLedouxRaimond}, and requires specific
contractivity bounds. The convergence of
Algorithm~\ref{algo:tenseur} is an adaptation of the proof of
convergence of greedy algorithms~\cite{CancesEhrlacherLelievre2011},
the main difficulty being to deal with the
zero average constraint in $H$.

\begin{algorithm}[h!]
\caption{TABF algorithm}\label{algo:TABF}
\begin{algorithmic}[1]

\BState \textbf{Input:}

\State Initial condition $(q_0,z_0) \in \T^p\times\T^d $
\State Brownian motion $(B_t^1,B_t^2)_{t\geqslant 0}$ on $\T^p\times\T^d$
\State Regularization parameters $K$, $\lambda$
\State Update period $T_{up}>0$, number of updates $N_{up}\in\N_*$, total simulation time $T_{tot} = T_{up} N_{up}$
\State Number of tensor terms by update  $m\in\N_*$

\medskip
\BState \textbf{Output:}
\State Estimated free energy $A_{T_{tot}} \in H$
\State Trajectory $(Q_t,Z_t)_{t\in[0,T_{tot}]}\in \mathcal C\po [0,T_{tot}],\T^p\times\T^d\pf$

\medskip
\BState \textbf{Begin:}

\State Set $(Q_0,Z_0)=(q_0,z_0)$.
\State Set $A_0(z) = 0$ for all $z\in\T^d$.
\State Set $t_k = kT_{up}$ for all $k\in\cco 0,N_{up}\ccf$.

\For{$k\in\cco 1,N_{up}\ccf$}

\State Set $A_t = A_{t_{k-1}}$ for all $t\in[t_{k-1},t_k)$.
\State Set  $(Q_t,Z_t)_{t\in[t_{k-1},t_{k}]}$ to be the solution of \eqref{EqEABF} with value $(Q_{t_{k-1}},Z_{t_{k-1}})$ at time $t_{k-1}$.
\State Set $f_m=\mathrm{Greedy}(\nu_{t_k},A_{t_{k-1}},m)$ given by Algorithm \ref{algo:tenseur} where $\nu_{t_k}$ is given by \eqref{Eq-Empiric-Distrib}.
\State Set $A_{t_k} = f_m$.
\EndFor
 
 \medskip
\State \textbf{Return} $A_{T_{tot}}$ and $(Q_t,Z_t)_{t\in[0,T_{tot}]}$.
\end{algorithmic}
\end{algorithm}

The \nv{rest of the} paper is organized as follows. \nv{Our results are discussed in Section~\ref{Subsc-discussResult}.} Section~\ref{SectionCVTempslong} is devoted to the proof of Theorems~\ref{TheoremTempsLong} \nv{and \ref{Theorem:variance_asymptotique}}. In Section~\ref{SectionTensor}, we prove Theorem~\ref{ThmTens}. Section~\ref{SubSectionDiscussion} provides a detailed 
discussion on practical considerations and possible variations of the algorithm. Finally, some  numerical experiments with the TABF algorithm are  reported in Section \ref{SectionNumerique}.


%


\subsection{\nv{Discussion on the results and efficiency}}\label{Subsc-discussResult}

\nv{First, notice that our theoretical results, Theorems~\ref{TheoremTempsLong} and \ref{ThmTens}, do not provide a full proof of convergence of the algorithm implemented in practice. Indeed, the long-time convergence is proven only in the case where the problem of minimizing $\mathcal J_{\nu}$ is exactly solved, which is not the case in Algorithm~\ref{algo:TABF}. Similarly, the convergence of the greedy tensor algorithm is proven only in the case where the problem of minimizing $\mathcal J_{\nu}(f+g)$ over single tensor terms $g = \bigotimes_{j=1}^d r_j$  is exactly solved, which is in fact not the case with the Alternating Least Square  method (see  \cite{ALS,uschmajew2012local} for convergence results for this algorihm). Besides, as already mentioned, time and space discretization errors also introduce biases in practice.  Finally,  as discussed in Section~\ref{SubSectionDiscussion}, an efficient implementation of the TABF algorithm would in fact require many other ingredients than the simple Algorithm~\ref{algo:TABF}. The present paper does not claim to fill in one leap the whole gap between theory and practice. Nevertheless, both Theorems~\ref{TheoremTempsLong} and \ref{ThmTens} are already new and non trivial results and they prove the consistency of our algorithm in some limiting
regime (perfect minimizations and negligible time and space discretization errors).


\bigskip


As Algorithm~\ref{algo:ABF} is meant to tackle metastability issues, a  natural frame to discuss its efficiency is the low temperature regime $\beta\rightarrow +\infty$. For Markov processes, obtaining an equivalent in this regime of the convergence  rate  of the law of the process toward its equilibrium  is a classical topic, but 
the case of non-Markovian self-interacting dynamics or similar stochastic algorithms is known to be much more difficult, and there are much less results. 
 Theorem~\ref{Theorem:variance_asymptotique} states that, in term of asymptotic variance, the efficiency of the adaptive scheme is approximately (as $A_*$ is close to $A$) the same as the efficiency of the importance sampling scheme based on \eqref{eq:YbiaisA}, which brings  back to the question already
discussed above of why to choose the latter as a target. 
One way to quantify the  interest of using \eqref{eq:YbiaisA} is to discuss the spectral gap of the associated infinitesimal generator: it is indeed known  that the larger the spectral  gap, the smaller the asymptotic variance, and the  quicker the convergence to equilibrium.
 In fact, at low temperature, the spectral gap of the overdamped Langevin process  is well known to scale as $\exp(-\beta c_*)$ where $c_*$ is the so-called critical depth of the potential, see \cite{Holley}. On the other hand, applying the results of \cite{LelievreTwoscale}, we see that the spectral gap of \eqref{eq:YbiaisA} can be obtained from the Poincar\'e inequality satisfied by the marginal law of the reaction coordinates on the one hand and by the conditional laws for fixed values of the reaction coordinates. The marginal law being uniform on the torus for all $\beta$, the scaling in $\beta$ of the spectral gap of \eqref{eq:YbiaisA} is given by the scaling of the Poincar\'e inequality of the conditional laws, i.e. only the ``orthogonal" metastability intervenes. The spectral gap of \eqref{eq:YbiaisA}  then scales at most as $\exp(-\beta \sup_{z\in\T^d}c_*(z))$ where $c_*(z)$ is the critical depth of $q\mapsto V(q,z)$. 
This gives a precise criterion (although difficult to use in practice) for selecting reaction coordinates: a reaction coordinate is good if $\sup_{z\in\T^d} c_*(z) < c_*$. In the toy problem studied in Section~\ref{subsec:toymodel}, for instance, $\sup_{z\in\T^d} c_*(z) = 0$ (there is no orthogonal metastability). A comparison of a classical overdamped Langevin sampler and of an ABF algorithm in this case at low temperature is given in Figure~\ref{Figure_histobet5}. We can see that, in the same physical time,  the TABF process   successfully  visits the whole space, while the classical  sampler   remains  trapped in its initial well. Of course this is not a fair comparison of the practical algorithms since the numerical cost of the adaptive algorithm is higher, but it illustrates the difference of the sampling rates of the continuous-time processes.

Besides, notice that, although Theorems~\ref{TheoremTempsLong} and \ref{Theorem:variance_asymptotique} are the first theoretical proof of the consistency of the self-interacting ABF method, this algorithm has proven to be useful and efficient for nearly 20 years  in a large number of empirical studies, see e.g. \cite{DarvePorohill,ChipotHenin,ChipotEABF} and references within.

\bigskip

Let us now discuss the efficiency of Algorithm~\ref{algo:tenseur}. A natural question that arises when it comes to tensor approximation methods is the rate of convergence of the obtained approximation to $V_{bias,t}$  as a function of  the number of tensor terms. Indeed, Theorem~\ref{ThmTens} does not provide an answer to this issue  since, a priori, the number $m$ of tensor terms required to get a correct approximation of the free energy may grow exponentially with $d$, in which case there would be no gain in using tensor formats rather than a $d$-dimensional grid. 
From a theoretical point of view, algebraic rates of convergence of greedy algorithms are proved in~\cite{temlyakov2008greedy} (see in particular \cite[Theorem 2.3.5]{temlyakov2008greedy}) under relatively mild assumptions on the regularity of the function to be approximated, 
however these rates of convergence are often observed in practice to be quite pessimistic with respect to actual rates of convergence.
For these types of algorithms, it is observed that, for elliptic problems, the rate of convergence of a tensor approximation of the solution with respect to the number of tensor terms is similar to the rate of convergence of the tensor approximation for the data of the problem. This intiuition has been rigorously proved in~\cite{dahmen2016tensor} 
in the case of a standard Laplace problem. 

\medskip

However, let us emphasise that, in fact, the convergence of $A_t$ toward $A$ (or toward $A_*$ close to $A$) is not crucial in the algorithm, 
since  the objective is to estimate the expectation of some observables. Indeed, replacing $\nabla A_t$ by any other (slowly varying) biasing force would not 
change the almost sure weak convergence of $\nu_t$ toward $\mu$ (in fact the proof of this part of Theorem~\ref{TheoremTempsLong} works as 
long as the space derivatives of $A_t$ are uniformly bounded in time, and  $\|\na A_{t_{k+1}} - \na A_{t_k}\|_{\infty}$ scales as $1/k$). As motivated 
in the introduction, the bias is chosen to target $\na A$ because the latter is a good bias (at least for well-chosen reaction coordinates). However,
as long as  the bias   helps the process  to cross some energy barriers, and thus to converge quicker to equilibrium, the fact that it is close to $\na A$ is not necessary to get 
the convergence of $\int_{\T^D} \varphi \nu_t$ toward $\int_{\T^D}\varphi \mu$. Among other consequences, it means that, 
in Algorithm~\ref{algo:TABF}, it is not necessary to chose $m$ large enough so that the convergence of the greedy tensor algorithm is achieved, any value yields a consistent algorithm.


Finally, we would like to highlight the fact that, due to the high-dimensionality of the problem, 
a standard ABF algorithm just \itshape cannot \normalfont 
be implemented in situations where the number of reaction coordinates is large, whereas the TABF method proposed here \itshape can \normalfont be used and yield significant improvements of the sampling properties of the Markov process, 
even in situations where the obtained approximation of the free energy is not very accurate. Note that, in Section~\ref{SectionNumerique}, a numerical experiment is provided where interesting non-trivial results are obtained when approximating a $5$-dimensional free energy with $m= 140$ 
tensor terms, each one-dimensional function being piecewise linear on a grid with $M=30$ points. So the total memory cost is $5m M$, orders of magnitude smaller than $M^5$.

}

\section{Proof of the long-time convergence}\label{SectionCVTempslong}

In the whole Section \ref{SectionCVTempslong} we consider the ABF process $(Q_t,Z_t,A_t)_{t\geqslant 0}$ obtaind through Algorithm~\ref{algo:ABF} (with $N_{up}=+\infty$), and Assumption \ref{Hypo} holds.

\begin{lem}\label{LemNaABorneInfini}
For all $r\in\N^*$ and all multi-index $\alpha\in \mathbb{N}^{\nv{r}}$, there exists a constant $C_\alpha>0$ such that, for all $t\geqslant 0$, $\| \partial^\alpha A_t\|_\infty \leqslant C_\alpha$.
\end{lem}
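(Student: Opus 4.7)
My plan is to exploit the fact that $A_{t_k}$ solves a uniformly elliptic divergence-form PDE on $\T^d$ whose coefficients and ellipticity constant admit bounds depending only on the fixed data $(V,K,\lambda)$, independently of the trajectory. Classical elliptic regularity on the torus will then deliver all desired $\mathcal C^r$ estimates.

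First, since $A_t$ is piecewise constant in time by construction (see Algorithm~\ref{algo:ABF}), it is enough to exhibit a constant $C_\alpha$ such that $\|\partial^\alpha A_{t_k}\|_\infty \leq C_\alpha$ for every $k\in\N$. Combining Proposition~\ref{prop:minimJt} with \eqref{Eq:LienJ-Jtilde}--\eqref{eq:defJtilde}, $A_{t_k}\in H$ is the unique solution of the Euler--Lagrange equation \eqref{Eq-Euler-Lagrange}, which I rewrite as
\[
-\na\cdot\po\theta_{\nu_{t_k}}\na A_{t_k}\pf \ = \ -\na\cdot\po\theta_{\nu_{t_k}}F_{\nu_{t_k}}\pf.
\]
From the explicit definitions of $\theta_\nu$ and $F_\nu$, for any multi-index $\alpha$,
\[
(\lambda+1)\partial^\alpha \theta_\nu(z) = \int \partial_z^\alpha K(y,z)\,\dd \nu(q,y),\quad (\lambda+1)\partial^\alpha(\theta_\nu F_\nu)(z) = \int \na_y V(q,y)\,\partial_z^\alpha K(y,z)\,\dd \nu(q,y),
\]
so that $\|\partial^\alpha \theta_\nu\|_\infty$ and $\|\partial^\alpha(\theta_\nu F_\nu)\|_\infty$ are controlled by constants depending only on $(V,K,\lambda)$, uniformly in $\nu\in\mathcal P(\T^p\times\T^d)$. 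Moreover, Assumption~\ref{Hypo} forces $\theta_\nu(z)\geq c_0 := (\lambda + \min K)/(\lambda+1) > 0$ uniformly in $\nu$.

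Then I would run a standard elliptic bootstrap on $\T^d$. Testing the PDE against $A_{t_k}$ itself, combined with the uniform ellipticity and the Poincaré--Wirtinger inequality on $H$ (recall $\int A_{t_k}=0$), yields $\|A_{t_k}\|_{H^1}\leq C_1$. Rewriting the equation in non-divergence form $\theta_\nu \Delta A_{t_k} = \na\cdot(\theta_\nu F_\nu)-\na\theta_\nu\cdot\na A_{t_k}$ and dividing by $\theta_\nu\geq c_0$, I obtain $\Delta A_{t_k}=G_k$ with $\|G_k\|_{L^2}$ controlled by $\|A_{t_k}\|_{H^1}$ and the uniform $\mathcal C^1$-bounds on the coefficients. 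The isomorphism $\Delta:H^{r+2}\cap H\to \{g\in H^r:\,\int g=0\}$ on the torus then promotes this to $\|A_{t_k}\|_{H^2}\leq C_2$; differentiating the equation and iterating yields $\|A_{t_k}\|_{H^r}\leq C_r$ for every $r\in\N$. Sobolev embedding $H^r(\T^d)\hookrightarrow \mathcal C^{s}(\T^d)$ (for $r$ large enough depending on $s$ and $d$) finally converts this into the required $\mathcal C^{|\alpha|}$ bound.

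The only delicate point, although essentially a matter of bookkeeping, will be to track carefully that every constant produced during the bootstrap depends only on $\|K\|_{\mathcal C^r}$, $\|\na V\|_\infty$, $\lambda$ and the Poincaré constant of $\T^d$, and in particular not on $k$ nor on the random realization of the trajectory. This is precisely what the uniform-in-$\nu$ smoothness and ellipticity bounds on $\theta_\nu$ and $\theta_\nu F_\nu$ are designed to guarantee.
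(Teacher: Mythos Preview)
Your proof is correct and follows essentially the same route as the paper: reduce to the update times, write the Euler--Lagrange equation $\na\cdot(\theta_{\nu_{t_k}}\na A_{t_k})=\na\cdot(\theta_{\nu_{t_k}}F_{\nu_{t_k}})$, observe that the coefficients have $\mathcal C^r$ bounds and ellipticity constant uniform in $\nu$, and bootstrap elliptic regularity followed by Sobolev embedding. The only cosmetic difference is that the paper runs the bootstrap via the energy method (differentiate the equation, multiply by $\partial^\alpha\na A_t$, integrate) rather than passing to non-divergence form and invoking the $\Delta$-isomorphism on the torus; both are standard and yield the same uniform $H^r$ bounds.
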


\begin{proof}
Since $\mathbb{R}_+ \ni t\mapsto A_t$ is piecewise constant, we may assume that $t=t_k = kT_{up}$ for some $k\in\N$ without loss of generality. Using the notation of Section~\ref{SubsectionTensor}, $A_t$ is then the minimizer over $H$ of 
$\widetilde{ \mathcal J}_{\nu_t}$ defined in (\ref{eq:defJtilde}). Recall that for all $f\in H$, 
\begin{eqnarray*}
 \widetilde{ \mathcal J}_{\nu_t} (f) & = & \int_{\T^d} | F_{\nu_t}(z) - \na f(z)|^2 \theta_{\nu_t}(z)\dd z.
\end{eqnarray*}
Remark that $\theta_{\nu_t}$ is bounded from below uniformly in $t$ and $z$ by $(\lambda + \min K)/(1+\lambda)>0$, 
and similarly all the derivatives in $z$ of $\theta_{\nu_t}$ and of $F_{\nu_t}$ are  bounded in $L^\infty(\T^d)$ by constants which depend on $K$ and $V$ but not on $t$. The Euler-Lagrange equation associated to the minimization of $\widetilde \J_{\nu_t}$ reads
\begin{eqnarray}\label{EqEulerJtilde}
\na\cdot \po \theta_{\nu_t} \na A_t \pf & = & \na \cdot \po \theta_{\nu_t} F_{\nu_t}\pf\,.
\end{eqnarray}
 By elliptic regularity (cf. \cite{AmbrosioCarlottoMassaccesi}), $A_t$ is thus $\mathcal C^\infty$ and, differentiating \eqref{EqEulerJtilde}, multiplying it by derivatives of $\na A_t$ and integrating, we classically get by induction that
\[ \int_{\T^d} |\partial^\alpha \na A_t |^2 \theta_{\nu_t} \ \leqslant \ C_\alpha \]
where $\alpha \in \mathbb{N}^{\nv{r}}$ is any multi-index \nv{for any $r\in\mathbb N_*$}, for some constant $C_\alpha>0$ which does not depend on $t$. Conclusion follows from Sobolev embeddings.
\end{proof}

Theorem \ref{TheoremTempsLong} will be a direct corollary of:
\begin{prop}\label{PropNutMu}
Almost surely, $\nu_t \underset{t\rightarrow \infty}{\overset{weak}{\longrightarrow}} \mu$.
\end{prop}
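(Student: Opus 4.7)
The plan is to apply the ODE method for self-interacting processes in the spirit of \cite{BenaimLedouxRaimond}, adapted to accommodate the fact that the bias $A_t$ is updated in a piecewise-constant fashion. By separability of $\mathcal C(\T^D)$, it suffices to prove $\nu_t(\varphi) \to \mu(\varphi)$ a.s.\ for a countable dense family of observables.

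First, I would control the variation of $A_t$ between updates. The unbiased occupation measure $\nu_{t_k}$ is averaged over $[0,t_k]$, so for large $k$, $\nu_{t_{k+1}}-\nu_{t_k}$ has total mass variation of order $T_{up}/t_{k+1}=O(1/k)$. Since $A_{t_k}$ is the minimizer over $H$ of the strongly convex quadratic $\widetilde{\mathcal J}_{\nu_{t_k}}$ whose coefficients $\theta_{\nu_{t_k}}$ and $F_{\nu_{t_k}}$ depend continuously on $\nu_{t_k}$, classical perturbation of the elliptic Euler-Lagrange equation \eqref{EqEulerJtilde} together with the a priori bounds of Lemma~\ref{LemNaABorneInfini} should yield $\|A_{t_{k+1}}-A_{t_k}\|_{\mathcal C^r(\T^d)}=O(1/k)$ for every $r\in\N$. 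This slow-variation estimate is the crucial input for the averaging principle.

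Second, I would establish a uniform ergodicity statement for the frozen biased Langevin dynamics. For any smooth $\bar A$ with $\|\bar A\|_{\mathcal C^2}$ controlled by a fixed constant (the situation afforded by Lemma~\ref{LemNaABorneInfini}), the overdamped Langevin generator with potential $V-\bar A\circ\xi$ on the compact torus $\T^D$ admits a spectral gap bounded below uniformly in $\bar A$, e.g.\ by a Holley–Stroock perturbation argument. This yields exponential decay of correlations for the frozen process at a rate independent of $\bar A$. Combined with the reweighting identity \eqref{eq:ImportanceSampling}, which implies that the reweighted ergodic average under any frozen bias $\bar A$ equals $\mu$, one obtains that, conditionally on $A$ being essentially frozen on an interval of length $\gg 1$, the reweighted occupation measure on that interval is close to $\mu$.

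Third, setting $\phi_t:=\int_0^t e^{-\beta A_s(Z_s)}\dd s$, an elementary computation gives, for any $\varphi\in\mathcal C(\T^D)$,
\begin{equation*}
\tfrac{\dd}{\dd t}\nu_t(\varphi) \ =\ \frac{e^{-\beta A_t(Z_t)}}{\phi_t}\bigl(\varphi(Q_t,Z_t)-\nu_t(\varphi)\bigr).
\end{equation*}
Since $A_t$ is uniformly bounded, $\phi_t$ grows linearly in $t$, so this is a stochastic approximation with gain of order $1/t$. Summing the increments of $\nu_t(\varphi)-\mu(\varphi)$ over the update intervals $[t_{k-1},t_k]$ and using the two ingredients above, a discrete ODE-method argument identifies $\nu_t$ as an asymptotic pseudo-trajectory of the flow $\dot\nu=\mu-\nu$ on $\mathcal P(\T^D)$ (endowed with a weak-topology metric), whose unique equilibrium and $\omega$-limit set is $\{\mu\}$; this gives the a.s.\ weak convergence. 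The hardest step, I expect, will be the third one: because $A_t$ does not drift smoothly but jumps at each $t_k$ by increments that only decay like $1/k$ (and not in a summable way), the fluctuations of $\nu_t$ must be decomposed interval by interval and carefully summed using martingale concentration for the (genuinely Markovian) frozen-bias dynamics on each $[t_{k-1},t_k]$ of fixed length $T_{up}$, rather than exploiting a truly vanishing perturbation framework.
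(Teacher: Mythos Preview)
Your proposal correctly identifies the two key ingredients---the $O(1/k)$ variation of the bias between updates and the uniform spectral gap of the frozen dynamics---both of which the paper establishes (Lemmas~\ref{LemQAkQAr}, \ref{LemLoGSobLB}, \ref{LemEstimInterm}). The route differs, however: you invoke the full asymptotic pseudo-trajectory machinery of \cite{BenaimLedouxRaimond}, whereas the paper explicitly avoids this, noting that since all frozen generators $L_t$ share the invariant measure $\mu$, a shortcut via the Poisson equation is available. Concretely, writing $\Pi\varphi = L_{A_s} R_{A_s}\varphi$ (Lemma~\ref{Lem:estimPoisson}) and applying It\^o's formula on each $[t_{k-1},t_k]$, the paper decomposes $\tau(t)\bigl(\nu_t(\varphi)-\mu(\varphi)\bigr)$ into a bounded corrector, a telescoping sum of size $O(\ln t)$ (controlled by the Lipschitz dependence of $R_S$ on $S$, Lemma~\ref{LemQBQbprime}), and a martingale bounded by It\^o isometry; this yields directly $\mathbb E\bigl[|\nu_t(\varphi)-\mu(\varphi)|^2\bigr]\leqslant C\|\varphi\|_\infty^2/t$, and almost-sure convergence follows by Borel--Cantelli along geometric subsequences plus a Lipschitz interpolation in $\log t$. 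The paper's approach is more elementary and, crucially, delivers the quantitative $L^2$ rate that is reused verbatim in the proof of Theorem~\ref{Theorem:variance_asymptotique}; the step you flag as hardest---aggregating fluctuations over fixed-length intervals with non-summable bias jumps---is exactly what the corrector decomposition handles in one stroke.
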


The proof of Proposition \ref{PropNutMu} is postponed to the end of this section. Let us prove that indeed, given the latter, Theorem \ref{TheoremTempsLong} holds:

\begin{proof}[Proof of Theorem \ref{TheoremTempsLong}]
By the  arguments of the previous proof, for all $t\geq 0$, the function $\T^d \ni z\mapsto \theta_{\nu_t}(z)$ is bounded and Lipschitz with constants which are uniform in $t$. Hence, for any $\varepsilon>0$, we can find 
$N_\varepsilon \in \mathbb{N}^*$ and
a finite set of points $z_1, \cdots, z_{N_\varepsilon} \in \T^d$ such that for all $z \in \T^d$, there exists $i_z\in\cco 1,N_\varepsilon\ccf$ such that, for all $t>0$, $|\theta_{\nu_t}(z_{i_z}) -\theta_{\nu_t}(z)|\leqslant \varepsilon$.  The same holds for $\theta_\mu$. 
On the other hand, according to Proposition \ref{PropNutMu}, almost surely,
\[\underset{i\in\cco 1,N_\varepsilon\ccf} \sup |\theta_{\nu_t}(z_i) - \theta_{\mu}(z_i)| \ \underset{t\rightarrow \infty}\longrightarrow 0,\]
so that $\|\theta_{\nu_t} - \theta_{\mu}\|_\infty$ goes to zero as $t\rightarrow \infty$. Similar arguments enable us to obtain the same results 
for all the derivatives of $\theta_{\nu_t}$ and for $F_{\nu_t}$ and all its derivatives. 
 Note that $A_*$ is the minimizer of 
\[\widetilde{ \mathcal J}_{\mu}(f) \ = \ \int_{\T^d} | F_{\mu}(z) - \na f(z)|^2 \theta_{\mu}(z)\dd z.\]
Let $t=t_n$ for some $n\in\N^*$. The associated Euler-Lagrange equations associated with the two minimization problems on $A_t$ and $A_*$ lead to
\begin{eqnarray}\label{EqEulerAtAetoile}
\na \cdot \po \theta_{\mu} \na \po A_{t} - A_* \pf\pf &=& \na\cdot \po \theta_{\nu_t} F_{\nu_t} - \theta_\mu  F_\mu - \po \theta_{\nu_t} - \theta_\mu\pf \na A_{t}\pf .
\end{eqnarray}
Multiplying this equality by $A_{t} - A_*$, integrating and using the uniform control on $\na A_{t}$ established in Lemma \ref{LemNaABorneInfini} (and the lower bound on $\theta_\mu$), we get that
\begin{eqnarray*}
\int_{\T^d} |\na \po A_{t}(z) - A_* (z)\pf|^2 \dd z  & \underset{t\rightarrow\infty}\longrightarrow & 0.
\end{eqnarray*}
More generally, differentiating \eqref{EqEulerAtAetoile}, multiplying it by derivatives of $A_{t}-A_*$, integrating and using the uniform controls of the derivatives of $A_{t}$, we obtain by induction that 
\begin{eqnarray*}
\int_{\T^d} |\na \partial^\alpha\po A_{t} -  A_*\pf(z)|^2\dd z  & \underset{t\rightarrow\infty}\longrightarrow & 0
\end{eqnarray*}
for all multi-index $\alpha \in \mathbb{N}^d$. The first statement of Theorem \ref{TheoremTempsLong} then follows from Sobolev embeddings.

Finally, inequality \eqref{Eq-Erreur-A-A*} stems from the fact that $\J_\mu(A_*)  \leqslant \J_\mu(A)$. 
More precisely, using that
\[\int_{\T^d} \na A(y) \mu(q,y)\dd q \ = \ \int_{\T^d} \na_y V(q,y) \mu(q,y)\dd q\,,\]
we get that for all $f\in H$, $\J_\mu(f) = \widehat{\mathcal J}_\mu(f) + \int|\na_y V|^2\dd \mu - \int|\na A |^2\dd\mu$  where 
\[\widehat{\mathcal J}_\mu(f) \ = \ \int_{\T^p\times\T^d\times\T^d} |\na A(y) - \na f(z) |^2 K(y, z) \dd z\dd \mu ( q,  y)  + \lambda\int_{\T^d} |\na f(z)|^2 \dd z .\]
In other words, $\J_\mu$ and $\widehat{\mathcal J}_\mu$ only differ by an additive constant, so that $A_*$ is the minimizer of $\widehat{\mathcal J}_\mu$ over $H$. Then
\begin{eqnarray*}
\lefteqn{\int_{\T^p\times\T^d\times\T^d} |\na A(z) - \na A_*(z) |^2 K(y, z) \dd z\dd \mu ( q,  y) } & & \\
& \leqslant & 2\widehat{\mathcal J}_\mu(A_*) + 2 \int_{\T^p\times\T^d\times\T^d} |\na A(y) - \na A(z) |^2 K(y, z) \dd z\dd \mu ( q,  y)\\
 & \leqslant & 2\widehat{\mathcal J}_\mu(A) +  2 \int_{\T^p\times\T^d\times\T^d} |\na A(y) - \na A(z) |^2 K(y, z) \dd z\dd \mu ( q,  y)\\
 & \leqslant & 2\lambda \int_{\T^d} |\na A(z)|^2 \dd z +  4 \int_{\T^p\times\T^d\times\T^d} |\na A(y) - \na A(z) |^2 K(y, z) \dd z\dd \mu ( q,  y)\\
  & \leqslant & 2\lambda \int_{\T^d} |\na A(z)|^2 \dd z + 4 \|\na^2 A\|_\infty^2\sup_{y\in\T^d} \int_{\T^d} |y-z|^2 K(y,z)\dd z 
\end{eqnarray*}

\end{proof}

\nv{The rest of the section is dedicated to the proof of Proposition \ref{PropNutMu} and Theorem~\ref{Theorem:variance_asymptotique}. We start with a presentation of the the so-called ordinary differential equation (ODE) method  of \cite{BenaimLedouxRaimond}, which introduces some general ideas of the proof of Proposition \ref{PropNutMu}  (although, as we will see, we are in a very simple case so that we won't really use the fully general method).}

\subsection{Time change and the ODE method}


Following an idea of \cite{BenaimBrehier}, we introduce the  (random)   time change:
\[\tau(t) \ :=\ \int_0^t e^{-\beta A_s(Z_s)}\dd s,\]
so that
\[\nu_t \ = \ \frac{1}{\tau(t)} \int_0^t \delta_{Q_s,Z_s} \tau'(s) \dd s \ = \ \frac{1}{\tau(t)} \int_0^{\tau(t)} \delta_{Q_{\tau^{-1}(s)},Z_{\tau^{-1}(s)}} \dd s\,.\]
In other words, considering the time-changed process $\overline{X}_t := \po Q_{\tau^{-1}(t)},Z_{\tau^{-1}(t)}\pf$ and its occupation measure 
\begin{equation}\label{eq:defnubar}
\bar \nu_t \ = \ \frac1t \int_0^t \delta_{\overline{X}_s}\dd s\,,
\end{equation}
then $\nu_t  = \bar \nu_{\tau(t)}$.  Since, at a fixed time $t\geqslant 0$, $A_t$ is smooth and with Lebesgue integral zero, there always exists $z\in \mathbb T^d$ such that $A_t(z) = 0$, so that 
\begin{eqnarray}\label{BorneA_t}
\| A_t \|_\infty \ \leqslant \ \sqrt{d}/2 \|\na A_t\|_\infty  
\end{eqnarray}
where we used that   $\sqrt{d}/2$ is the diameter of $\mathbb T^d$. Together with Lemma~\ref{LemNaABorneInfini}, this implies that in particular, $\tau(t)$ goes to infinity with $t$.

Denoting $S_t(x) := A_{\tau^{-1}(t)}(z)$ for all $x=(q,z) \in \T^p \times \T^d$, the inhomogeneous Markov process $\overline{X}$ solves the SDE 
\begin{eqnarray}\label{Eq:Xtbar}
\dd \overline{X}_t &=& - e^{\beta S_t\po \overline{X}_t\pf } \nabla \po V-S_t\pf \po \overline{X}_t\pf + \sqrt{2\beta^{-1} e^{\beta S_t\po \overline{X}_t\pf }  } \dd \overline{B}_t\,,
\end{eqnarray}
where $(\overline{B}_t)_{t\geqslant 0}$ is a standard Brownian motion on $\T^D$, obtained from $(B_t)_{t\geqslant 0}$ through rescaling. We denote by $(L_t)_{t\geqslant 0}$ its infinitesimal generator, defined by: for all $\varphi\in \mathcal C^2(\T^p \times \T^d) = \mathcal C^2(\T^D)$ and all $x\in \T^p \times \T^d$,
\begin{eqnarray*}
L_t \varphi(x) &=& \underset{h\rightarrow 0}\lim \frac{\mathbb E\po \varphi(\overline{X}_{t+h})\ |\ \overline{X}_t = x\pf - \varphi(x)}h
\end{eqnarray*}
whenever the limit exists. Here, 
\begin{eqnarray*}
L_t \varphi(x) & =&  \po - \na \po V - S_t\pf (x)\cdot \na \varphi(x) + \frac1\beta \Delta \varphi(x) \pf e^{\beta S_t(x)}.
\end{eqnarray*}
We denote by $(P^{(t)}_s)_{s\geqslant 0}$ the Markov semi-group generated by $L_t$ for a fixed $t$. Formally, $P_s^{(t)} = e^{sL_t}$.  
For all $t\geqslant 0$  the unique invariant measure of $L_t$ is $\mu$ (see \cite[Proposition 3.1]{BenaimBrehier})\nv{, which is a natural consequence of the fact we consider a process interacting with its \emph{unbiased} occupation measure}. From Lemma~\ref{LemNaABorneInfini} and the bound \eqref{BorneA_t}, we consider $C_0>0$ such that $S_t\in \mathcal B_{C_0}$ for all $t\geqslant 0$ where
\begin{eqnarray*}
\mathcal B_{C_0} & :=& \left\{S\in\mathcal C^\infty\po \T^D\pf,\ \int_{\T^D} S(x)\dd x = 0, \ \| S\|_{\mathcal{C}^2(\T^D)} \leqslant C_0 \right\}\,.
\end{eqnarray*}


The principle of the ODE method is the following: for large values of the time $t$, the evolution of $\bar \nu_t$ is slow (because of the $t^{-1}$ factor in (\ref{eq:defnubar})). Hence, 
for $1 \ll s \ll t$, in principle, it holds that $S_{u} \simeq S_t$ for $u\in[t,t+s]$, so that
\begin{eqnarray}\label{EqApproxEDO}
\bar \nu_{t+s} & = & \frac{t}{t+s}\bar  \nu_t + \frac{s}{s+t} \po \frac1s\int_{t}^{t+s} \delta_{{\overline{X}}_u} \dd u\pf \ \simeq \ \frac{t}{t+s}\bar  \nu_t + \frac{s}{s+t} \mu.
\end{eqnarray}
In other words, the evolution of $\bar \nu_t$ approximately follows the deterministic flow
\[\partial_t m_t \ = \ \frac1t\po \mu - m_t\pf,\]
which converges to $\mu$, so that $\bar \nu_t$ (hence $\nu_t$) should also converge to $\mu$.

\nv{In general cases of self-interacting processes, as  those studied in \cite{BenaimLedouxRaimond}, the asymptotic deterministic flow may be more complicated (see in particular \cite{BenaimBrehierMonmarche} for the ABF algorithm with the \emph{non-reweighted} occupation measure). Here, we are in a very simple case since all the generators $L_t$, $t\geqslant 0$, have the same invariant measure, so that the flow is simply a relaxation toward this equilibrium. For this reason, in order to make rigorous the previous heuristic, instead of applying the technical arguments of  \cite{BenaimLedouxRaimond}, we may use a shortcut that  yields a simpler proof and more explicit estimates (allowing in particular to tackle the question of the asymptotic variance, which may be much more intricate in  other cases), similarly to  \cite{BenaimBrehier2019} for the ABP algorithm.
}

We will need some quantitative estimates. Indeed, note that, for the approximation \eqref{EqApproxEDO} to hold, the speed of convergence of $P^{(t)}$ toward $\mu$ should be uniform in $t$, and the time evolution of $A_t$ should be controlled in some sense. As we will see below, these are direct consequences of the estimates of Lemma~\ref{LemNaABorneInfini}. 
\subsection{Preliminary estimates}\label{Sec:prelimEstimTempsLong}

For a fixed $S\in \mathcal C^\infty(\T^D)$, consider $L_S$ defined for $\varphi\in \mathcal C^\infty(\T^D)$ by
\begin{eqnarray*}
L_S \varphi(x) &=&  \po - \na \po V - S\pf (x)\cdot \na \varphi(x) + \frac1\beta \Delta \varphi(x) \pf e^{\beta S(x)}, 
\end{eqnarray*}
which is the infinitesimal generator of the SDE
\begin{eqnarray*}
\dd X^S_t & =& - e^{\beta S(X^S_t)}\na \po V  - S\pf(X^S_t) \dd t + \sqrt{2\beta^{-1} e^{\beta S(X^S_t)} } \dd B_t\,.
\end{eqnarray*}
 Denote $\po P^S_t\pf_{t\geqslant 0}$ the associated (homogeneous) semi-group and $\Gamma_S$ the associated  carr\'e-du-champs operator, defined for $\varphi,\psi\in\mathcal C^\infty(\T^D)$ and all $x\in \T^D$ by
\[\Gamma_S(\varphi,\psi)(x) \ := \ \frac12 \po L_S(\varphi\psi) - \varphi L_S \psi - \psi L_S \varphi\pf(x)  \ = \ \beta^{-1}e^{\beta S(x)} \nabla \varphi(x)\cdot \nabla \psi(x)\,,\]
and $\Gamma_S(\varphi):=\Gamma_S(\varphi,\varphi)$. By classical elliptic regularity arguments, if $\varphi\in\mathcal C^\infty(\T^D)$ then $P_t^S \varphi\in\mathcal C^\infty(\T^D)$, 
in particular $\mathcal C^\infty(\T^D)$ is a core for $L_S$, see \cite[Section 1.13]{BakryGentilLedoux}. More precisely each derivative of $P_t^S \varphi$ is uniformly bounded over all finite time interval, which 
ensures the validity of  the computations in the proofs of the next lemmas.  Integrating twice by parts, it can be easily seen that for all $\varphi,\psi\in\mathcal{C}^\infty(\T^D)$,
\[\int_{\T^D} \varphi(x) L_S \psi(x) \mu(\dd x) \ = \ \int_{\T^D} \psi(x) L_S \varphi(x) \mu(\dd x)\,,\]
in other words $L_S$ is a self-adjoint operator on $L^2(\mu)$.


\begin{lem}\label{LemLoGSobLB}
Let us assume that $D\geq 3$. Then, there exists $C_1>0$ such that for all $S\in\mathcal B_{C_0}$, $(\mu,L_S)$ satisfies a Poincar\'e inequality and a Sobolev inequality both with constant $C_1$, in the sense that for all $\varphi\in\mathcal C^\infty(\T^D)$,
\begin{eqnarray*}
 \| \varphi\|_{L^2(\mu)}^2  & \leqslant & C_1 \int_{\T^D}  \Gamma_S(\varphi) \dd \mu\\
\| \varphi\|_{L^p(\mu)}^2 & \leqslant & C_1 \po \| \varphi\|_{L^2(\mu)}^2 +   \int_{\T^D} \Gamma_S(\varphi)\dd \mu \pf, 
\end{eqnarray*}
where $p= \frac{2D}{D-2}$.
\end{lem}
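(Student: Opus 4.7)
The plan is to reduce both inequalities to the corresponding standard inequalities on $\T^D$ with respect to Lebesgue measure, by exploiting the uniform comparability of the measures and weights involved as $S$ ranges over $\mathcal B_{C_0}$.

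The starting observation is that, since $V\in\mathcal C^\infty(\T^D)$ on the compact torus, the density $e^{-\beta V}/Z_V$ of $\mu$ with respect to Lebesgue measure is bounded above and below by strictly positive constants depending only on $V$ and $\beta$. Moreover, any $S\in\mathcal B_{C_0}$ satisfies in particular $\|S\|_\infty\leqslant C_0$, so that $e^{-\beta C_0}\leqslant e^{\beta S(x)}\leqslant e^{\beta C_0}$ uniformly in $x\in\T^D$ and in $S$. Combining these two facts, there exist constants $c,C>0$ depending only on $V,\beta,C_0,D$ (and on $q$) such that for every $\varphi\in\mathcal C^\infty(\T^D)$,
\[ c\|\varphi\|_{L^q(\dd x)}\leqslant \|\varphi\|_{L^q(\mu)}\leqslant C\|\varphi\|_{L^q(\dd x)},\qquad c\int_{\T^D}|\na\varphi|^2\dd x\leqslant \int_{\T^D}\Gamma_S(\varphi)\dd\mu\leqslant C\int_{\T^D}|\na\varphi|^2\dd x. \]

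With these two-sided comparisons in hand, both inequalities reduce to their Lebesgue-on-the-torus counterparts. For the Poincar\'e inequality (understood, as is standard, for $\varphi$ centered with respect to $\mu$, since the naive statement obviously fails on constants), I would apply the classical Poincar\'e inequality on $\T^D$ for Lebesgue measure to the centered function, then transfer the left-hand side to $L^2(\mu)$ and the right-hand side to $\int\Gamma_S(\varphi)\dd\mu$ via the above; the discrepancy between $\mu$-centering and Lebesgue-centering is a constant that is absorbed in the inequality. For the Sobolev inequality, the hypothesis $D\geqslant 3$ is precisely what ensures the classical Sobolev embedding $H^1(\T^D)\hookrightarrow L^p(\T^D)$ with $p=2D/(D-2)$ on the flat torus, and the same transfer argument yields the $\mu$--$\Gamma_S$ version.

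The main potential difficulty, which turns out not to be one, is to ensure that $C_1$ is genuinely independent of $S\in\mathcal B_{C_0}$. This follows at once from the uniform bound $\|S\|_{\mathcal C^2(\T^D)}\leqslant C_0$ built into the definition of $\mathcal B_{C_0}$, which gives the required uniform $L^\infty$ bound on $e^{\beta S}$; neither the zero-mean condition on $S$ nor higher regularity plays any role here.
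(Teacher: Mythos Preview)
Your proposal is correct and follows essentially the same route as the paper: both inequalities are obtained by transferring the classical Poincar\'e and Sobolev inequalities for Lebesgue measure on $\T^D$ to $(\mu,\Gamma_S)$ via the uniform upper and lower bounds on the density of $\mu$ and on $e^{\beta S}$ for $S\in\mathcal B_{C_0}$. You are in fact slightly more careful than the paper in flagging that the Poincar\'e inequality must be read for $\mu$-centered $\varphi$ (the paper applies it only to $\Pi\varphi$ in the sequel, so this is implicit there).
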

\begin{proof}
For $S=0$, the first inequality is the classical Poincar\'e inequality, which holds here since the density of $\mu$ with respect to the Lebesgue measure is bounded above and below away from zero, see \cite[Proposition 5.1.6]{BakryGentilLedoux}. As a consequence, there exists $c>0$ such that for all $S\in \mathcal B_{C_0}$ and $\varphi\in\mathcal C^\infty(\T^D)$,
\[ \| \varphi\|_{L^2(\mu)}^2  \ \leqslant \ c\int_{\T^D} |\na \varphi|^2 \dd \mu \ \leqslant \ c e^{\beta C_0} \int_{\T^D} \Gamma_S (\varphi) \dd \mu\,.\]
Similarly, from the Sobolev inequality satisfied by the Lebesgue measure on $\T^D$  \cite[Section 6]{BakryGentilLedoux},
\begin{eqnarray*}
\| \varphi\|_{L^p(\mu)}^2  & \leqslant & \|\mu\|_\infty^{2/p} \| \varphi\|_{L^p(\T^D)}^2 \\
& \leqslant & C \|\mu\|_\infty^{2/p} \po \| \varphi\|_{L^2(\T^D)}^2 + \|\na \varphi\|_{L^2(\T^D)}^2 \pf \\
& \leqslant & C \|\mu\|_\infty^{2/p} \|\mu^{-1}\|_\infty^2 \po \| \varphi\|_{L^2(\mu)}^2 + \|\na \varphi\|_{L^2(\mu)}^2 \pf\\
& \leqslant & C e^{\beta C_0} \|\mu\|_\infty^{2/p} \|\mu^{-1}\|_\infty^2 \po \| \varphi\|_{L^2(\mu)}^2 + \int_{\T^D} \Gamma_S(\varphi)\dd \mu\pf\,.
\end{eqnarray*}
\end{proof}

These inequalities, in turn, yield the following estimates:

\begin{lem}\label{LemEstimInterm}
There exist $C_2>0$ such that, for all $S\in\mathcal B_{C_0}$,  $t\geqslant0$ and $\varphi\in\mathcal C^\infty(\T^D)$, 
\begin{eqnarray*}
\| P_t^S \Pi \varphi \|_{L^2(\mu)} & \leqslant & e^{- t/C_2} \|  \Pi \varphi \|_{L^2(\mu)}\\
\| P_t^S \varphi \|_{\infty} & \leqslant & \frac{C_2}{\min(1,t^{d/2})} \|   \varphi \|_{L^2(\mu)}\\
\| \na P_t^S \varphi \|_{\infty} & \leqslant & \frac{C_2}{\min(1,\sqrt t)} \|   \varphi \|_{\infty},
\end{eqnarray*}
with $\Pi \varphi := \varphi-\int_{\T^D} \varphi \dd \mu$.
\end{lem}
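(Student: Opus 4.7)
The plan is to establish the three bounds in turn, each leveraging self-adjointness of $L_S$ on $L^2(\mu)$, the functional inequalities of Lemma~\ref{LemLoGSobLB}, and classical parabolic regularity. The unifying point is that the $\mathcal C^2$ bound defining $\mathcal B_{C_0}$ yields two-sided control $e^{-\beta C_0} \leq e^{\beta S} \leq e^{\beta C_0}$ and uniform $\mathcal C^1$ control on the drift $-e^{\beta S}\nabla(V-S)$, so that $L_S$ is uniformly elliptic with uniformly regular coefficients and all constants can be chosen independent of $S \in \mathcal B_{C_0}$.

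For the first estimate, since $L_S$ is self-adjoint and nonpositive on $L^2(\mu)$ with invariant measure $\mu$, the projector $\Pi$ commutes with $P_t^S$. Differentiating $\|P_t^S \Pi\varphi\|_{L^2(\mu)}^2$ and invoking the Poincaré inequality of Lemma~\ref{LemLoGSobLB} yields
\[\frac{d}{dt}\|P_t^S \Pi\varphi\|_{L^2(\mu)}^2 \;=\; -2\int_{\T^D}\Gamma_S(P_t^S \Pi\varphi)\,d\mu \;\leq\; -\frac{2}{C_1}\|P_t^S \Pi\varphi\|_{L^2(\mu)}^2,\]
so Grönwall's lemma gives exponential decay at rate $1/C_1$.

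For the second estimate, I would deduce a Nash inequality from the Sobolev inequality of Lemma~\ref{LemLoGSobLB} by interpolation with $L^1(\mu)$, then run the standard Nash--Moser iteration to obtain ultracontractivity $\|P_t^S\|_{L^2(\mu) \to L^\infty} \leq C t^{-\alpha}$ for $t \in (0,1]$, with $\alpha$ determined by the Sobolev exponent; for $t \geq 1$, writing $P_t^S = P_1^S P_{t-1}^S$ and using $L^2(\mu)$-contractivity of $P_{t-1}^S$ yields uniform boundedness. The third estimate follows from short-time parabolic regularity (via, for instance, the Bismut--Elworthy--Li formula or interior Schauder estimates for the backward Kolmogorov equation), which gives $\|\nabla P_t^S \varphi\|_\infty \leq C/\sqrt t\,\|\varphi\|_\infty$ for $t \in (0,1]$; for $t \geq 1$ one again decomposes $P_t^S = P_1^S P_{t-1}^S$ and uses the Markov contraction $\|P_{t-1}^S \varphi\|_\infty \leq \|\varphi\|_\infty$.

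The most technical piece is the Nash--Moser iteration for the ultracontractivity bound, which must be carried out with constants tracked uniformly in $S \in \mathcal B_{C_0}$; however, the key uniform inputs are precisely those provided by Lemma~\ref{LemLoGSobLB}, so the argument is essentially classical once that lemma is in hand.
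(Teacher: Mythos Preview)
Your treatment of the first two estimates matches the paper's: exponential $L^2(\mu)$ decay from the Poincar\'e inequality via the Gr\"onwall argument you wrote out, and ultracontractivity from the Sobolev inequality (the paper simply cites \cite[Theorem~6.3.1]{BakryGentilLedoux} rather than spelling out the Nash--Moser route, but the content is the same).

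For the third estimate your approach is correct but genuinely different from the paper's. You appeal to probabilistic or PDE machinery (Bismut--Elworthy--Li or Schauder estimates), relying on the uniform $\mathcal C^1$ control of drift and diffusion coefficients that $\mathcal B_{C_0}$ provides; this is valid and the constants can indeed be tracked uniformly in $S$. The paper instead stays within the Bakry--\'Emery framework: it computes the iterated operator $\Gamma_{\nabla,S}(\varphi) = \tfrac12 L_S(|\nabla\varphi|^2) - \nabla\varphi\cdot\nabla L_S\varphi$ and shows directly that $\Gamma_{\nabla,S}(\varphi) \geqslant -c|\nabla\varphi|^2$ for some $c>0$ uniform over $\mathcal B_{C_0}$. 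Then an interpolation between $|\nabla P_t^S\varphi|^2$ and $P_t^S(\varphi^2)$ along $s\mapsto \alpha(s)P_{t-s}^S|\nabla P_s^S\varphi|^2 + e^{\beta C_0}P_{t-s}^S(P_s^S\varphi)^2$, with $\alpha(s)=(1-e^{-2cs})/c$, is shown to be monotone and yields the pointwise bound $\alpha(t)|\nabla P_t^S\varphi|^2 \leqslant e^{\beta C_0}\|\varphi\|_\infty^2$. The Bakry--\'Emery route is more self-contained and intrinsic to the carr\'e-du-champ structure already set up for the first two estimates, and makes the constant fully explicit; your routes are equally valid but import external tools (stochastic flows or parabolic regularity theory) whose uniform-in-$S$ constants, while certainly available from the $\mathcal C^2$ bound, are less transparent.
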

\begin{proof}
The first estimate is a usual consequence of the Poincar\'e inequality, see \cite[Proposition 5.1.3]{BakryGentilLedoux}. The second one, namely the ultracontractivity of the semi-group, is a consequence of the Sobolev inequality (see \cite[Theorem 6.3.1]{BakryGentilLedoux}). The last one can be established thanks to the Bakry-Emery calculus (see \cite[Section 1.16]{BakryGentilLedoux} for an introduction), by showing that $L_S$ satisfies a curvature estimate, as we now detail. We would like to compare $|\na P_t \varphi|^2$ and $P_t (\varphi^2)$. A seminal idea of the Bakry-Emery calculus is that quantities of the form $\Theta(P_t \varphi)$ and $P_t\Theta(\varphi)$, where $\Theta$ is some operator can be linked through the interpolation  $P_{t-s}\Theta(P_s \varphi)$, $s\in[0,t]$, so that $\Theta(P_t \varphi) - P_t\Theta(\varphi) = \int_0^t \partial_s \po P_{t-s} \Theta (P_s \varphi)\pf \dd s$. When differentiating with respect to $s$, we obtain quantities of the form $-2P_{t-s}\Gamma_{\Theta}(P_s \varphi)$ for some operator $\Gamma_{\Theta}$, which is of a form similar to the interpolation ($\Theta$ being replaced by $\Gamma_\Theta$).

 More precisely, when $\Theta(\varphi) = \varphi^2$, then $\Gamma_{\Theta}$ is the usual carr\'e-du-champ operator, and when $\Theta(\varphi)=|\na \varphi|^2$  we end up with
\begin{eqnarray*}
\Gamma_{\na,S}(\varphi) &= & \frac12 L_S\po |\na \varphi|^2\pf - \na \varphi\cdot \na L_S \varphi\,,
\end{eqnarray*}
for $\varphi\in\mathcal C^\infty(\T^D)$. Writing $[\varphi,\psi]=\varphi\psi-\psi\varphi$, we compute
 \begin{eqnarray*}
\Gamma_{\nabla,S} (\varphi) & = & \sum_{i=1}^D \po \Gamma_S(\partial_{x_i}\varphi) + \partial_{x_i} \varphi [\partial_{x_i},L_S]\varphi \pf\\
& \geqslant & \sum_{i=1}^D  \Big[ \beta^{-1} e^{-\beta \|S\|_\infty}|\na \partial_{x_i} \varphi|^2 - \beta^{-1} e^{\beta \|S\|_\infty}|\na\partial_{x_i} (V-S)||\na \varphi||\partial_{x_i} \varphi|\\
& & - \beta |\partial_{x_i} S|e^{\beta \|S\|_\infty} |\partial_{x_i}\varphi||\nabla \po V - S\pf \cdot \na \varphi + \frac1\beta \Delta \varphi|\Big]\\
& \geqslant & -c |\na \varphi|^2
\end{eqnarray*}
for some $c>0$ which is uniform over $S\in\mathcal B_{C_0}$. Now, following \cite[Lemma 4]{MonmarcheGamma}, we want to consider the interpolation between $\alpha(t) |\na P_t \varphi|^2 + (P_t \varphi)^2$ and $\alpha(0)P_t|\na \varphi|^2+P_t (\varphi^2)$ for some $\alpha$ with $\alpha(0) = 0 <\alpha(t)$. For  fixed $\varphi\in\mathcal C^\infty(\T^D)$, $x\in\T^D$ and  $t\geqslant 0$, we set for all $s\in[0,t]$
\[\Psi(s) \ = \ \alpha(s) P_{t-s}^S  |\na P_{s}^S \varphi|^2(x)  + e^{\beta C_0} P_{t-s}^S\po P_{s}^S \varphi\pf^2(x)\]
with $\alpha(s) = (1-\exp(-2ct))/c$, so that
\begin{eqnarray*}
\partial_s \Psi(s) & = & P_{t-s}^S \po -2\alpha(s) \Gamma_{\na,S} + \alpha'(s) |\na \cdot|^2 -2 e^{\beta C_0} \Gamma_S \pf\po P^S_{s} \varphi\pf(x)\\
& \leqslant &  \po 2\alpha(s) c + \alpha'(s)  -2  \pf P_{t-s}^S |\na P^S_{s} \varphi|^2(x) \ = \ 0\,.
\end{eqnarray*}
In particular,
\[\alpha(t) |\na P_t^S \varphi|^2(x) \ \leqslant \ \Psi(t) \ \leqslant \ \Psi(0) \ = \ e^{\beta C_0} P_t^S \varphi^2(x) \ \leqslant \ e^{\beta C_0}\|\varphi\|_\infty^2\]
which yields the desired estimate.
\end{proof}

\begin{lem}\label{Lem:estimPoisson}
There exists $C_3>0$ such that for all $S\in\mathcal B_{C_0}$, the operator $R_S$ defined for all $\varphi\in\mathcal{C}^\infty(\T^D)$ by
\begin{eqnarray*}
R_S \varphi & = & -\int_0^\infty P_t^S \Pi \varphi \dd t
\end{eqnarray*}
satisfies $L_SR_S=R_SL_S= \Pi$ and, for all $\varphi\in\mathcal{C}^\infty(\T^D)$,
 \begin{eqnarray}\label{EqEstimateQB}
 \| R_S \varphi \|_\infty + \|\na R_S \varphi\|_\infty\  + \|\Delta R_S \varphi\|_\infty & \leqslant & C_3 \|\varphi\|_\infty.
 \end{eqnarray}\end{lem}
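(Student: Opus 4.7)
The plan is to build all three estimates on top of Lemma~\ref{LemEstimInterm}, which provides uniform (over $S\in\mathcal B_{C_0}$) $L^2$ exponential decay, ultracontractivity, and a first-order gradient smoothing. The key observation that makes everything uniform is that all constants produced in Section~\ref{Sec:prelimEstimTempsLong} depend only on $C_0$, $V$, $\beta$ and $D$, so the bounds we obtain will depend only on these quantities.

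First I would verify that $R_S\varphi$ is well defined as a sup-norm absolutely convergent integral. Split $\int_0^\infty P_t^S\Pi\varphi\,\dd t$ at $t=1$. On $[0,1]$, since $P_t^S$ is a Markov contraction, $\|P_t^S \Pi\varphi\|_\infty\leqslant\|\Pi\varphi\|_\infty\leqslant 2\|\varphi\|_\infty$. On $[1,\infty)$, decompose $P_t^S\Pi\varphi = P_{1/2}^S\po P_{t-1/2}^S\Pi\varphi\pf$, apply ultracontractivity at time $1/2$ to pass from $L^2(\mu)$ to $L^\infty$, and then the $L^2$ exponential decay to $P_{t-1/2}^S\Pi\varphi$:
\[
\|P_t^S\Pi\varphi\|_\infty \ \leqslant\ \frac{C_2}{(1/2)^{D/2}}\|P_{t-1/2}^S\Pi\varphi\|_{L^2(\mu)} \ \leqslant\ C\, e^{-t/(2C_2)}\,\|\varphi\|_\infty\,.
\]
Summing the two contributions gives $\|R_S\varphi\|_\infty\leqslant C\|\varphi\|_\infty$ uniformly in $S$. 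The identities $L_SR_S\varphi=R_SL_S\varphi=\Pi\varphi$ then follow from $\partial_t P_t^S=L_SP_t^S=P_t^S L_S$ (valid on $\mathcal{C}^\infty(\T^D)$ since it is a core), from the fact that $\Pi$ and $L_S$ commute on smooth functions (both equal $L_S$ there, using $L_S 1=0$ and the invariance of $\mu$), and from $P_t^S\Pi\varphi\to 0$ in sup norm as $t\to\infty$ by the decomposition above.

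For the gradient bound, I differentiate under the integral (justified by the estimate below, applied to get an integrable dominating function). On $[0,1]$, apply directly the third estimate of Lemma~\ref{LemEstimInterm} to obtain $\|\na P_t^S\Pi\varphi\|_\infty\leqslant 2C_2/\sqrt{t}\,\|\varphi\|_\infty$, which is integrable near $0$. For $t\geqslant 1$, write $\na P_t^S\Pi\varphi=\na P_{1/2}^S\po P_{t-1/2}^S\Pi\varphi\pf$, so that $\|\na P_t^S\Pi\varphi\|_\infty\leqslant \sqrt{2}\,C_2\|P_{t-1/2}^S\Pi\varphi\|_\infty$; by the sup-norm estimate just derived on the tail, this is dominated by $Ce^{-t/(4C_2)}\|\varphi\|_\infty$, hence integrable.

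Finally, the Laplacian bound is obtained from the structural identity satisfied by $L_S$. Since $L_S\psi=e^{\beta S}\po -\na(V-S)\cdot\na\psi+\beta^{-1}\Delta\psi\pf$, rearranging gives
\[
\Delta\psi\ =\ \beta e^{-\beta S}L_S\psi+\beta\,\na(V-S)\cdot\na\psi\,.
\]
Applying this to $\psi=R_S\varphi$ and using $L_SR_S\varphi=\Pi\varphi$ yields
\[
\Delta R_S\varphi\ =\ \beta e^{-\beta S}\Pi\varphi+\beta\,\na(V-S)\cdot\na R_S\varphi\,.
\]
The first term is bounded by $2\beta e^{\beta C_0}\|\varphi\|_\infty$, and since $\|\na V\|_\infty$ is finite and $\|\na S\|_\infty\leqslant C_0$ uniformly in $S\in\mathcal B_{C_0}$, the second term is controlled by the gradient bound from the previous step. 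Combining the three estimates yields the desired constant $C_3$. The only delicate point in the whole argument is to make sure that all the constants remain uniform over $S\in\mathcal B_{C_0}$, which is handled by the definition of $\mathcal B_{C_0}$ and by Lemmas~\ref{LemLoGSobLB}--\ref{LemEstimInterm}.
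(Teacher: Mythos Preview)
Your proof is correct and follows essentially the same route as the paper: split the time integral at a fixed positive time, use the Markov contraction on the short-time piece and ultracontractivity combined with $L^2(\mu)$ exponential decay on the tail, bootstrap the gradient via the smoothing estimate of Lemma~\ref{LemEstimInterm}, and recover the Laplacian bound by rearranging $L_S R_S\varphi=\Pi\varphi$. The only differences are cosmetic (you split at $t=1/2$ where the paper splits at $t=1$, yielding slightly different constants).
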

\begin{proof}
We follow the proof of \cite[Section 5.2 and Lemma 5.1]{BenaimLedouxRaimond}. First, from Lemma~\ref{LemEstimInterm} (and using the fact that $\|P_t^S \varphi\|_\infty \leqslant \|\varphi\|_\infty$ for all $t\geqslant 0$),
\begin{eqnarray*}
  \int_0^\infty \| P_t^S\Pi \varphi\|_\infty dt & \leqslant &   \int_0^1 \|\Pi \varphi\|_\infty dt +   \int_1^\infty \| P_t^S\Pi \varphi\|_\infty dt \\
  & \leqslant & 2\|\varphi\|_\infty +   C_2 \int_1^\infty \| P_{t-1}^S\Pi \varphi\|_{L^2(\mu)} dt \\
    & \leqslant & 2\|\varphi\|_\infty +    C_2 \int_1^\infty  e^{-(t-1)/C_2} \|\Pi \varphi\|_{L^2(\mu)} dt \\
    & \leqslant & 2(1+C_2^2)\|\varphi\|_\infty\,,
\end{eqnarray*} 
and similarly, using  the fact that $\|\nabla P_t^S \Pi \varphi\|_\infty = \| \nabla P_1 P_{t-1}^S
 \Pi \varphi \|_\infty \le C_2 \| P_{t-1}^S \Pi \phi \|_\infty$ for $t\geqslant 1$,
\begin{eqnarray*}
  \int_0^\infty \| \nabla P_t^S \Pi \varphi\|_\infty dt & \leqslant &   \int_0^1 \frac{C_2}{\sqrt t}   \|  \Pi \varphi\|_\infty dt + C_2 \int_1^\infty    \|   P_{t-1}^S\Pi \varphi\|_\infty dt   \\  
    & \leqslant & C_2 \po \nv{4} +2(1+C_2^2)\pf \|\varphi\|_\infty\,.
\end{eqnarray*} 
In particular $R_S \varphi$ and $\nabla R_S\varphi$ are well defined in $L^\infty(\T^D)$ for $\varphi\in\mathcal C^\infty(\T^D)$. Moreover, using the fact that, from Lemma~\ref{LemEstimInterm}, $\| P_t^S \Pi \varphi \|_\infty  \leqslant C_2 e^{-(t-1)/C_2}\|  \Pi \varphi \|_{L^2(\mu)}  \rightarrow 0$ as $t\rightarrow +\infty$,
\begin{eqnarray*}
L_S R_S \varphi &  = & -\int_0^\infty L_S P_t^S \Pi \varphi dt \\
& = & -\int_0^\infty \partial_t\left( P_t^S \Pi \varphi\right) dt  \ = \ \Pi \varphi\,.
\end{eqnarray*}
The case of $R_SL_S$ is similar: since $\mu$ is invariant for $L_S$, $L_S\Pi=L_S = \Pi L_S$, and thus $P_t^S \Pi L_S = P_t^S L_S\Pi = \partial_t(P_t^S\Pi\varphi)$ for all $t\geqslant 0$.

As a consequence,
 \[|\Delta R_S \varphi|\ \leqslant \ e^{\beta C_0} |e^{\beta S}\Delta R_S \varphi| \ \leqslant \  e^{\beta C_0} \po \| e^{\beta S} \nabla (V-S)\cdot \na R_S \varphi\|_\infty +  \|\Pi \varphi\|_\infty \pf \ \leqslant \ C_3\|\varphi\|_\infty\]
 for some $C_3>0$ uniform over $S\in\mathcal B_{C_0}$, which yields the desired result.
\end{proof}
%
 \begin{lem}\label{LemQBQbprime}
 There exist $C_4>0$ such that for all $S_1,S_2\in\mathcal B_{C_0}$ and $\varphi\in\mathcal C^\infty(\T^D)$,
 \begin{eqnarray*}
 \| R_{S_1}\varphi - R_{S_2} \varphi \|_\infty + \nv{\|\na  R_{S_1}\varphi - \na  R_{S_2} \varphi \|_\infty } & \leqslant & C_4 \|\na S_1 - \na S_2\|_\infty \|\varphi\|_\infty.
 \end{eqnarray*}
 \end{lem}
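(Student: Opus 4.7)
The plan is to avoid a semigroup Duhamel formula and work directly at the resolvent level, so that Lemma~\ref{Lem:estimPoisson} can be reused as a black box. Writing $u_i := R_{S_i}\varphi$ for $i=1,2$, Lemma~\ref{Lem:estimPoisson} gives $L_{S_i} u_i = \Pi \varphi$, while the integral formula defining $R_{S_i}$ and the invariance of $\mu$ under $P_t^{S_i}$ yield $\int u_i \dd \mu = 0$. Subtracting the two equations and using $L_{S_2} u_2 = L_{S_1} u_1 = \Pi\varphi$, one obtains
\[L_{S_1}(u_1 - u_2) \ = \ (L_{S_2} - L_{S_1}) R_{S_2}\varphi.\]
The right-hand side has zero $\mu$-mean, since $\int L_{S_i} \psi \dd \mu = 0$ for every smooth $\psi$. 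Applying $R_{S_1}$ (which, by Lemma~\ref{Lem:estimPoisson}, is a right-inverse of $L_{S_1}$ on smooth mean-zero functions) produces the key identity
\[R_{S_1}\varphi - R_{S_2}\varphi \ = \ R_{S_1}\left[(L_{S_2} - L_{S_1}) R_{S_2}\varphi\right].\]

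Next I would estimate the inner factor. Expanding,
\[(L_{S_2}-L_{S_1})\psi \ = \ (e^{\beta S_2}-e^{\beta S_1})\po -\na V\cdot\na\psi + \na S_2\cdot\na\psi + \tfrac 1\beta \Delta\psi\pf + e^{\beta S_1}(\na S_2 - \na S_1)\cdot\na\psi.\]
Since both $S_1$ and $S_2$ are in $\mathcal B_{C_0}$ and hence have zero mean, so does their difference, which therefore vanishes somewhere on $\T^D$; the mean value inequality then gives $\|S_1 - S_2\|_\infty \leqslant \tfrac{\sqrt D}{2} \|\na S_1 - \na S_2\|_\infty$, and consequently $\|e^{\beta S_1} - e^{\beta S_2}\|_\infty \leqslant \tfrac{\beta \sqrt D}{2} e^{\beta C_0} \|\na S_1 - \na S_2\|_\infty$. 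Combining with the uniform $\mathcal C^2$ control on $S_1, S_2$ yields a constant $C>0$ (depending only on $\beta, C_0, V$) with
\[\|(L_{S_2}-L_{S_1})\psi\|_\infty \ \leqslant \ C \|\na S_1 - \na S_2\|_\infty \po \|\na \psi\|_\infty + \|\Delta \psi\|_\infty\pf.\]

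Specializing to $\psi = R_{S_2}\varphi$ and invoking the bounds $\|\na R_{S_2}\varphi\|_\infty + \|\Delta R_{S_2}\varphi\|_\infty \leqslant 2 C_3\|\varphi\|_\infty$ from Lemma~\ref{Lem:estimPoisson} yields
\[\|(L_{S_2}-L_{S_1}) R_{S_2}\varphi\|_\infty \ \leqslant \ 2CC_3 \|\na S_1 - \na S_2\|_\infty\|\varphi\|_\infty.\]
Plugging this into the key identity and using, respectively, the $\|R_{S_1}\cdot\|_\infty$ and $\|\na R_{S_1}\cdot\|_\infty$ estimates in Lemma~\ref{Lem:estimPoisson} controls both $\|R_{S_1}\varphi - R_{S_2}\varphi\|_\infty$ and $\|\na R_{S_1}\varphi - \na R_{S_2}\varphi\|_\infty$ by $2CC_3^2\|\na S_1 - \na S_2\|_\infty\|\varphi\|_\infty$, giving the claim with $C_4 := 2CC_3^2$.

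The only delicate point to verify is the admissibility of $(L_{S_2}-L_{S_1}) R_{S_2}\varphi$ as input to $R_{S_1}$: it is $\mathcal C^\infty$ by elliptic regularity (the same regularization argument used in Lemma~\ref{Lem:estimPoisson} for $R_{S_2}\varphi$, then applying the smooth coefficient operator $L_{S_2}-L_{S_1}$), and it has zero $\mu$-mean by invariance. Apart from this technicality and the explicit algebraic computation of $L_{S_2}-L_{S_1}$, every estimate is a direct application of Lemma~\ref{Lem:estimPoisson}, so I do not expect any serious obstacle in carrying out the plan.
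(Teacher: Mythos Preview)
Your proof is correct and follows essentially the same route as the paper's: both derive the resolvent identity $(R_{S_1}-R_{S_2})\varphi = R_{S_j}(L_{S_2}-L_{S_1})R_{S_i}\varphi$ (the paper with $(i,j)=(1,2)$, you with $(i,j)=(2,1)$), bound the inner factor via $\|\na S_1-\na S_2\|_\infty$ and the zero-mean estimate $\|S_1-S_2\|_\infty\leqslant \tfrac{\sqrt D}{2}\|\na S_1-\na S_2\|_\infty$, and close with Lemma~\ref{Lem:estimPoisson}. The only cosmetic difference is that the paper decomposes $(L_{S_2}-L_{S_1})R_{S_1}\varphi = e^{\beta S_2}\na(S_1-S_2)\cdot\na R_{S_1}\varphi + (e^{\beta(S_2-S_1)}-1)L_{S_1}R_{S_1}\varphi$ and uses $L_{S_1}R_{S_1}\varphi=\Pi\varphi$ directly, thereby avoiding the $\|\Delta R_S\varphi\|_\infty$ bound; since Lemma~\ref{Lem:estimPoisson} supplies that bound anyway, your decomposition works equally well.
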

 \begin{proof}
 From $R_S L_S = \Pi$, 
 \begin{eqnarray*}
 \po R_{S_1} - R_{S_2}\pf L_{S_1} + R_{S_2}\po L_{S_1} - L_{S_2}\pf & = & 0.
 \end{eqnarray*}
 Multiplying this equality by $R_{S_1}$ on the right, and using that $R_S \Pi = R_S$, we get  for all $\varphi \in \mathcal C^\infty(\T^D)$,
  \begin{eqnarray*}
 \po R_{S_1} - R_{S_2}\pf \varphi  & = & R_{S_2}\po L_{S_2} - L_{S_1}\pf R_{S_1} \varphi\,.
 \end{eqnarray*}
 \nv{Thus, from \eqref{EqEstimateQB},
 \begin{eqnarray*}
\lefteqn{\| \po R_{S_1} - R_{S_2}\pf \varphi\|_\infty + \|\na  R_{S_1}\varphi - \na  R_{S_2} \varphi \|_\infty}\\
 & \leqslant & C_3 \|\po L_{S_2} - L_{S_1}\pf R_{S_1} \varphi\|_\infty\\
&\leqslant & C_3 \| e^{\beta S_2} \na (S_1- S_2)\cdot \na R_{S_1} \varphi\|_\infty + C_3 \|( 1 - e^{\beta(S_2-S_1)}) L_{S_1} R_{S_1} \varphi\|_\infty \\
& \leqslant & C_3 e^{\beta C_0}\|\na(S_1-S_2)\|_\infty \|\na R_{S_1}\varphi\|_\infty + C_3 e^{2\beta C_0} \| S_2-S_1\|_\infty \|L_{S_1} R_{S_1} \varphi\|_\infty \,.
 \end{eqnarray*}
Conclusion follows from \eqref{EqEstimateQB}. Indeed, notice that $S_1-S_2$ is a continuous function with integral zero, so that there exist $x\in\mathbb T^D$ such that $(S_1-S_2)(x)=0$, and then for all $y\in \mathbb T^D$
\[|(S_1-S_2)(y)| \leqslant |x-y|\|\na(S_1-S_2)\|_\infty\]
so that $\| S_1-S_2 \| _\infty \leqslant \sqrt D \|\nabla S_1 - \nabla S_2\|_\infty$.
 }

  \end{proof}
  
  \begin{lem}\label{LemQAkQAr}
  There exists $C_5>0$ such that for all $k\geqslant 1$  and $\varphi\in\mathcal C^\infty(\T^D)$
   \begin{eqnarray*}
 \| R_{A_{t_k}}\varphi - R_{A_{t_{k-1}}} \varphi\|_\infty   & \leqslant & \frac{C_5}{k}  \|\varphi\|_\infty.
 \end{eqnarray*}
  \end{lem}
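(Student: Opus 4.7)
The plan is to reduce the lemma to an application of Lemma~\ref{LemQBQbprime} by establishing the quantitative bound $\|\na A_{t_k} - \na A_{t_{k-1}}\|_\infty \leqslant C/k$. Given the update rule of Algorithm~\ref{algo:ABF}, both $A_{t_k}$ and $A_{t_{k-1}}$ are obtained as minimizers of $\widetilde{\mathcal J}_{\nu}$ for two empirical measures $\nu_{t_k}$ and $\nu_{t_{k-1}}$ that differ only through the additional mass accumulated between $t_{k-1}$ and $t_k$. Since this new contribution is of order $T_{up}$ while the total weight $\int_0^{t_k} e^{-\beta A_s(Z_s)}\dd s$ is of order $t_k = k T_{up}$, it is natural to expect a $1/k$ contraction at the level of the empirical measures, and then to transfer this estimate to the minimizers via elliptic regularity.

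More precisely, I would first observe that, writing $w_s = e^{-\beta A_s(Z_s)}$ and $\rho_t = \int_0^t w_s \dd s$, an elementary manipulation gives
\begin{eqnarray*}
\nu_{t_k}(\varphi) - \nu_{t_{k-1}}(\varphi) & = & \frac{1}{\rho_{t_k}} \int_{t_{k-1}}^{t_k} \varphi(Q_s,Z_s) w_s \dd s \ - \ \frac{\rho_{t_k}-\rho_{t_{k-1}}}{\rho_{t_k}} \nu_{t_{k-1}}(\varphi)
\end{eqnarray*}
for every bounded measurable $\varphi$. Since $\|A_t\|_\infty$ is uniformly bounded by Lemma~\ref{LemNaABorneInfini} and \eqref{BorneA_t}, $w_s$ is bounded from above and below by positive constants, so $\rho_{t_k}-\rho_{t_{k-1}}\leqslant C T_{up}$ and $\rho_{t_k}\geqslant c k T_{up}$. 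This yields $|\nu_{t_k}(\varphi)-\nu_{t_{k-1}}(\varphi)|\leqslant C \|\varphi\|_\infty / k$. Applying this bound with $\varphi = K(\cdot,z)$, $\varphi = \na_y V(\cdot,\cdot) K(\cdot,z)$ and their derivatives in $z$ (which are uniformly bounded thanks to the smoothness of $K$ and $V$), I obtain the corresponding estimates on $\theta_{\nu_{t_k}}-\theta_{\nu_{t_{k-1}}}$ and $F_{\nu_{t_k}}-F_{\nu_{t_{k-1}}}$ in $\mathcal C^r(\T^d)$ for any $r\in\N$, with $\mathcal C^r$ norms of order $1/k$.

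The next step is to feed these estimates into the Euler--Lagrange equation \eqref{EqEulerJtilde}. Subtracting the equations at $t_k$ and $t_{k-1}$,
\begin{eqnarray*}
\na\cdot\bigl(\theta_{\nu_{t_k}} \na\po A_{t_k}-A_{t_{k-1}}\pf\bigr) & = & \na\cdot\bigl(\theta_{\nu_{t_k}}F_{\nu_{t_k}} - \theta_{\nu_{t_{k-1}}}F_{\nu_{t_{k-1}}}\bigr) \\
& & -\,\na\cdot\bigl((\theta_{\nu_{t_k}}-\theta_{\nu_{t_{k-1}}})\na A_{t_{k-1}}\bigr)\,,
\end{eqnarray*}
which is a uniformly elliptic equation for $A_{t_k}-A_{t_{k-1}}\in H$ with coefficients and right-hand side controlled as above. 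Using the uniform bounds on all derivatives of $\na A_{t_{k-1}}$ from Lemma~\ref{LemNaABorneInfini}, and running the same energy estimate by induction on the derivatives as in the proof of Lemma~\ref{LemNaABorneInfini} (multiplying by $\partial^\alpha(A_{t_k}-A_{t_{k-1}})$, integrating, and using the lower bound on $\theta_{\nu_{t_k}}$), I obtain uniform $H^s$ control of $\na(A_{t_k}-A_{t_{k-1}})$ by $C_s/k$ for every $s$. Sobolev embedding then yields $\|\na A_{t_k}-\na A_{t_{k-1}}\|_\infty \leqslant C/k$. Since $A_{t_k},A_{t_{k-1}}\in \mathcal B_{C_0}$, Lemma~\ref{LemQBQbprime} applies and concludes. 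The main obstacle is really the first step, namely getting the $1/k$ quantitative control on the increments of $\nu_{t_k}$, but the reweighted definition \eqref{Eq-Empiric-Distrib} together with the uniform bound on $\|A_t\|_\infty$ makes it a short computation.
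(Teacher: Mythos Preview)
Your proposal is correct and follows essentially the same route as the paper: reduce to Lemma~\ref{LemQBQbprime}, show $|\nu_{t_k}(\varphi)-\nu_{t_{k-1}}(\varphi)|\leqslant C\|\varphi\|_\infty/k$ using the uniform bounds on $\|A_t\|_\infty$, deduce $\mathcal C^r$ bounds of order $1/k$ on $\theta_{\nu_{t_k}}-\theta_{\nu_{t_{k-1}}}$ and $F_{\nu_{t_k}}-F_{\nu_{t_{k-1}}}$, subtract the Euler--Lagrange equations, and run the same inductive energy estimate plus Sobolev embedding as in Lemma~\ref{LemNaABorneInfini}. The only cosmetic difference is that the paper writes the increment of the empirical measure as a convex combination $\nu_{t_{k+1}}=(1-p)\nu_{t_k}+pm$ with $p\leqslant C/k$, whereas you expand $\nu_{t_k}(\varphi)-\nu_{t_{k-1}}(\varphi)$ directly; the content is identical.
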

  \begin{proof}
 From Lemma \ref{LemNaABorneInfini}, $A_t \in\mathcal B_{C_0}$ for all $t\geqslant 0$, so that Lemma \ref{LemQBQbprime} applies. It remains to obtain a bound on $\| \na {A_{t_k}} - \na {A_{t_{k-1}}}   \|_\infty$. 
 In this proof, to simplify the notation, we write $\theta_k = \theta_{\nu_{t_{k}}}$ and $F_k = F_{\nu_{t_{k}}}$.  Denoting by
  \[ m := \frac{\int_{t_k}^{t_{k+1}} \delta_{(Q_s,Z_s)} e^{\beta A_s(Z_s)} \dd s } {\int_{t_k}^{t_{k+1}} e^{\beta A_s(Z_s)} \dd s } \quad \mbox{ and }\quad p := \frac{\int_{t_k}^{t_{k+1}} e^{\beta A_s(Z_s)} \dd s } {\int_{0}^{t_{k+1}} e^{\beta A_s(Z_s)} \dd s },   \]
  it holds that
  \begin{eqnarray*}
  \nu_{t_{k+1}} = (1-p) \nu_{t_k} + p m.
  \end{eqnarray*}
  In particular, for some $c,c'>0$, for all $z\in \T^d$ and $k\in\N$,
  \[| \theta_{k+1}(z) - \theta_{k}(z)|\ = \frac{p}{1+\lambda} \left| \int_{(q,y)\in \T^p \times \T^d} K(z,y)(\dd  m (q,y)- \dd \nu_{t_k}(q,y))\right| \ \leqslant \ cp \ \leqslant \  \frac{c'}{k},\]
  where we used that $A_s\in\mathcal B_{C_0}$ for all $s\in[0,t_{k+1}]$. The same argument also works for the derivatives of $\theta_{\nu_t}$, for $F_{\nu_t}$ and its derivatives, so that for any multi-index $\alpha \in \mathbb{N}^d$, there exists a constant $C_\alpha$ such that for all $k\geqslant 1$,
  \[ \| \partial^\alpha F_{k+1} -\partial^\alpha F_{k}\|_\infty + \| \partial^\alpha \theta_{k+1} -\partial^\alpha \theta_{k}\|_\infty \ \leqslant \ \frac{C_\alpha}{k}. \] 
  Now, from the Euler equations satisfied by $A_{t_k}$ and $A_{t_{k+1}}$, we get
  \begin{equation}\label{EqEulerAkAkp1}
  \na\cdot \po \theta_k \na \po A_{t_k}-A_{t_{k+1}}\pf \pf \ = \  \na\cdot \po \na A_{t_{k+1}} \po \theta_{k+1} - \theta_k\pf \pf - \na \cdot \po \theta_{k+1} F_{k+1} - \theta_k F_k\pf.
  \end{equation}
  Multiplying this equation by $A_{t_k}-A_{t_{k+1}}$, integrating and using Lemma \ref{LemNaABorneInfini} and the lower bound on $\theta_{k}$, we get
  \begin{eqnarray*}
  \int_{\T^d} |\na \po A_{t_k}-A_{t_{k+1}}\pf(z)|^2 \dd z & \leqslant & \frac{c}{k^2}
  \end{eqnarray*}
  for some $c>0$. Next, differentiating \eqref{EqEulerAkAkp1}, multiplying it by derivatives of $A_{t_k}-A_{t_{k+1}}$, integrating and using by induction the previous estimates,  we obtain in fact that 
    \begin{eqnarray*}
  \int_{\T^d} |\na \partial^\alpha\po  A_{t_k}-A_{t_{k+1}}\pf(z)|^2 \dd z & \leqslant & \frac{c_\alpha}{k^2}
  \end{eqnarray*}
  for some $c_\alpha>0$ for all $\alpha\in \mathbb{N}^d$, and Sobolev embeddings then yield the conclusion.
  \end{proof}

\subsection{Proof of the main results}

\nv{In this section we denote $\nu(\varphi) = \int_{\T^D} \varphi \dd \nu$ the expectation of an observable $\varphi$ with respect to a probability measure $\nu$.

\begin{proof}[Proof of Proposition \ref{PropNutMu}]
In the following, we use the same notation $C$ for various constants. For all $t\geqslant 0$ and $ \varphi\in \mathcal C^\infty\po\T^D\pf$,  
\begin{eqnarray*}
 \nu_t (\varphi)  - \mu(\varphi)  &= & \frac{1}{\tau(t)} \int_0^t e^{-\beta A_s(Z_s)} \Pi \varphi (X_s) \dd s  \\
 & = & \frac{1}{\tau(t)} \int_0^t e^{-\beta A_s(Z_s)} L_{A_s} R_{A_s} \varphi (X_s) \dd s\,.
\end{eqnarray*}
To alleviate notations, write $\tilde \varphi_s  = R_{A_s} \varphi$. For $k \in \N$ and $t\in [t_k,t_{k+1})$, from \eqref{EqEABF} by It\^o's formula,
\begin{eqnarray*}
\tilde \varphi_t(X_t) - \tilde \varphi_{t_k}(X_{t_k}) & = & \int_{t_k}^t   e^{-\beta A_s(Z_s)} L_{A_s} \tilde \varphi_s (X_s) \dd s +  \sqrt{\frac{2}{\beta }}\int_{t_k}^t \na \tilde \varphi_s (X_s) \dd B_s\,,
\end{eqnarray*}
so that
\begin{eqnarray}\label{eq:*}
\qquad \tau(t) \po  \nu_t (\varphi)  - \mu (\varphi)  \pf  & = & \int_0^t e^{-\beta A_s(Z_s)} L_{A_s} \tilde\varphi_s (X_s) \dd s \ = \ \tilde \varphi_t(X_t) - \tilde \varphi_{0}(X_{0}) \\
& &  + \sum_{0<t_k\leqslant t} \po \tilde \varphi_{t_{k-1}}(X_{t_{k}}) - \tilde \varphi_{t_{k}}(X_{t_{k}})\pf 
 - \sqrt{\frac{2}{\beta }}\int_{0}^t \na \tilde \varphi_s (X_s) \dd B_s\,.\nonumber 
\end{eqnarray}
Recall that, from Lemma~\ref{LemNaABorneInfini}, there exists $C>0$ such that almost surely $\tau(t) \geqslant t/C$ for all $t\geqslant 0$. Together with
 Lemma~\ref{LemQAkQAr},  we get that there exists $C>0$ such that, almost surely, for all $t> 0$ and $ \varphi\in \mathcal C^\infty\po\T^D\pf$,
\begin{equation}\label{eq:**}
 \frac{1}{\tau(t)} \po |\tilde \varphi_t(X_t) - \tilde \varphi_{0}(X_{0}) | + \sum_{0<t_k<t}  | \tilde \varphi_{t_{k}}(X_{t_{k}}) - \tilde \varphi_{t_{k-1}}(X_{t_{k}})|  \pf    \leqslant   \frac{C \ln(1+t)}{t} \|\varphi\|_\infty\,.
\end{equation}
  Moreover, applying It\^o's isometry, 
\begin{eqnarray*}
\mathbb E\po \left| \int_{0}^t \na \tilde \varphi_s (X_s) \dd B_s\right|^2 \pf & = & \mathbb E\po  \int_{0}^t \left| \na \tilde \varphi_s (X_s)\right|^2 \dd  s \pf \\
& \leqslant & t C_3^2 \|\varphi\|_\infty^2 \,,
\end{eqnarray*}
where we used \eqref{EqEstimateQB}. As a consequence, there exists $C>0$ such that 
\begin{eqnarray}\label{eq:***}
\mathbb E \po \left|  \nu_t ( \varphi)  - \mu(\varphi)  \right|^2 \pf  & \leqslant& \frac{C }{t} \|\varphi\|_\infty^2
\end{eqnarray}
for all $t> 0$ and $ \varphi\in \mathcal C^\infty\po\T^D\pf$. As in the proof of \cite[Lemma 5.1]{BenaimBrehier2019}, this implies the almost sure weak convergence of $\nu_t$ to $\mu$ as follows. Indeed, for all $r>0$, the Borel-Cantelli Lemma yields the almost sure  convergence of $\nu_{\exp(nr)} (\varphi)$ toward $\mu(\varphi)$ as $n\rightarrow +\infty$, $n\in\N$, so that
\begin{equation}\label{eq:BC}
\mathbb P \po \forall k\in \mathbb N_*,\ \nu_{\exp(n/k)} (\varphi) \underset{n\rightarrow +\infty}\longrightarrow \mu(\varphi)\pf \ = \ 1\,.
\end{equation}
 Moreover, using the almost sure bounds $t/C \leqslant \tau(t) \leqslant Ct$  and $ \|\exp(-\beta A_s)\|_\infty \leqslant C $ for some $C>0$, we get that  there exists $C,C'>0$ such that for all $t\geqslant s>0$,
\begin{eqnarray*}
|\nu_t (\varphi) - \nu_s (\varphi) | & \leqslant & C\po \left|\frac{1}{\tau(t)} - \frac{1}{\tau(s)}\right| s  + \frac{  |t-s|}{\tau(t)}\pf \|\varphi\|_\infty \\ 
& \leqslant & C\po  \frac{|\tau(t) - \tau(s)|s}{\tau(t)\tau(s)}  + \frac{  |t-s|}{\tau(t)}\pf \|\varphi\|_\infty \\
& \leqslant &  C'\frac{|t-s|}{t}\|\varphi\|_\infty\,.
\end{eqnarray*} 
As a consequence, for all $ \varphi\in \mathcal C^\infty\po\T^D\pf$, $t\mapsto \nu_{\exp(t)}(\varphi)$ is almost surely $C'\|\varphi\|_\infty$-Lipschitz, and in particular
\begin{equation}\label{eq:BC2}
\mathbb P \po \forall k\in \mathbb N_*,\forall t\geqslant 0,\ |\nu_{\exp(t)}(\varphi)-\nu_{\exp(\lfloor tk\rfloor/k)}(\varphi)|\leqslant \frac{C'\|\varphi\|_\infty}{k}\pf \ = \ 1\,.
\end{equation}
The almost sure convergence of $\nu_t (\varphi) $ to $\mu (\varphi)$ for a given $\varphi$  then follows from  
\begin{multline*}
\left\{ \nu_t (\varphi) \underset{t\rightarrow +\infty}\longrightarrow \mu(\varphi)\right\} \ = \   \bigcap_{k\in\N_*} \left\{\limsup_{t\rightarrow\infty}| \nu_{\exp(t)} - \mu(\varphi)| \leqslant \frac{C'\|\varphi\|_\infty}{k}\right\}  \\
\  \supset \ \bigcap_{k\in\N_*}\po  \left\{\sup_{t\geqslant 0}|\nu_{\exp(t)}(\varphi)-\nu_{\exp(\lfloor tk\rfloor/k)}(\varphi)|\leqslant \frac{C'\|\varphi\|_\infty}{k}\right\} \cap \left\{ \nu_{\exp(\lfloor tk\rfloor/k)} \underset{t\rightarrow +\infty}\longrightarrow \mu(\varphi) \right\} \pf \,,
\end{multline*}
the last event having probability $1$ from \eqref{eq:BC} and \eqref{eq:BC2}. 
 Considering a sequence $(\varphi_k)_{k\in \N}$ of $\mathcal C^\infty\po\T^D\pf$ functions that is dense in $\mathcal C\po\T^D\pf$, we get that
\[\mathbb P \po \nu_t (\varphi_k) \underset{t\rightarrow +\infty}\longrightarrow \mu( \varphi_k) \ \forall k \in \N\pf \ = \ 1\,,\]
so that almost surely $\nu_t$ converges weakly to $\mu$ as $t\rightarrow +\infty$. This concludes the proof of Proposition \ref{PropNutMu}, hence of Theorem \ref{TheoremTempsLong}.
\end{proof}

\begin{proof}[Proof of Theorem~\ref{Theorem:variance_asymptotique}]

The first claim of the  theorem  has already been established in the proof of Proposition~\ref{PropNutMu}, see \eqref{eq:***}. Fix $\varphi\in \mathcal C^\infty \po \T^D\pf$. We have seen in the proof of Proposition~\ref{PropNutMu} (see \eqref{eq:*} and \eqref{eq:**}) that, denoting  $\tilde \varphi_s = R_{A_s}\varphi$,
\begin{eqnarray*}
   \nu_t (\varphi)  - \mu(\varphi)     & = & \varepsilon_t   -\frac1{\tau(t)} \sqrt{\frac{2}{\beta }}\int_{0}^t \na \tilde \varphi_s (X_s) \dd B_s
\end{eqnarray*}
for some $\varepsilon_t$ such that almost surely $|\varepsilon_t|\leqslant C\ln(1+t)/t$ for all $t>0$ for some $C>0$. The martingale part having zero expectation, the bias is bounded as
 \[\left| \mathbb E (\nu_t (\varphi)  - \mu(\varphi)  ) \right|^2 = \left|\mathbb E(\varepsilon_t)\right|^2 \leqslant \frac{C^2\ln^2(1+t)}{t^2} = \underset{t\rightarrow +\infty} o\po \frac1t\pf\,.\]
 In other words, the asymptotic mean-square error is only due to the asymptotic variance, which is itself only due to the martingale part of  $ \nu_t (\varphi)  - \mu(\varphi) $.

 Remark that, as a corollary of Theorem~\ref{TheoremTempsLong}, $\|A_t - A_*\|_\infty \rightarrow 0$ almost surely. Together with the uniform bounds of Lemma~\ref{LemNaABorneInfini}  and the weak convergence of $\nu_t$ to $\mu$, we get that
 \begin{eqnarray*}
 \frac{t}{\tau(t)} \  = \ \frac{t-\sqrt t}{\tau(t)} + \underset{t\rightarrow +\infty} o\po 1\pf
  & = &  \frac{1}{\tau(t)}\int_{\sqrt t}^t e^{-\beta A_*(Z_s)}  e^{ \beta A_*(Z_s)} \dd s  + \underset{t\rightarrow +\infty} o\po 1\pf\\
    & = &  \frac{1}{\tau(t)}\int_{\sqrt t}^t e^{-\beta A_s(Z_s)}  e^{ \beta A_*(Z_s)} \dd s  + \underset{t\rightarrow +\infty} o\po 1\pf\\
 & = &  \frac{1}{\tau(t)}\int_0^t e^{-\beta A_s(Z_s)}  e^{ \beta A_*(Z_s)} \dd s  + \underset{t\rightarrow +\infty} o\po 1\pf\\
 & \underset{t\rightarrow +\infty}\longrightarrow & \mu \po e^{\beta A_* \circ \xi} \pf  \ =:\ \kappa
 \end{eqnarray*}
 almost surely. As a consequence,
 \begin{eqnarray*}
 t \mathbb E\po |\nu_t (\varphi)  - \mu(\varphi)  |^2\pf & = & \frac{2t}{\beta}   \mathbb E\po  \frac{1}{\tau^2(t)}\left|\int_{0}^t \na \tilde \varphi_s (X_s) \dd B_s\right|^2\pf  + \underset{t\rightarrow +\infty} o\po 1\pf\\
 & = & \frac{2\kappa^2}{\beta t}   \mathbb E\po   \left|\int_{0}^t \na \tilde \varphi_s (X_s) \dd B_s\right|^2\pf  + \underset{t\rightarrow +\infty} o\po 1\pf \\
 & = & \frac{2\kappa^2}{\beta t}   \int_0^t \mathbb E\po    \left| \na \tilde \varphi_s (X_s)\right|^2\pf  \dd s + \underset{t\rightarrow +\infty} o\po 1\pf \,.
  \end{eqnarray*}
  From Lemma~\ref{LemQBQbprime}, $\|\na \tilde \varphi_s - \na R_{A_*} \varphi\|_\infty \leqslant C_4 \|\na A_s - \na A_*\|_\infty \|\varphi\|_\infty$, which together with Theorem~\ref{TheoremTempsLong} and \eqref{EqEstimateQB} yields
   \begin{eqnarray*}
 t \mathbb E\po |\nu_t (\varphi)  - \mu(\varphi)  |^2\pf 
 & = & \frac{2\kappa^2}{\beta t}   \int_0^t \mathbb E\po    \left| \na   R_{A_*} \varphi (X_s)\right|^2\pf  \dd s + \underset{t\rightarrow +\infty} o\po 1\pf \\
 & = & \frac{2\kappa^2}{\beta t}  \mathbb E\po  \int_0^t    \left| \na   R_{A_*} \varphi (X_s)\right|^2 e^{-\beta (A_s(Z_s)-A_*(Z_s))}  \dd s \pf + \underset{t\rightarrow +\infty} o\po 1\pf \\
  & = & \frac{2\kappa }{\beta  }  \mathbb E\po \nu_t \po     \left| \na   R_{A_*} \varphi  \right|^2 e^{\beta A_* \circ \xi} \pf \pf + \underset{t\rightarrow +\infty} o\po 1\pf \\
 & \underset{t\rightarrow +\infty} \longrightarrow & \frac{2\kappa }{\beta  } \mu \po     \left| \na   R_{A_*} \varphi  \right|^2 e^{\beta A_* \circ \xi} \pf\,.
  \end{eqnarray*}
  In other words, the asymptotic variance is 
  \[\frac{2 }{\beta  } \mu  \po e^{\beta A_*\circ \xi } \left| \na   \psi \right|^2\pf  \mu  \po e^{\beta A_*\circ \xi } \pf \]
  where $\psi = R_{A_*} \varphi $ solves $L_{A_*}\psi = \Pi\varphi$, which reads
  \[\po \frac1\beta \Delta  - \na \po V- A_*\circ\xi \pf \na\pf \psi \ = \ e^{-\beta A_*\circ\xi}\Pi\varphi\,.\]
  
  It remains to see that we get the same formula for the asymptotic variance of $\sqrt t \po \nu_t^*(\varphi) - \mu(\varphi)\pf$. The computations are similar, so we only sketch the main points. Denoting $\tau_*(t) = \int_0^t \exp(-\beta A_*(Z_s^*))\dd s$, as in Proposition~\ref{PropNutMu},
  \begin{eqnarray*}
\tau_*(t)\po  \nu_t^*( \varphi)  - \mu(\varphi)\pf   &= &  \int_0^t e^{-\beta A_*(Z_s^*)} \Pi \varphi (X_s^*) \dd s  \\
 & = &   \int_0^t e^{-\beta A_*(Z_s^*)} L_{A_*} R_{A_*} \varphi (X_s^*) \dd s\\
 &= & R_{A_*} \varphi(X_t^*) - R_{A_*} \varphi (X_{0}^*)  - \sqrt{\frac{2}{\beta }}\int_{0}^t \na R_{A_*} \varphi(X_s) \dd B_s\,.
\end{eqnarray*}
Again, from the almost sure bound $\tau_*(t)\geqslant t/C$ for some $C>0$, It\^o's isometry and the bounds on $R_{A_*} \varphi$ given by \eqref{EqEstimateQB}, we get that there exists $C>0$ such that for all $t>0$ and $\varphi \in \mathcal C(\T^D)$,
\begin{eqnarray*}
\mathbb E \po \left|  \nu_t^*\po \varphi\pf  - \mu\po \varphi\pf  \right|^2 \pf  & \leqslant& \frac{C }{t} \|\varphi\|_\infty^2\,.
\end{eqnarray*}
The proof that this implies the almost sure weak convergence of $\nu_t^*$ to $\mu$ is similar to the end of the proof of Proposition~\ref{PropNutMu}. As a corollary,
\[\frac{t}{\tau_*(t)} \  = \ \nu_t^* \po e^{\beta A_*\circ \xi} \pf \ \underset{t\rightarrow +\infty}\longrightarrow \ \kappa\,. \]
Finally,
 \begin{eqnarray*}
 t \mathbb E\po |\nu_t (\varphi)  - \mu(\varphi)  |^2\pf & = & \frac{2t}{\beta}   \mathbb E\po  \frac{1}{\tau^2_*(t)}\left|\int_{0}^t \na R_{A_*} \varphi (X_s^*) \dd B_s\right|^2\pf  + \underset{t\rightarrow +\infty} o\po 1\pf\\
 & = & \frac{2\kappa^2}{\beta t}   \mathbb E\po   \left|\int_{0}^t \na R_{A_*} \varphi(X_s^*) \dd B_s\right|^2\pf  + \underset{t\rightarrow +\infty} o\po 1\pf \\
 & = & \frac{2\kappa^2}{\beta t}   \int_0^t \mathbb E\po    \left| \na R_{A_*} \varphi(X_s^*)\right|^2\pf  \dd s + \underset{t\rightarrow +\infty} o\po 1\pf \\
 & = & \frac{2\kappa^2}{\beta t}     \mathbb E\po  \tau_*(t) \nu_t^*\po    \left| \na R_{A_*} \varphi \right|^2 e^{\beta A_* \circ \xi}\pf \pf  \dd s + \underset{t\rightarrow +\infty} o\po 1\pf \\
  & \underset{t\rightarrow +\infty} \longrightarrow & \frac{2\kappa }{\beta  } \mu \po     \left| \na   R_{A_*} \varphi  \right|^2 e^{\beta A_* \circ \xi} \pf\,.
  \end{eqnarray*}

\end{proof}
}

\section{Consistency of the tensor approximation}\label{SectionTensor}

This section is devoted to the proof of Propositions \ref{prop:minimJt} and \ref{prop:minJtSigmai} and Theorem~\ref{ThmTens}. In all this section, Assumption \ref{Hypo} holds and we write $\theta = \theta_\nu$, $F=F_\nu$, $\mathcal J = \mathcal J_\nu$ for some fixed $\nu \in \mathcal P(\T^p \times \T^d)$.
Recall that $F,\theta\in\mathcal C^\infty(\T^d)$, that $\theta$ is a positive probability density on $\T^d$, and that the minimizers of $\J$ in $H$ are exactly the minimizers of $\widetilde{\mathcal J}$ in $H$, where for all $f\in H$, 
\[\widetilde{\mathcal J}(f) = \int_{\T^d} | F(z) -\na f(z)|^2 \theta(z)\dd z \,,\]
the link between $\mathcal J$ and $\widetilde{\mathcal J}$  begin given by \eqref{Eq:LienJ-Jtilde}. 

Since $\theta$ is bounded from above and below by positive constants,   the weighted spaces $L^2(\T^d; \theta)$ and $H^1(\T^d; \theta)$ are equal to the flat spaces $L^2(\T^d; \dd z)$ and $H^1(\T^d; \dd z)$. We endow $H$ (whose definition is given in (\ref{eq:defH}),
with the  norm
\[\| f\| = \sqrt{(1+\lambda)\int_{\T^d} |\na f(z)|^2 \theta(z)\dd z},\]
which is indeed a norm, equivalent to the usual $H^1$ norm from the Poincar\'e-Wirtinger inequality: there exists $C>0$ such that for all $f\in H$,  
\begin{eqnarray*}
\int_{\T^d} f^2(z) \theta(z) \dd z &\leqslant& \|\theta\|_\infty \int_{\T^d} f^2(z)  \dd z\\
& \leqslant& C\|\theta\|_\infty \int_{\T^d} |\na f(z)|^2  \dd z \\
& \leqslant& C\|\theta\|_\infty\|\theta^{-1}\|_\infty \int_{\T^d} |\na f(z)|^2 \theta(z) \dd z.
\end{eqnarray*}
 The  scalar product associated with $\| \cdot\|$ is denoted by $\langle\cdot\rangle$. The choice of such a norm is motivated by the fact that, denoting by $\mathcal J'$ the differential of $\J$, then for all $f,g\in H$,
\begin{eqnarray*}
\J(f) & = & \J(0) + \J'(0)\cdot f + \| f\|^2\\
\J'(f) \cdot g & = &  \J'(0)\cdot g +  2\langle f,g\rangle \,.
\end{eqnarray*}
Proposition \ref{prop:minimJt} is then a direct consequence of the strict convexity of $\J$. The unique minimizer $f_*$ of $\J$ over $H$ \nv{being a minimizer of $\widetilde{\mathcal J}$, it satisfies $\widetilde{\mathcal J}'(f_*)=0$, which reads}  
\[\forall g\in H\,,\qquad \int F(z)\cdot \na g(z) \theta(z) \dd z \ = \ \int \na f_*(z) \cdot \na g(z) \theta(z) \dd z \]
\nv{Moreover, using that $\mathcal J'(f_*)=0$, we get that $\J'(0)\cdot g = -  2\langle f_*,g\rangle $ for all $g\in H$ and then $ \J(0) = \J(f_*) -\J'(0)\cdot f_* -\| f_*\|^2 = \J(f_*)+\|f_*\|^2$. As a consequence, for all $g\in H$,
\begin{eqnarray}\label{EqJg}
   \J(g) & = & \J(0) + \J'(0)\cdot g + \| g\|^2\ = \  \J(f_*) + \| f_* - g\|^2\label{EqJg}\,.
\end{eqnarray}
}

\begin{proof}[Proof of Proposition \ref{prop:minJtSigmai}]
Let $f\in H$ and $i\in\cco 1,d\ccf$. If $\mathcal J\po  f \pf= {\inf} \{\mathcal J\po  f  + g\pf,\ g\in \Sigma_{i}\}$ then the result is correct since $0\in\Sigma_i$ is a minimizer over $\Sigma_i$. Suppose now that $0$ is not a minimizer, 
i.e. that $\mathcal J\po  f \pf > {\inf} \{\mathcal J\po  f  + g\pf,\ g\in \Sigma_{i}\}$ and consider a minimizing sequence $(g^{(l)})_{l\in \mathbb{N}}$ in $\Sigma_i$ 
such that $\mathcal J\po  f  + g^{(l)}\pf$ converges to ${\inf} \{\mathcal J\po  f  + g\pf,\ g\in \Sigma_{i}\}$ as $l$ goes to infinity. For $l$ large enough, 
$\mathcal J\po  f  + g^{(l)}\pf  < \mathcal J\po  f  \pf $ so that $g^{(l)}\neq 0$, and thus up to an extraction we suppose that $g^{(l)}\neq 0$ for all $l\in\N$. 
Moreover the sequence is bounded in $H^1$ and thus, up to the extraction of a subsequence, we suppose that it weakly converges in $H^1$ to some $g^*\in H$. The function $H \ni g\mapsto \mathcal J\po  f  + g\pf$ being convex on $H$, 
it is weakly lower semi-continuous, so that 
\begin{eqnarray*}
\mathcal J\po g_* \pf & \leqslant & \underset{g\in \Sigma_{i}}{\inf} \mathcal J\po  f  + g\pf.
\end{eqnarray*}
For all $l\in \mathbb{N}$, there exist $r_1^{(l)}, \cdots, r_d^{(l)} \in H^1(\T)$ such that $g^{(l)} = \bigotimes_{j=1}^d r_{j}^{(l)}$. Since $g^{(l)}\neq 0$,
we can normalize the $r_j^{(l)}$'s so that $\| r_j^{(l)}\|_{L^2(\T)} = 1$ for all $j\neq i$ and $l\in \mathbb N$. As a consequence, up to the extraction of a subsequence, 
for all $1\leq j \neq i \leq d$, there exists $r_j^{*} \in L^2(\T)$ such that the sequence $(r_j^{(l)})_{l\in \mathbb{N}}$ weakly converges to $r_j^*$ in $L^2(\T)$. 
Now, since the sequence $g^{(l)}$ is bounded in $H$ and
  \begin{eqnarray*}
\| \na g^{(l)}\|_{L^2(\T^d)}^2 & = & \sum_{j=1}^d \| \partial_{z_j} r_j^{(l)} \|_{L^2(\T)}^2 \prod_{h\neq j} \| r_h^{(l)}\|_{L^2(\T)}^2,
\end{eqnarray*} 
we get that the sequence $(r_i^{(l)})_{l\in\mathbb{N}}$ is bounded in $H^1(\T) $ (since $\int_\T r_i^{(l)} = 0$ for all $l\in \mathbb{N}$). Thus, up to the extraction of another subsequence, 
the sequence $(r_i^{(l)})_{l\in\mathbb{N}}$ weakly converges in $H^1(\T)$ to some $r_i^{*}\in H^1(\T)$ such that $\int_\T r_i^* = 0$. 
From \cite[Lemma 2]{LBLM}, $(g^{(l)})_{l\in \mathbb{N}}$ converges in the distributional sense to $ \bigotimes_{j=1}^d r_j^*$. Thus, $g^* = \bigotimes_{j=1}^d r_j^*$ and since $g^* \neq 0$, this implies that for all 
$1\leq j \leq d$, $r_j^*\neq 0$. Finally, 
since 
$$
\| \na g^*\|_{L^2(\T^d)}^2  =  \sum_{j=1}^d \| \partial_{z_j} r_j^* \|_{L^2(\T)}^2 \prod_{h\neq j} \| r_h^*\|_{L^2(\T)}^2
$$
is a finite quantity, this implies that for all $1\leq j \neq i \leq d$, 
$$
\| \partial_{z_j} r_j^* \|_{L^2(\T)} \leq \frac{\| \na g^*\|_{L^2(\T^d)}^2}{\prod_{h\neq j} \| r_h^*\|_{L^2(\T)}^2} < +\infty,
$$
and thus $r_j^* \in H^1(\T)$. This implies that $g_*\in \Sigma_i$ and yields the desired result. 
\end{proof}

\begin{remark}
 The problem would be ill-posed if we were to try and minimize $\J(f  + g - \int_{\T^d} g)$ over all
 $g \in \Sigma:= \left\{ r_1\otimes \cdots \otimes r_d, \; r_j\in H^1(\T) \mbox{ for all } 1\leq j \leq d \right\}$. This is the reason why we introduced the condition that one of the $r_j$'s has zero mean. 
 Indeed, consider the situation where $d=2$, $f=0$ and $F(z) = \po a'(z_1),b'(z_2)\pf$ for some smooth functions $a,b: \T \to \mathbb{R}$ with zero mean. 
 Then, the minimum of $\J$ over $H$ is $0$, and only attained at $f^*(z_1,z_2) = a(z_1)+b(z_2)$, which is not of the form $g-\int_{\T^d} g$ for some $g\in \Sigma$. Nevertheless, the sequence $(g^{(l)})_{l\in\mathbb{N}^*}$ defined by:
 for all $l\in \mathbb{N}^*$, $g^{(l)} = r_1^{(l)} r_2^{(l)}$ with
\[ r_1^{(l)}(z_1) = 1 + \frac{a(z_1)}{l},\hspace{25pt} r_2^{(l)}(z_2) = l + b(z_2)\] 
is a minimizing sequence. Indeed, the sequence $\left(g^{(l)} - \int_{\T^d} g^{(l)}\right)_{l\in \mathbb{N}^*}$ weakly converges to $f^*$. 
In other words, the set $\left\{g - \int_{\T^d} g,\ g\in \Sigma \right\}$ is not weakly closed in $H$.
\end{remark}

\bigskip

Proposition \ref{prop:minJtSigmai} proves that all the iterations of Algorithm~\ref{algo:tenseur} are well-defined. 
In the following, we consider a sequence $(f_n)_{n\in\N}$ given by the latter and $g_n = f_{n+1}-f_n$ for $n\in\N$. The general idea of the 
proof of Theorem \ref{ThmTens} is that, if the sequence $(f_n)_{n\in\mathbb{N}}$ converges to some $f_\infty$ in $H$, it holds that 
$\J'(f_\infty)\cdot g = 0$ for all $g\in\cup_{i=1}^d\Sigma_i$, and a density argument enables to conclude. Nevertheless, remark that 
$\Sigma_i$ is not a vector space and that its elements all have null integral, so that one should be careful. In the following,  we essentially adapt the arguments of \cite{CancesEhrlacherLelievre2011}.

\begin{lem}\label{LemControlNorme}
For all $g\in \cup_{i=1}^d \Sigma_i$ and $n\in\N$,
\begin{eqnarray*}
\left| \J'(f_{n}) \cdot g \right| & \leqslant & 6  \| g\| \sum_{j=0}^{d-1} \| g_{n+j}\|.
\end{eqnarray*}
\end{lem}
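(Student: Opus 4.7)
The plan is to exploit two structural facts: first, that by the cyclic nature of the inner loop in Algorithm~\ref{algo:tenseur}, the indices $i_n, i_{n+1}, \dots, i_{n+d-1}$ (the value of $i$ used at each iteration) cover every element of $\cco 1,d\ccf$; second, that each $\Sigma_i$ is a cone (closed under scalar multiplication, since scaling one factor of a simple tensor preserves the zero-integral condition). Given $g\in\Sigma_i$, I will pick $j_0\in\cco 0,d-1\ccf$ with $i_{n+j_0}=i$, obtain a sharp bound for $|\J'(f_{n+j_0})\cdot g|$ via the optimality of $g_{n+j_0}$ on $\Sigma_i$, and then bridge back to $f_n$ through a telescoping sum.

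First, from~\eqref{EqJg} one has $\J'(h)\cdot \tilde g = -2\langle f_*-h,\tilde g\rangle$ for every $h,\tilde g\in H$. Applying the optimality of $g_{n+j_0}$ along the ray $\R g_{n+j_0}\subset \Sigma_{i}$ and minimizing the one-variable quadratic $t\mapsto \J(f_{n+j_0}+tg_{n+j_0}) = \J(f_{n+j_0}) + t\,\J'(f_{n+j_0})\cdot g_{n+j_0} + t^2\|g_{n+j_0}\|^2$ at $t=1$, I obtain the first-order identity $\J'(f_{n+j_0})\cdot g_{n+j_0} = -2\|g_{n+j_0}\|^2$, whence
\[
\J(f_{n+j_0+1}) \ =\ \J(f_{n+j_0}) - \|g_{n+j_0}\|^2.
\]
(The degenerate case $g_{n+j_0}=0$ is handled separately: then $\J'(f_{n+j_0})\cdot g=0$ for all $g\in\Sigma_i$ by the same conic argument, and the bound is trivial.)

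Next, since $\R g\subset \Sigma_i$, the optimality of $g_{n+j_0}$ gives $\J(f_{n+j_0}+tg)\geqslant \J(f_{n+j_0+1})$ for every $t\in\R$. Minimizing the left-hand side over $t$ yields $\J(f_{n+j_0}) - (\J'(f_{n+j_0})\cdot g)^2/(4\|g\|^2) \geqslant \J(f_{n+j_0+1})$, and combining with the identity of the previous step gives the key estimate
\[
|\J'(f_{n+j_0})\cdot g| \ \leqslant\ 2\|g\|\,\|g_{n+j_0}\|.
\]

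Finally, to bridge from $f_{n+j_0}$ to $f_n$, I use
\[
\J'(f_n)\cdot g - \J'(f_{n+j_0})\cdot g \ =\ 2\langle f_{n+j_0}-f_n,g\rangle \ =\ 2\Big\langle \sum_{j=0}^{j_0-1} g_{n+j},\,g\Big\rangle,
\]
which by Cauchy--Schwarz is bounded by $2\|g\|\sum_{j=0}^{j_0-1}\|g_{n+j}\|$. A triangle inequality combines the two estimates into
\[
|\J'(f_n)\cdot g|\ \leqslant\ 2\|g\|\sum_{j=0}^{j_0}\|g_{n+j}\|\ \leqslant\ 2\|g\|\sum_{j=0}^{d-1}\|g_{n+j}\|,
\]
which is sharper than the claim (constant $2$ rather than $6$, but $2\leqslant 6$). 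The only genuine point of care is to make sure the cyclic pattern of $(i_n)$ indeed hits every index of $\cco 1,d\ccf$ within any window of length $d$, and to treat the degenerate case $g_{n+j_0}=0$; no serious technical obstacle arises beyond this.
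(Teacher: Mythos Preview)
Your proof is correct and in fact sharper than the paper's, yielding constant $2$ instead of $6$. Both arguments share the same overall architecture: pick the index $n_i\in\cco n,n+d-1\ccf$ at which the algorithm works in $\Sigma_i$, bound $|\J'(f_{n_i})\cdot g|$ using the optimality of $g_{n_i}$, and telescope back to $f_n$ via $\J'(f_n)\cdot g-\J'(f_{n_i})\cdot g=2\langle f_{n_i}-f_n,g\rangle$. The difference lies in the middle step. The paper (adapting \cite{CancesEhrlacherLelievre2011}) uses convexity of $t\mapsto\J(f_{n_i}+g+t(g_{n_i}-g))$ on $[0,1]$ together with $\J'(f_{n_i}+g_{n_i})\cdot g_{n_i}=0$; this produces the one-sided inequality $-\J'(f_{n_i})\cdot g\leqslant 2(\|g_{n_i}\|^2+\|g\|\|g_{n_i}\|+\|g\|^2)$, which is then made two-sided and homogeneous by evaluating at $\tilde g=\pm\|g_{n_i}\|\,g/\|g\|$, yielding the constant $6$. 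You instead exploit the cone property $\R g\subset\Sigma_i$ directly: minimizing the explicit quadratic $t\mapsto\J(f_{n_i}+tg)$ and comparing to $\J(f_{n_i+1})=\J(f_{n_i})-\|g_{n_i}\|^2$ gives $|\J'(f_{n_i})\cdot g|\leqslant 2\|g\|\|g_{n_i}\|$ at once. Your route is shorter and loses less; the paper's convexity-along-a-segment argument would remain applicable in settings where the dictionary is not a cone, but here that extra generality is not needed.
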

\begin{proof}
Let $i\in\cco 1,d\ccf$ be such that $g\in\Sigma_i$. Let $n_i\in \cco n,n+d-1\ccf$ be such that $g_{n_i}\in\Sigma_i$. We bound, first,
\begin{eqnarray*}
\left| \J'(f_{n}) \cdot g \right| & \leqslant & \left| \J'(f_{n_i}) \cdot g  \right| +2\left|  \langle g,f_{n_i}-f_{n}\rangle\right|.
\end{eqnarray*}
The second term of the right hand side is bounded by  $2\| g\| \sum_{j=n}^{n_i-1} \| g_{j}\|$. To deal with the first one, note that, even though $g_{n_i}$ is a minimizer of $\mathcal J(f_{n_i} + \cdot)$ over $\Sigma_{i}$,
it is not necessarily true that $\J'(f_{n_i}+g_{n_i})\cdot g = 0$, since $\Sigma_{i}$ is not a vector space. 
 We follow the proof of \cite[Proposition 3.3]{CancesEhrlacherLelievre2011}. 
 By convexity of
\[t\in \R\ \mapsto\ \psi(t) \ :=\ \J\po  f_{n_i} + g + t(g_{n_i}-g)\pf,\]
and since $t=0$ minimizes $\psi(t)$, we get
\[ \psi'(0) \ \leqslant\ \psi(1)-\psi(0) \ \leqslant\ 0,\]
which reads
\begin{eqnarray*}
\J'\po  f_{n_i} + g \pf \cdot g  & \geqslant & \J'\po  f_{n_i} + g \pf \cdot g_{n_i}.
\end{eqnarray*}
Hence, 
\begin{eqnarray*}
-\J'(f_{n_i}) \cdot g     & = &  - \J'(f_{n_i}+g) \cdot g     + 2 \| g\|^2 \\
& \leqslant & - \J'(f_{n_i}+g) \cdot g_{n_i}     + 2 \| g\|^2 \\
& \leqslant & - \J'(f_{n_i}+g_{n_i}) \cdot g_{n_i}  + 2 \langle g_{n_i} ,g_{n_i}-g \rangle   +2 \| g\|^2.  
\end{eqnarray*}
Now, $1$ being a minimizer over $\R$ of $ t\mapsto \J(f_{n_i}+tg_{n_i})$,  $\J'(f_{n_i}+g_{n_i}) \cdot g_{n_i}  = 0$, so that
\begin{eqnarray*}
-\J'(f_{n_i}) \cdot g     & \leqslant &  2   \po \| g_{n_i} \|^2 +   \| g\| \| g_{n_i}\| + \| g\|^2 \pf.
\end{eqnarray*}
When applied to $\widetilde g = \pm \|g_{n_i}\|   g / \|g\|$, this inequality yields
\begin{eqnarray*}
\left| \J'(f_{n_i}) \cdot g \right|  & \leqslant & 6  \| g \| \| g_{n_i}\|,
\end{eqnarray*}
which concludes the proof.
\end{proof}

\begin{prop}
Let $f_*$ be the unique minimizer of $\J$ over $H$. Then
\begin{eqnarray*}
\| f_n - f_*\| & \underset{n\rightarrow \infty}\longrightarrow & 0.
\end{eqnarray*}
\end{prop}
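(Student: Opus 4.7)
The plan is to adapt the convergence proof of greedy tensor algorithms from \cite{CancesEhrlacherLelievre2011}, using crucially the quadratic identity \eqref{EqJg}, $\J(f) = \J(f_*) + \|f-f_*\|^2$, which links the squared distance to the minimizer with the energy gap.

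First I would establish monotonicity and summability. Since $tg_n \in \Sigma_{i(n)}$ for every $t\in\R$ (one absorbs $t$ into any factor of $g_n$), the optimality of $g_n$ over $\Sigma_{i(n)}$ implies that $t=1$ minimizes $t\mapsto\J(f_n+tg_n)$, hence $\J'(f_{n+1})\cdot g_n = 0$. Together with the relation $\J'(f_{n+1})\cdot g_n = \J'(f_n)\cdot g_n + 2\|g_n\|^2$, this gives $\J'(f_n)\cdot g_n = -2\|g_n\|^2$, and the quadratic expansion yields $\J(f_{n+1}) = \J(f_n) - \|g_n\|^2$, i.e. $\|f_{n+1}-f_*\|^2 = \|f_n-f_*\|^2 - \|g_n\|^2$. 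Consequently $(\|f_n-f_*\|)$ is non-increasing, $(f_n)$ is bounded in $H$, and $\sum_n \|g_n\|^2 \leq \|f_0-f_*\|^2$, so $\|g_n\|\to 0$. By weak compactness some subsequence $(f_{n_k})$ converges weakly to some $f_\infty\in H$.

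Next I would identify $f_\infty = f_*$. From Lemma~\ref{LemControlNorme} and $\|g_n\|\to 0$, for each fixed $g \in \cup_{i=1}^d \Sigma_i$,
\[ |\J'(f_n)\cdot g| \leq 6\|g\|\sum_{j=0}^{d-1}\|g_{n+j}\| \leq 6\sqrt{d}\,\|g\|\Bigl(\sum_{k\geq n}\|g_k\|^2\Bigr)^{1/2} \underset{n\rightarrow\infty}\longrightarrow 0\,. \]
Since $\J'(f)\cdot g = \J'(0)\cdot g + 2\langle f,g\rangle$ depends weakly continuously on $f$, passing to the limit along $(n_k)$ yields $\J'(f_\infty)\cdot g = 0$ for every $g \in \cup_{i=1}^d \Sigma_i$. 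To extend this to all of $H$, I would prove that $\mathrm{span}(\cup_{i=1}^d \Sigma_i)$ is dense in $H$: zero-mean trigonometric polynomials are dense in $H$, and each real Fourier mode $\cos(n\cdot z)$ or $\sin(n\cdot z)$ with $n\neq 0$ expands via product-to-sum identities into a finite sum of tensor products $\prod_{j=1}^d \phi_j(z_j)$, where each $\phi_j$ is either $\cos(n_j z_j)$ or $\sin(n_j z_j)$; picking any $j_0$ with $n_{j_0}\neq 0$, the $j_0$-th factor has zero integral, so the tensor lies in $\Sigma_{j_0}$. Hence $\J'(f_\infty) = 0$ on $H$, and by strict convexity of $\J$, $f_\infty = f_*$. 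Since every weak accumulation point of $(f_n)$ equals $f_*$, the whole sequence $(f_n)$ converges weakly to $f_*$.

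The main obstacle, and the step most specific to the greedy algorithm, is promoting weak to strong convergence. In an infinite-dimensional Hilbert space, weak convergence combined with monotonicity of $\|f_n-f_*\|$ is not sufficient (as the example $e_n\rightharpoonup 0$ in $\ell^2$ with $\|e_n\|=1$ shows), so one must exploit the greedy structure further. Equivalently, I must show $\J(f_n)\to\J(f_*)$. For this, given $\varepsilon>0$, the plan is to use density to pick $\tilde f = \sum_{k=1}^K h_k$ with $h_k\in \Sigma_{j_k}$ for some indices $j_k$ and $\J(\tilde f)<\J(f_*)+\varepsilon$, then to compare $K$ complete cycles of greedy iterations starting at $f_n$ with a virtual schedule inserting each $h_k$ precisely at the step $m$ in those cycles where $i(m)=j_k$. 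At each matched step, the greedy optimality inequality $\|g_m\|^2 \geq (\J'(f_m)\cdot h_k)^2/(4\|h_k\|^2)$ (obtained by comparing with $f_m + th_k$ and minimizing over $t\in\R$) provides the required comparison, while the within-cycle control $\|f_{m+j}-f_m\| \leq \sqrt{d}\bigl(\sum_{l=0}^{j-1}\|g_{m+l}\|^2\bigr)^{1/2}\to 0$ ensures that the quantities $\J'(f_m)\cdot h_k$ drift negligibly along a cycle. After careful combinatorial bookkeeping, following \cite{CancesEhrlacherLelievre2011} and adapting to the zero-mean constraint, one obtains $\J(f_{n+Kd}) \leq \J(\tilde f) + o(1)$ as $n\to\infty$, from which $\|f_n-f_*\|^2 = \J(f_n)-\J(f_*) \leq 2\varepsilon$ for $n$ large enough, and strong convergence follows.
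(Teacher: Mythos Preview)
Your first two steps --- the energy-decrease identity $\J(f_n)-\J(f_{n+1})=\|g_n\|^2$, the summability $\sum_n\|g_n\|^2<\infty$, and weak convergence $f_n\rightharpoonup f_*$ via Lemma~\ref{LemControlNorme} and density of $\mathrm{span}(\cup_i\Sigma_i)$ in $H$ --- are correct and essentially match the paper (the paper uses a telescoping identity $h-\int h=\sum_{j}(r_j-\int r_j)\prod_{l<j}(\int r_l)\prod_{l>j}r_l$ instead of your Fourier argument, but both work).

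The gap is in your strong-convergence step. The optimality inequality $\|g_m\|^2\geqslant|\langle f_m-f_*,h_k\rangle|^2/\|h_k\|^2$ is correct, but once $f_m\rightharpoonup f_*$ the right-hand side tends to~$0$ for each fixed $h_k$, so these lower bounds become asymptotically vacuous. More importantly, the one-step comparisons $\J(f_{m_k+1})\leqslant\J(f_{m_k}+h_k)$ do not compose: after such a comparison the greedy iterate is $f_{m_k+1}$, not $f_{m_k}+h_k$, so there is no chain of inequalities leading from $\J(f_n)$ to $\J(\tilde f)$. The ``careful combinatorial bookkeeping'' you defer to would have to bridge this non-composability, and this is \emph{not} the argument of \cite{CancesEhrlacherLelievre2011}.

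The paper's actual route (which \emph{is} the adaptation of \cite{CancesEhrlacherLelievre2011}) reuses Lemma~\ref{LemControlNorme} in a different way. Write $2\langle f_*-f_n,f_n\rangle=-\J'(f_n)\cdot f_n$ and decompose $f_n=f_0+\sum_{k<n}g_k$; since each $g_k\in\cup_i\Sigma_i$, Lemma~\ref{LemControlNorme} gives
\[
|\J'(f_n)\cdot f_n|\ \leqslant\ |\J'(f_n)\cdot f_0|+6\Bigl(\sum_{k<n}\|g_k\|\Bigr)\sum_{j=0}^{d-1}\|g_{n+j}\|\ \leqslant\ o(1)+6\sqrt{n\,d\,a_n\textstyle\sum_k\|g_k\|^2}\,,
\]
with $a_n=\sum_{j=n}^{n+d-1}\|g_j\|^2$ (the $f_0$-term vanishes by weak convergence). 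Since $\sum_n a_n<\infty$, one has $\liminf_n n a_n=0$, so along a subsequence $(n_l)$ the bound vanishes; combined with $\langle f_*-f_{n_l},f_*\rangle\to0$ (weak convergence) this yields $\|f_{n_l}-f_*\|\to0$. The monotonicity $\|f_{n+1}-f_*\|\leqslant\|f_n-f_*\|$ you already established then upgrades this to convergence of the full sequence. This subsequence-via-$\liminf n a_n=0$ trick is the missing key idea.
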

\begin{proof}
As in the previous proof, 
\[0 \ = \ \J'(f_n+g_n)\cdot g_n \ = \ 2(1+\lambda)\int \na g_n \cdot  (\na f_{n+1} - F)\theta = 0\]
for all $n\in\N$, so that, 
\begin{equation}\label{eq:decroissance}
\J(f_n) - \J(f_{n+1}) \ = \ \J(f_{n+1}- g_n )- \J(f_{n+1}) \ = \ \| g_n\|^2.
\end{equation}
In particular, since $\po \mathcal J(f_n)\pf_{n\geqslant 0}$ is a decreasing sequence bounded from below,
\begin{equation}\label{eq:finitesum}
 \sum_{n\geqslant 0} \| g_n\|^2 < \infty .
\end{equation}
Together with Lemma \ref{LemControlNorme} and the fact $\J'(f_*) = 0$, this implies that for all $g\in \mbox{\rm Span}\po \cup_{i=1}^d \Sigma_i\pf$,
\begin{eqnarray*}
2\langle f_*-f_n, g\rangle \ = \ \J'(f_*)\cdot g - \J'(f_n)\cdot g& \underset{n\rightarrow \infty}\longrightarrow & 0.
\end{eqnarray*}
Now, for all $r_1,\cdots, r_d \in \mathcal C^{\infty}(\T)$, denoting by $h:= \bigotimes r_i \in \mathcal{C}^{\infty}(\T^d)$ (note that we do not have necessarily that $\int_{\T^d} h = 0$), we can write
\begin{eqnarray*}
h - \int_{\T^d} h  
& = & \sum_{j=1}^d \po r_j - \int_{\T} r_j  \pf \po \prod_{l<j} \int_{\T} r_l  \pf \prod_{l>j} r_l\,,
\end{eqnarray*}
which proves that $h-\int_{\T^d} h \in \mbox{\rm Span}\po \cup_{i=1}^d \Sigma_i\pf$. As a consequence, 
\[ (1+\lambda)\int_{\T^d}  \theta  \, \na (f_*-f_n)\cdot \nabla h\ = \ \langle f_*-f_n, h-\int_{\T^d} h\rangle \underset{n\rightarrow \infty}\longrightarrow  0\,, \]
 As a consequence, the limit $f_\infty$ of any convergent (in the weak sense in $H$) subsequence of $(f_n)_{n\in\N}$  necessarily satisfies that 
$$
\int_{\T^d}  \theta  \, \na (f_*-f_\infty)\cdot \nabla h = 0,
$$
for any tensor product function $h = \bigotimes r_i$, with $r_i\in \mathcal C^\infty (\T)$ for all $1\leq i \leq d$. By \cite[Lemma 2.1]{CancesEhrlacherLelievre2011}, this implies that $f_\infty=f_*$. 
On the other hand, since $\left(\| f_n\|\right)_{n\in \mathbb{N}}$ is bounded, all its subsequences admits weak convergent subsequences, and the fact they all have the same limit $f_*$ proves that the whole sequence $(f_n)_{n\in \mathbb{N}}$ 
weakly converges in $H$ to $f_*$. In particular
\begin{eqnarray}\label{Eqfnf*}
\langle f_*-f_n, f_*\rangle  & \underset{n\rightarrow \infty}\longrightarrow  & 0.
\end{eqnarray}
Thus, it only remains to prove that $\left(\langle f_* - f_n, f_n\rangle\right)_{n\in \mathbb{N}}$ also converges to zero as $n$ goes to infinity to obtain the \itshape strong \normalfont convergence of the sequence $(f_n)_{n\in \mathbb{N}}$ to $f_*$ in $H$. 
From Lemma~\ref{LemControlNorme}, 
\begin{eqnarray*}
2| \langle f_* - f_n, f_n\rangle| & = & \left| \J'(f_n)\cdot f_n \right|\\
& \leqslant & \sum_{k=0}^n \left|\J'(f_n)\cdot g_k \right|\\
& \leqslant & 6 \po \sum_{k=0}^{n-1} \| g_k\|\pf \sum_{j=n}^{n+d-1} \| g_j\|\\
& \leqslant &  6 \sqrt{ n d a_n \sum_{k=0}^\infty \| g_k\|^2} .
\end{eqnarray*}
with $a_n = \sum_{j=n}^{n+d-1} \| g_j\|^2$. Using (\ref{eq:finitesum}), since $\sum_{n\in \mathbb{N}} a_n \leq d \sum_{n\in\mathbb{N}} \|g_n\|^2 <\infty$, there exists an extracted subsequence $(n_k)_{k\geqslant1}$ such that $(n_k a_{n_k})_{k\geq 1}$
converges to $0$ as $k$ goes to infinity. As a consequence, $(\langle f_* - f_{n_k}, f_{n_k}\rangle)_{k\geq 1}$ goes to zero as $k\rightarrow \infty$, and thus so does $(\| f_{n_k}-f_*\|)_{k\geq 1}$ by \eqref{Eqfnf*}. 
Finally, from~\eqref{EqJg}, the sequence $(\| f_{n}-f_*\|)_{n\in\mathbb{N}}$ is non-increasing, so that the whole sequence goes to zero. Hence the result.
\end{proof}

\section{Discussion and variations}\label{SubSectionDiscussion} 

For the sake of clarity, the TABF algorithm defined in Section~\ref{Sec:defTABF} has been kept relatively simple, and there is obviously room for many variations or fine-tuning.  We list here a few of them.

\subsection{Extended ABF}\label{Sec-EABF}
Consider general reaction coordinates $\xi:\T^D \rightarrow \mathcal M$ where $\mathcal M$ is a submanifold of $\mathbb{R}^d$ or $\T^d$. In the Extended ABF (EABF) algorithm introduced in \cite{LelievreEABF} (see also \cite{Lesage-EABF,ChipotEABF}), 
the state space is extended to $\T^D \times \mathcal M$ with the addition of auxiliary variables (or fictitious particles) $z\in \mathcal M$, and the potential $V$ on $\T^D$ is extended to a potential $\tilde V$ on $\T^D\times\mathcal M$ as
\[\tilde V(q,z) \ = \ V(x) + \frac{1}{2\sigma^2} \po \text{dist}_{\mathcal M}\po \xi(q),z\pf\pf^2, \quad \forall (q,z)\in \T^D \times \mathcal M, \]
for some small parameter $\sigma>0$, where $\text{dist}_\mathcal M$ stands for the distance on $\mathcal M$. 
The reaction coordinates on the extended space are then defined by $\tilde \xi(q,z) = z$, which means the framework considered in the present paper is general for
the EABF algorithm. If $(Q,Z)$ is distributed according to $\mu_{\tilde V,\beta}$, the law of $Z$ is obtained from the law of $\xi(Q)$ through a Gaussian convolution of
variance $\sigma^2/\beta$.  There are several practical advantages to EABF:
\begin{itemize}
\item In the potential $\tilde V$, in the case where $\text{dist}_\mathcal M$ is an Euclidean distance, the $z_i$'s for $i\in\cco 1,d\ccf$ appear 
in separate terms of a sum, they are not directly coupled. As a consequence, in the EABF case, $\mu_{A,\beta}$ should be, in some sense, closer to the product of 
its marginal (namely, at equilibrium, the $Z_i$'s should be closer to be independent) than in the non-extended ABF case. In \cite{ChipotEABF}, this was a crucial point since the 
density $\mu_{A,\beta}$ was approximated by a tensor product. But even in our case where the approximation as a sum of tensor product is made at the level of $A$, we can expect this form of $\tilde V$ to improve the approximation.
\item After convolution, the so-called mean-force $\na_z A$
is smoother than the initial mean force in the non-extended ABF case. Since it varies less, its estimation is expected to be easier.
\end{itemize}
That being said, the tensorized ABF introduced above can also be straightforwardly  extended  to a general ABF framework, without extended coordinates. 

\subsection{Non-periodic reaction coordinates} \label{Sec-Reflection}

In general, $\mathcal M$ may be different from $\T^d$. If it has boundaries, for instance if $\mathcal M=[0,1]^d$, the definition of the algorithm is the same except that the diffusion \eqref{EqEABF} is reflected at the boundaries of $\mathcal M$. The proof of well-posedness and convergence of the tensor algorithm, i.e. Theorem~\ref{ThmTens}, is unchanged. The proof of the long-time convergence of the idealized algorithm, i.e. Theorem~\ref{TheoremTempsLong}, is similar up to technical considerations in particular to take into account boundary conditions in Section \ref{Sec:prelimEstimTempsLong}.

Moreover, $\mathcal M$ may  not be compact, for instance $\mathcal M=\R^d$\nv{, with $U$ satisfying suitable growth conditions at infinity}. Since the Lebesgue measure has not a finite mass, a confining biasing potential has to be added to the adaptive biasing potential, see \cite[Section 1.2]{LelievreABF}, in which case the law of $\xi(X_t)$ does not converge to a uniform law (flat histogram) but to a target unimodal law on $\R^d$.

\subsection{Real implementation}\label{Sec-Discuss-Real-Implement}

The algorithm really implemented for the numerical experiments in Section \ref{SectionNumerique} differs from the theoretical Algorithm \ref{algo:TABF} in the following points:

\begin{enumerate}
\item Time and space are discretized. The SDE \eqref{EqEABF} is replaced by an Euler-Maruyama scheme 
with some timestep $\delta t$ and the time integral in \eqref{Eq-Empiric-Distrib} is replaced by a discrete sum with a 
timestep $\Delta t$ (not necessarily small; it can be of the order of the decorrelation length of the process $(Q_t,Z_t)_{t\geqslant 0}$). 
The one-dimensional functions in the tensor terms are restricted to be continuous piecewise linear, determined by their value on a discrete grid 
with $q\in\N_*$ points, so that solving \eqref{eq:EDPrk} amount to solve a $q\times q$ linear system. In particular, the discrete space 
interpolation plays a role similar to the regularization kernel $K$ which is no more necessary, hence is discarded.

\item In fact, it is not necessary to re-weight the occupation distribution, namely 
\eqref{Eq-Empiric-Distrib} can be replaced by $\nu_t = 1/t\int_0^t \delta_{(Q_s,Z_s)}\dd s$. In that case,
$\nu_t$ is expected to converge to $\mu_{V-\tilde A_*,\beta}$ for some $\tilde A_*$ instead of $\mu_{V,\beta}$ but the conditional law of $Q$ 
given $Z=z$ is the same for these two laws. Since the free energy only depends on these conditional laws, $A_t$ is still 
expected to converge to $\tilde A_*$, that should be close to the true free energy in a sense similar 
to \eqref{Eq-Erreur-A-A*}. This is clear in the mean-field limit of the algorithm, where no regularization is needed so
that $\tilde A_*=A$ (see \cite{LelievreABF}). It is more difficult to establish for the self-interacting ABF process, but in parallel of the present paper it 
has been done in \cite{BenaimBrehierMonmarche}.  We tried numerically both cases, and the results were similar. The results presented in Section \ref{SectionNumerique} 
are obtained with the full (non reweighted) occupation distribution. 

\item Instead of a single particle, in practice, several replicas of the process \eqref{EqEABF} are simulated
in parallel. Denoting $N$ the number of replicas and $(Q^i_t,Z^i_t)_{t\geqslant 0}$ the $i^{th}$ replica, $i\in \cco 1,N\ccf$, the total empirical distribution of the system is 
\begin{eqnarray}\label{Eq-nutildeN}
\tilde \nu_{N,t} \ = \ \frac1{N\lfloor t/\Delta t\rfloor} \sum_{i=1}^N \sum_{k=1}^{\lfloor t/\Delta t\rfloor} \delta_{(Q_{k\Delta t}^i,Z_{k\Delta t}^i)}\,.
\end{eqnarray}
The replicas all use the same bias $A_t$ obtained from this empirical distribution by minimizing $\J_{\tilde \nu_{N,t}}$ at times $t=t_k = k T_{up}$.

\end{enumerate}

\subsection{Other possible simple variations} \label{Sec-Variations}

\begin{enumerate}
\item  From the biased trajectory $(Q_t,Z_t)_{t\geqslant 0}$ provided by the TABF algorithm, in order 
to compute expectations with respect to the target Gibbs measure $\mu = \mu_{V,\beta}$, an alternative to the 
reweighting step \eqref{eq:ImportanceSampling} is the following. Remark that only the $Z$ variable is biased, so that for all $z\in \T^d$, 
the conditional expectations $\int_{\T^p} f(q,z) \mu(q,z) \dd q /\int_{\T^p}  \mu(q,z) \dd q$ 
can be estimated without re-weighting. On the other hand, the marginal law of $Z$ is estimated by $\exp(-\beta A_{T_{tot}})/\int_{\T^d} \exp(-\beta A_{T_{tot}}(z))\dd z$.

\item The bias update period $T_{up}$ and the number $m$ of tensor products added at each update in Algorithm 
\ref{algo:TABF}, instead of having fixed values, could be adaptively chosen. For instance, the bias could be 
updated when the histogram of the reaction coordinates have reached some stability, and $m$ could be the lowest integer $n\in\N$ such that $\J_\nu(A_{t_k}+f_{n-d}) -  \J_\nu(A_{t_k}+f_{n}) \leqslant \varepsilon$ for some threshold $\varepsilon>0$. 

\item A time-dependent weight in the definition \eqref{Eq-Empiric-Distrib} of $\nu_t$ 
(or, in practice, in \eqref{Eq-nutildeN} for $\tilde \nu_{N,t}$) can be added in such a way that old samples have less influence than new ones since they are more biased toward the initial distribution.

\item The regularization kernel $K$ and parameter $\lambda$ may depend on time. Indeed, as time goes, the size 
of the sample increases. Since the problem of minimizing $\J_{\nu_t}$ is in practice solved on a finite dimension space, for a time large 
enough the regularization is actually not necessary anymore and the minimization problem with $K(z,y) = \delta_z(y)$ and $\lambda = 0$ is well-posed.

\item It is possible to use the tensor approximation only as a correction of the classical ABF, or 
more precisely of the Generalized ABF (GABF) algorithm proposed in \cite{ChipotEGABF} where the bias is just a 
sum of one-dimensional functions. Namely, for all time $t>0$, for $j\in\cco 1,d\ccf$, let 
\begin{eqnarray*}
\alpha_{j,t}(z_j)& = & \int_{\T^p\times\T^d} \partial_{y_k} V(q,y) K(y_j,z_j) \dd \nu_{t}(q,y) \\
\beta_{j,t}(z_j) &=&  \int_{\T^p\times\T^d} K(y_j,z_j) \dd \nu_{t}(q,y) \,.
\end{eqnarray*}
These functions can be recorded on $d$ one-dimensional grids and are easily updated on the fly. 
Denoting $ \gamma_{t,j}(z_j)= \1_{\beta_{j,t}(z_j) > s}\alpha_{j,t}(z_j)/\beta_{j,t}(z_j)$ for some burn-in time 
$s>0$, let  $A_{t,j}(z_j) = \int_{0}^{z_j} \gamma_{t,j}(z)\dd z$ if the $j^{th}$ reaction coordinate $z_j$ lies in $\R$ and  
$A_{t,j}(z_j) = \int_{0}^{z_j} \gamma_{t,j}(u)\dd u - z_j\int_0^1 \gamma_{t,j}(u)\dd u $ if $z_j$ lies in $\T$ (so that, in both cases, $\partial_{z_j} A_{t,j}$
is the Helmoltz projection in $L^2(\dd z_j)$ of $\gamma_{t,j}$). Then, at time $t$, in the dynamics \eqref{EqEABF}, use the bias $\na_z A_t$ with $A_{t}(z) = \sum_{j=1}^d A_{t_k,j}(z_j) + f_m(z)$ 
where $f_m$ is a tensor approximation obtained through Algorithm \ref{algo:tenseur} of the minimizer of $H \ni f\mapsto \J_{\nu_{t_k}}(f- \sum_{j=1}^d A_{t_k,j})$, where $t_k = \sup\{t_{k'}<t,\ k'\in\N\}$ is the last update time.

\item For $k\in\N_+$, denote by $\J_{k}^\lambda$ the function given by \eqref{EqJ1} for some $\lambda>0$ with $\nu=\nu_{t_{k}}$. Rather than setting $A_{t_k}$ to be the minimizer of $\J_{k}^\lambda$, we can set it 
to be $A_{t_{k-1}} + f$ where $f$ is the minimizer of 
\begin{eqnarray}\label{eq:Jalternative}
H\ni f & \mapsto & \J_{k}^0\po  A_{t_{k-1}} + f \pf + \lambda\int_{\T^d} |\na_z f(z)|^2 \dd z\,,
\end{eqnarray}
 the difference being that $A_{t_{k-1}}$ does not  appear in the last regularization term any more. Note that, when $\lambda = 0$, there is no difference. 
 When $\lambda>0$, the theoretical results of Section \ref{SectionTensor}, i.e. the well-posedness of the tensor approximation, can be straightforwardly adapted. 
 The long-time behaviour study of Section \ref{SectionCVTempslong} should be similar, although a bit more troublesome since  $A_{t_k}$ would not depend only on the empirical 
 distribution $\eta_{t_k}$ but also on the previous bias $A_{t_{k-1}}$. On the other hand, remark that $0$ is a minimizer of \eqref{eq:Jalternative} if and only if $A_{t_k}$ is a minimizer of $\J_k^0$. As a consequence, 
 the long-time limit of $A_t$ should be the minimizer of $\J_\mu$ with $\lambda=0$ which, in view of \eqref{Eq-Erreur-A-A*}, advocates for this alternative form of cost function.

\item Since the bias is stored in memory in a tensor form, it is possible to use at some times a compression algorithm (see \cite{HOSVD}) to reduce the number of tensor terms, if needed.
\end{enumerate}

\subsection{Some limitations and perspectives}

A practical limitation observed in the algorithm is the following. Recall that $d$ is too large to keep in memory  the empirical measure  
on a grid by simply recording how many times each $d$-dimensional cell has been visited by the process, as in the classical ABF algorithm. Instead, 
the sequence $(Z_{k\Delta t},\nabla_z V(Q_{k\Delta t},Z_{k\Delta t}))_{k\in\N}$ is kept in memory for some $\Delta t>0$, and thus computing 
an expectation with respect to $\nu_t$ has a numerical cost proportional to $t$. Such integrals are computed when solving the one-dimension 
equations \eqref{eq:EDPrk}, which have to be solved repeatedly at each addition of a tensor term to the bias. As $t$ grows, the update of the bias gets 
numerically more expensive. We list here some possible directions to address this question. The analysis of these variations is beyond the reach of the present work.

\begin{enumerate}
 \item  At the beginning of Algorithm \ref{algo:tenseur}, a clustering or quantization algorithm (see \cite{PagesQuantization}) can be used to reduce the memory $(Z_{k\Delta t},\nabla_z V(Q_{k\Delta t},Z_{k\Delta t}))_{k\in\cco 1,t_n/\Delta t\ccf}$ to fewer points.

\item Another way to deal with this problem would be to use a fixed small size for the memory. For instance, at an update time 
$t_k$, the empirical measure used to define $\mathcal J_{\bar \nu_{t_k}}$ could be 
\begin{eqnarray*}
\bar \nu_{t_k}   & =&  \po \int_{t_{k-l}}^{t_k} e^{-\beta A_s(Z_s)}\dd s\pf^{-1}  \int_{t_{k-l}}^{t_k} \delta_{(Q_s,Z_s)} e^{-\beta A_s(Z_s)} \dd s
\end{eqnarray*}
for some small $l\in\N_*$, say $l=1$. In that case, in order to expect a long-time convergence of the bias, 
following classical stochastic algorithms, we would define the new bias as $A_{t_k} = A_{t_{k-1}} + \gamma_k f_k$ where $f_k$ is (a tensor approximation of) a minimizer over $H$ of $H\ni f\mapsto \J_{\nu_{t_k}}(A_{t_k}+f)$ 
and $(\gamma_k)_{k\in\N}$ is a positive sequence with $\gamma_k \rightarrow 0$ and $\sum_{l=1}^k \gamma_l \rightarrow \infty$ as $k\rightarrow \infty$.

\item Finally, a third way to deal with the memory management as time increases could be to 
use a stochastic gradient descent when solving the one-dimensional partial differential equation \eqref{eq:EDPrk}. In other words, 
when optimizing $r_{i}$ for some $1\leq i \leq d$, instead of computing averages over all steps $l\in\cco 1,t_k/\Delta t\ccf$, only use an approximation of $\nu_{t_k}$
by picking a random (and comparatively small) set of steps among $\cco 1,t_k/\Delta t\ccf$. Then only an estimation of the gradient of $H^1(\T) \ni r_i\mapsto \mathcal J_{t_k}(f+\bigotimes_{j=1}^d r_j)$ 
is computed, which is exactly the settings of the stochastic gradient descent.
\end{enumerate}

A second possible limitation is the following. Note that, as the number of reaction coordinates increases, 
we can expect that, at some point, the biasing scheme becomes unefficient. Indeed, by flattening the energy landscape, we replace the initial 
sampling problem (that was mainly restricted to low-energy regions, which form a low-dimensional manifold) by the sampling of the uniform measure on 
some hypercube, which is not so easy. In some sense, following the definitions of \cite{LelievreMetastable}, at some point, energy barriers
are replaced by entropic ones \nv{(which means that, in the exploration of the space, what takes time is not crossing high energy areas but visiting all areas in a relatively high dimensional space)}. Moreover, the variance of the estimator \eqref{eq:ImportanceSampling} increases due to the exponential weights. As a consequence, 
as $d$ increases, a partial biasing with $V_{bias,t} = \theta A_t\circ\xi$ for some $\theta\in(0,1)$ may be more appropriate than the full biasing (i.e. $\theta =1$). At the biased equilibrium, 
if $A_t = A$ is the true free energy, the marginal law of the reactions coordinates is thus $\mu_{(1-\theta)A,\beta}$, i.e. 
the temperature is increased. Then the choice of $\theta$ such that this measure satisfies a Poincar\'e inequality with minimal constant (which means the 
corresponding overdamped Langevin process mixes the fastest) may not be $\theta=1$.


\section{Numerical experiments}\label{SectionNumerique}

Let us fix some details and parameters that will hold for the different examples below. In this section, the modifications 
discussed in Section~\ref{Sec-Discuss-Real-Implement} are enforced. 

The one-dimensional functions $r_{n,j}$ are stored for all $n\in \mathbb{N}$ and $1\leq j \leq d$ on a discrete grid with $M=30$ points, 
so that the minimization of functions of the form $\bigotimes_{j=1}^d r_j \mapsto \J_{\nu_t}(f+\bigotimes_{j=1}^d r_j)$ 
is restricted to tensor products of one-dimensional continuous piecewise linear functions on this grid, and the Euler-Lagrange equations \eqref{eq:EDPrk} are replaced by $M\times M$ linear systems. 
This discretization replaces the regularization by a kernel $K$, which is no more necessary.  
The process \eqref{EqEABF} is discretized with a time-step $\delta t = 25.10^{-5}$, while the time integral in the empirical measure $\nu_t$ defined in \eqref{Eq-Empiric-Distrib} is discretized with a time-step 
$\Delta t = 20\delta t$. In other words, the reaction coordinates and the associated local mean forces are recorded in memory only every 
20 steps of the Euler scheme. Moreover, $N$ independent replicas of the processes are run in parallel and the occupation measure used to defined the bias is 
$\tilde \nu_{N,t}$ given by \eqref{Eq-nutildeN}. The update times $t_k$ of the bias are fixed at $t_k = kT$, with $T$ a multiple of $\Delta t$ 
and the number of tensor terms $g_{n}$ added at each update time is fixed with value $m$.

\subsection{A low dimensional  example}\label{subsec:toymodel}

We start to test the method on a toy model, with $N=30$ replicas, a bias update period of $T=100\Delta t$, a regularization parameter  $\lambda = 10^{-5}$, and $m=8$ tensor products added at each update. 
The reaction coordinates are Euclidean coordinates, more precisely $\xi(x)=(x_1,x_2)$, so that we don't introduce any additional extended coordinate. 
%


The dimensions are $D=3$, $d=2$, particles start at $(0,0,0)$ and
\begin{eqnarray*}
V(x_1,x_2,x_3)& =& - \sin(3x_1)\sin(x_2)\cos(x_3-1) +  \cos(3x_2+2)(0.5+\cos(x_3-2))\\
& & \ +\  2\sin(2x_1+0.5)\cos(x_3) - 5\cos(x_1)\cos(x_2)\cos(x_3+1)\,.
\end{eqnarray*}
This potential has the following properties: it is not a tensor 
product and yields a metastable process but, since $V(x_1,x_2,x_3) = \psi(x_1,x_2) \cos(x_3 + \varphi(x_1,x_2))$ for some functions $\psi$ and $\varphi$, there is no metastability in the orthogonal space for fixed $x_1,x_2$.

The results are given in Figures \ref{Figure_energiebet1} and \ref{Figure_histobet1} (for $\beta=1$) and \ref{Figure_energiebet5} and
\ref{Figure_histobet5} (for $\beta=5$). In both cases, the theoretical free energy is successfully computed and the histograms of the reaction coordinates is eventually flat. This is a bit slower with the inverse temperature $\beta=5$, since the initial metastability is very strong. As can be seen in 
Figure \ref{Figure_histobet5}, at that temperature and in the same times, a non-biased process is stuck in its initial well.

\begin{figure}[!h]
\begin{center}
\includegraphics[scale=0.25]{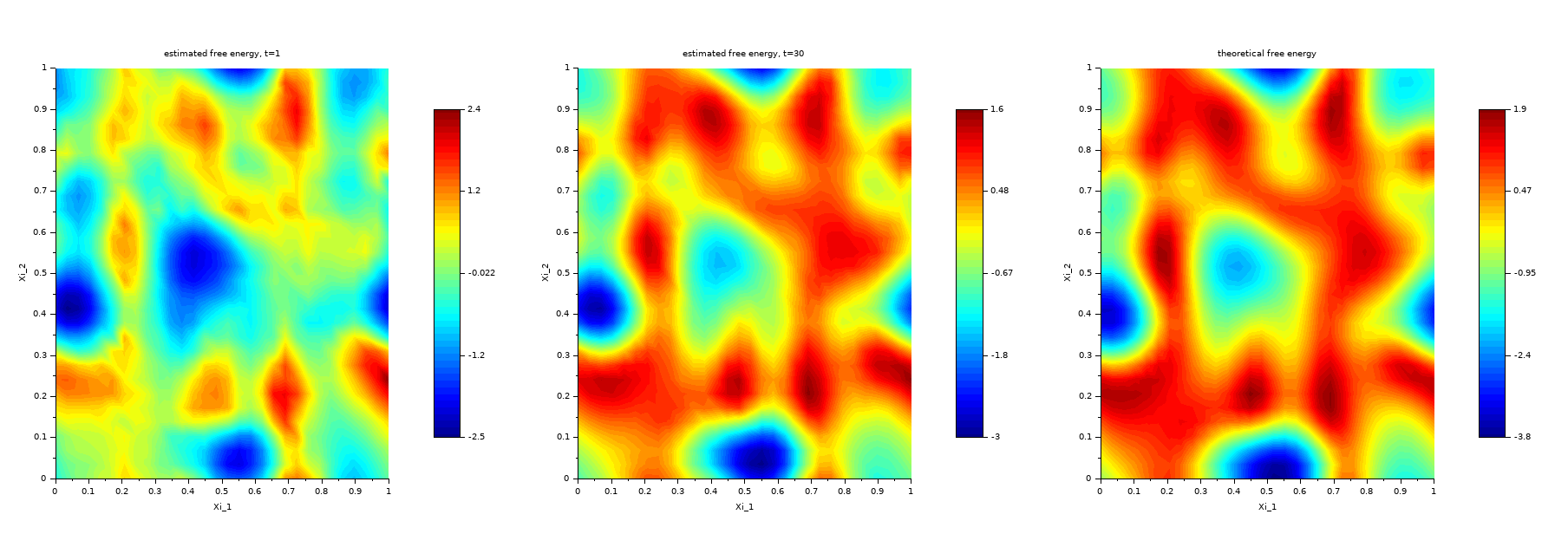}
\caption{For $\beta=1$, left and middle: estimated free energy respectively at $t=1$ and $t=30$. Right: theoretical free energy.}
\label{Figure_energiebet1}
\end{center}
\end{figure}

\begin{figure}[!h]
\begin{center}
\includegraphics[scale=0.25]{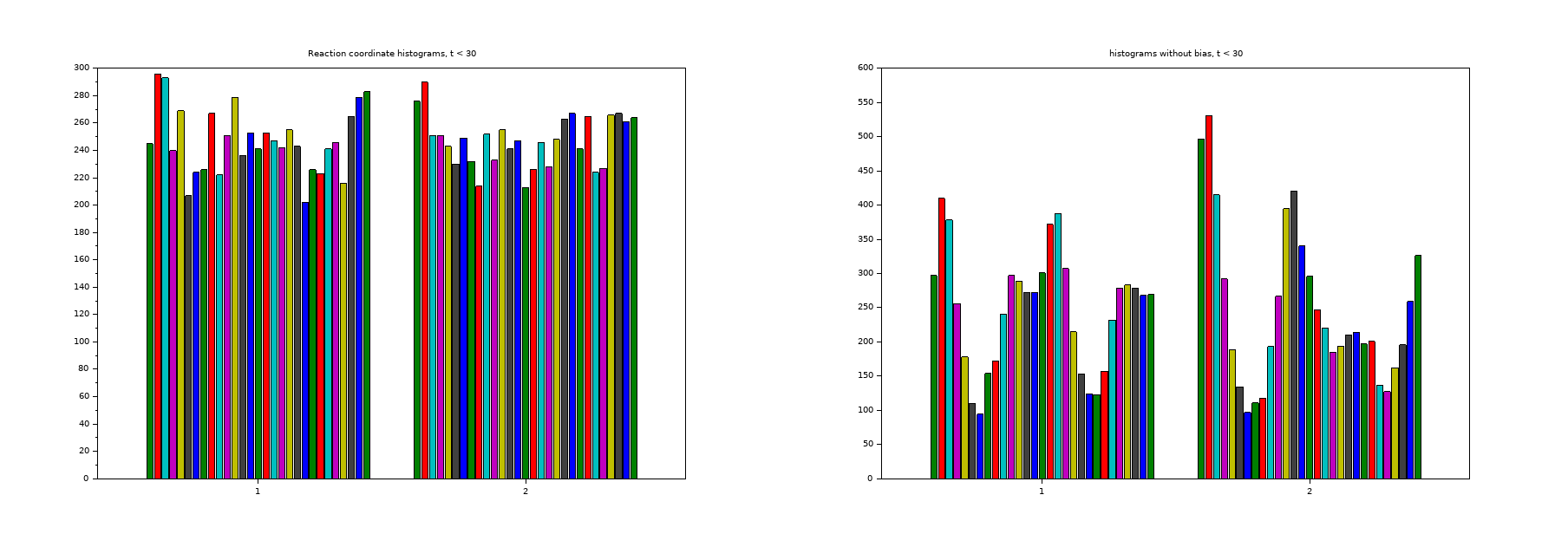}
\caption{For $\beta=1$, cumulated histograms of the reaction coordinates at $t=30$ respectively for the TABF algorithm (left) and a non-biased process (right).}
\label{Figure_histobet1}
\end{center}
\end{figure}

\begin{figure}[!h]
\begin{center}
\includegraphics[scale=0.25]{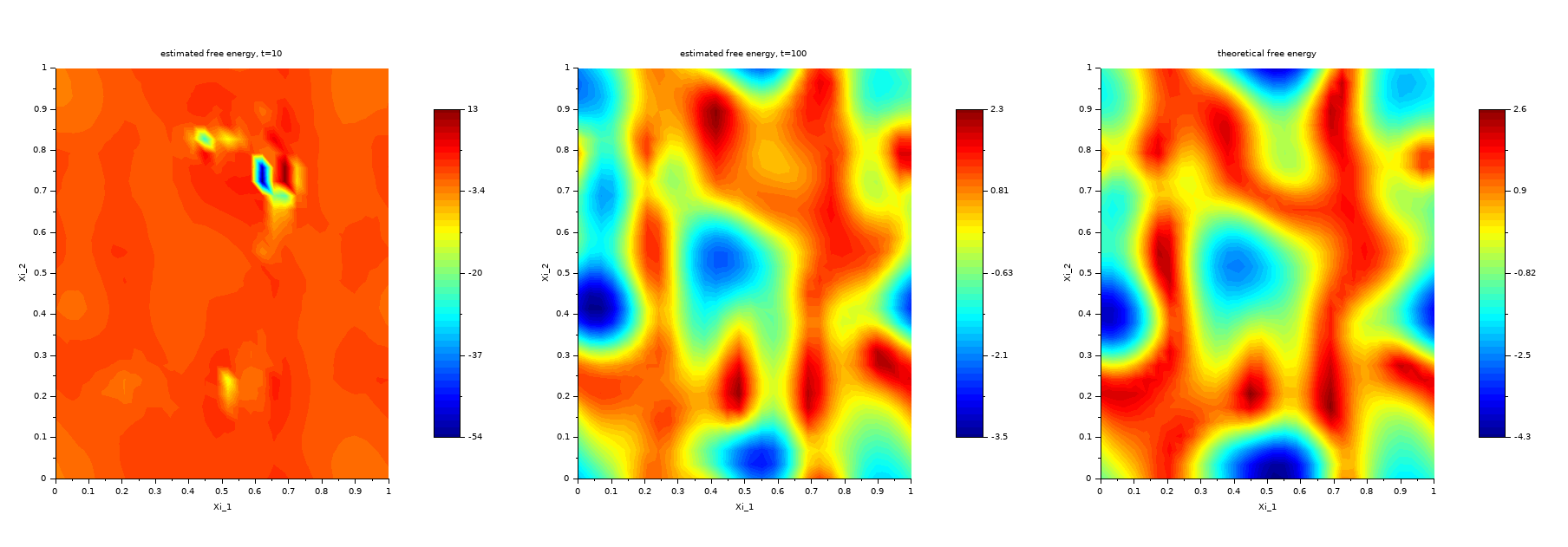}
\caption{For $\beta=5$, left and middle: estimated free energy respectively at $t=10$ and $t=100$. Right: theoretical free energy.}
\label{Figure_energiebet5}
\end{center}
\end{figure}

\begin{figure}[!h]
\begin{center}
\includegraphics[scale=0.25]{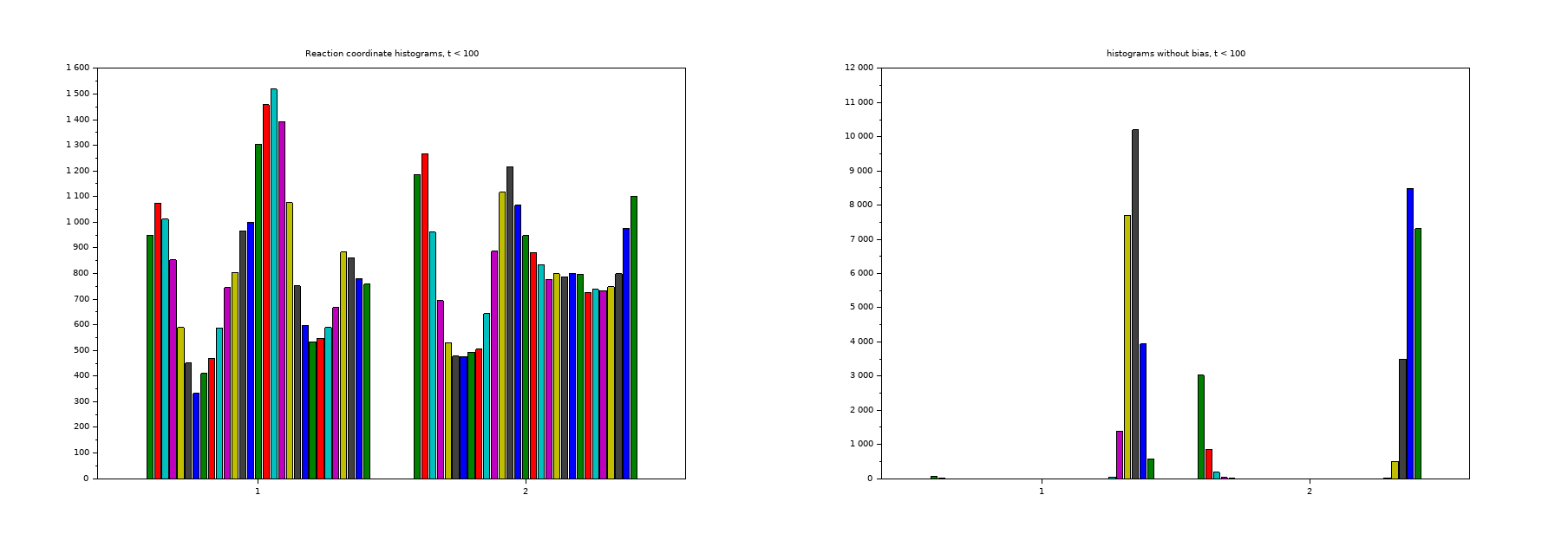}
\caption{For $\beta=5$, cumulated histograms of the reaction coordinates at $t=100$ respectively for the TABF algorithm (left) and a non-biased process (right).}
\label{Figure_histobet5}
\end{center}
\end{figure}

\subsection{Polymer ring in solvent}

We now consider a system inspired from \cite{LelievreAlrachid}. The system is constituted of two types 
of particles, solvent particles, and polymer particles. The polymer particles interact through a potential made precise below to form a ring.  The reaction coordinates are 
the bond lengths between consecutive polymer particles. This gives a large dimensional problem, for which the total dimension and the 
number of reaction coordinates are easily prescribed, and moreover where the reaction coordinates should exhibit some correlations (if it wasn't the case, 
then the TABF algorithm would not give better results than GABF  \cite{ChipotEGABF}).

In a two-dimensional periodic box, we consider $D/2=100$ particles among which $d$ (labeled from 1 to $d$) form a polymer and the others are solvent particles. 
The length of the box is $L=\sqrt{D/2}$, to ensure a concentration independent from $D$. Each pair of particles that involves at least one solvent particle 
interacts through the purely repulsive WCA pair potential, which is the  Lennard-Jones potential truncated at its minimum, namely
\begin{eqnarray*}
V_{WCA}(r)  &= & \varepsilon\1_{r\leqslant r_0}\po 1 + \po \frac{\sigma}{r}\pf^{12} - \po \frac{\sigma}{r}\pf^{6} \pf 
\end{eqnarray*}
where $r$ denotes the distance between the two particles, $\varepsilon=1$,  $\sigma=0.5$    and $r_0= 2^{1/6} \sigma$.
Each pair of consecutive particles in the polymer ring (where the $d^{th}$ and first particles are considered to be consecutive, closing the loop) interacts through  a double well potential 
\begin{eqnarray*}
V_{DW}(r) & = & h \po 1 - \frac{(2r-2r_1-\omega)^2}{\omega^2}\pf^2,
\end{eqnarray*}
where $r_1=r_0$, $\omega=1$ and $h=3$.   The minimum of $V_{DW}$ is attained at $r=r_1$ (compact state) and $r=r_1+w$ (stretched state). Finally, each triplet
of consecutive particles in the polymer  also interacts through the angle $\theta$ they form with the potential
\begin{eqnarray*}
V_{A}(\theta) & = & \frac{1}2 \po \cos(\theta) - \cos(\theta_d)\pf^2
\end{eqnarray*}
with an equilibrium angle $\theta_d = \pi(1-2/d)$ that ensures that the total angular potential is minimized when the polymer particles form a regular polygon.

There are $d$ reaction coordinates, which are the distances between two consecutive polymer particles. Following 
Section \ref{Sec-EABF}, the interaction between an extended reaction coordinate $z$ and the corresponding distance $r$  in the system is given via the extended potential
\begin{eqnarray*}
V_{E}(z,r) & =& \frac{1}{2\delta} \po z - \frac{r-r_1}{w}\pf^2 
\end{eqnarray*}
for   $\delta = 0.01$. The scaling ensures that the minimum of $V_{E}(z,r) + V_{DW}(r) $ is attained at $z=(r-r_1)/w \in \{0,1\}$. Moreover,
in line with Section \ref{Sec-Reflection}, the extended variable is confined in $[\xi_{min},\xi_{max}]^d$ by orthogonal reflection at the boundary,
with $\xi_{min} = -0.2$ and $\xi_{max} = 1.2$. 
%
\begin{figure}[!h]
\begin{center}
\includegraphics[scale=0.3]{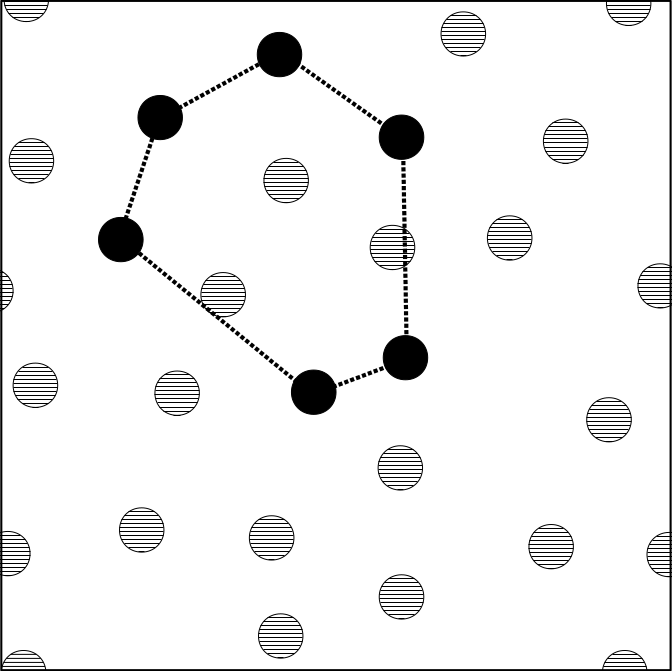}
\caption{The slow motions of the system are the transitions of each bond between two consecutive particles of the polymer from its compact state  to its stretched state.}
\label{Figure_Polymer}
\end{center}
\end{figure}

The total energy of the (extended) system is thus, for $(q,z)\in(L\mathbb T)^{D}\times [\xi_{min},\xi_{max}]^d$,
\begin{eqnarray*}
V(q,z) & =& \sum_{i=d+1}^{D/2} \sum_{j<i} V_{WCA}\po |q_i-q_j|\pf +  \sum_{i=1}^d   V_{E}\po z_i,|\tilde q_{i+1}-q_i|\pf   \\
& & + \sum_{i=1}^d V_{DW}\po |q_i - \tilde q_{i+1}|\pf  +  \sum_{i=1}^{d-1} V_A\po \arccos \po\frac{\tilde q_{i+1}-q_i}{|\tilde q_{i+1}-q_i|}\cdot \frac{\tilde q_{i+2}-\tilde q_{i-1}}{|\tilde q_{i+2}-\tilde q_{i+1}|} \pf \pf\,,
\end{eqnarray*}
where $\tilde q_i = q_i$ for all $i\in\cco 1,d\ccf$ and $\tilde q_{d+j} = q_j$ for $j=1,2$. Initially, the polymer is in a compact state, i.e. 
the distances between two consecutive of its particles are at distance $r_1$, the  angles are  $\theta_d$ and all the extended variables  $(z_i)_{i\in\cco 1,d\ccf}$ are at 0. 
For this model, we use the variant described in point 5 of Section \ref{Sec-Variations} namely, following the GABF algorithm, we keep in memory one dimensional free energies on a grid and we use the tensor approximation as a correction of this initial guess. There are $N=50$ replicas, the update period, regularization parameter, and inverse temperature are respectively $T=10^4\Delta t$, $\lambda = 0.05$ and $\beta=1$, and at each update, $m=4d$ tensor products are added.

 The free energy is expected to be close to a sum of one-dimensional  double well potentials, with minima attained at points close to 0 and 1. Nevertheless 
 the angular force should favor configurations where the consecutive distances in the polymer are close. This fact cannot be grasped by the GABF algorithm alone, 
 for which reaction coordinates are treated independently one from the others.
 
  The results are presented in Figure \ref{Figure_Polymer3} for $d=3$ and Figures \ref{Figure_Polymer5} and \ref{Figure_Polymer5_histo} for $d=5$.  
  In Figure~\ref{Figure_Polymer3}, we see that indeed the one-dimensional free energies recovered by the GABF algorithm have two wells approximately at 0 and 1, and that the  
  non-independent part of the free energy has the following effect: when $z_3=0$,  the well $(0,0)$ is favored, when $z_3=1$ the same goes for $(1,1)$, and when $z_3$ is intermediate
  the landscape is flatter and the two wells $\{z_1=z_2=x\}$ with $x\in\{0,1\}$ are favored with respect to the wells $(0,1)$ and $(1,0)$. The result is similar in Figure \ref{Figure_Polymer5}, even 
  though the quality of the estimation is lower for $z_3=z_4=z_5=0.5$, which is to be expected as this lies in a very low probability area (since $0.5$ is the saddle point of the two well potential). This
  shows that the TABF algorithm is able to recover non-trivial correlations between reaction coordinates.
  
  \nv{For $d=5$, the   free energy is eventually approximated with $140$ tensor terms ($7$ updates, adding $20$ terms each), which means $5\times 140 = 700$ one-dimensional functions have been stored, each represented by $M=30$ numbers. This is orders of magnitude below the cost $30^5=2,43 \times 10^{7}$ required to store a $5$-dimensional function on a grid of the same precision. Moreover, since the total number of time steps of the simulation is of order $10^6$, most of the points of the $5$-dimensional grid  have never been visited during the whole simulation, so we would'nt have any estimation of the free energy with a classical ABF algorithm.}
 

\begin{figure}[!h]
\begin{center}
\includegraphics[scale=0.33]{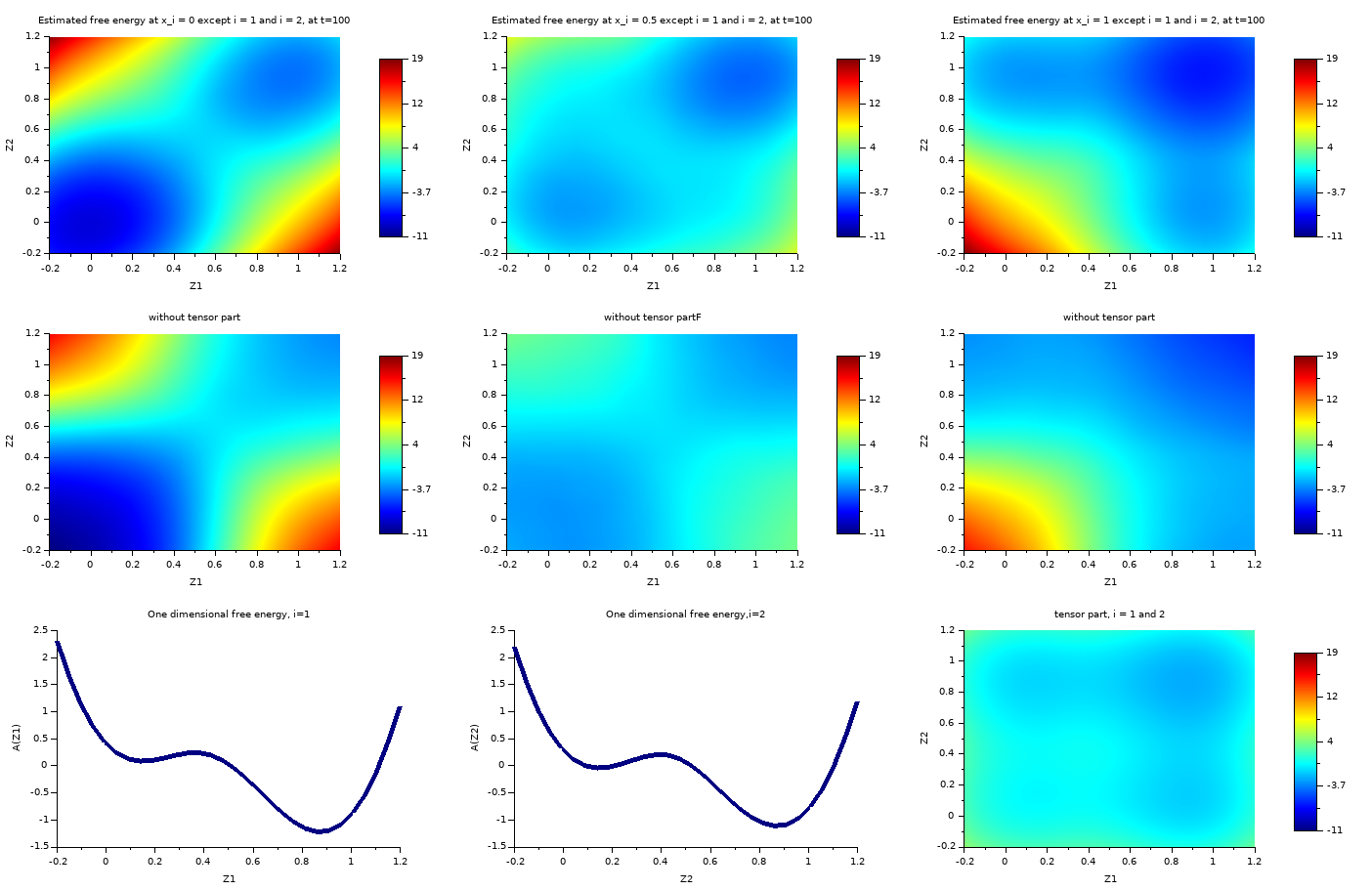}
\caption{For $d=3$,  up: estimated free energy as a function of $(z_1,z_2)$ when $z_3$ is, respectively, 0 (left) 0.5 (middle) and 1 (right). Middle: 
idem but without the independent, one-dimensional parts given by the GABF algorithm. Bottom: one-dimensional potential given by the GABF algorithm for $z_1$ (left) $z_2$ (middle) and their sum (right). }
\label{Figure_Polymer3}
\end{center}
\end{figure}


\begin{figure}[!h]
\begin{center}
\includegraphics[scale=0.3]{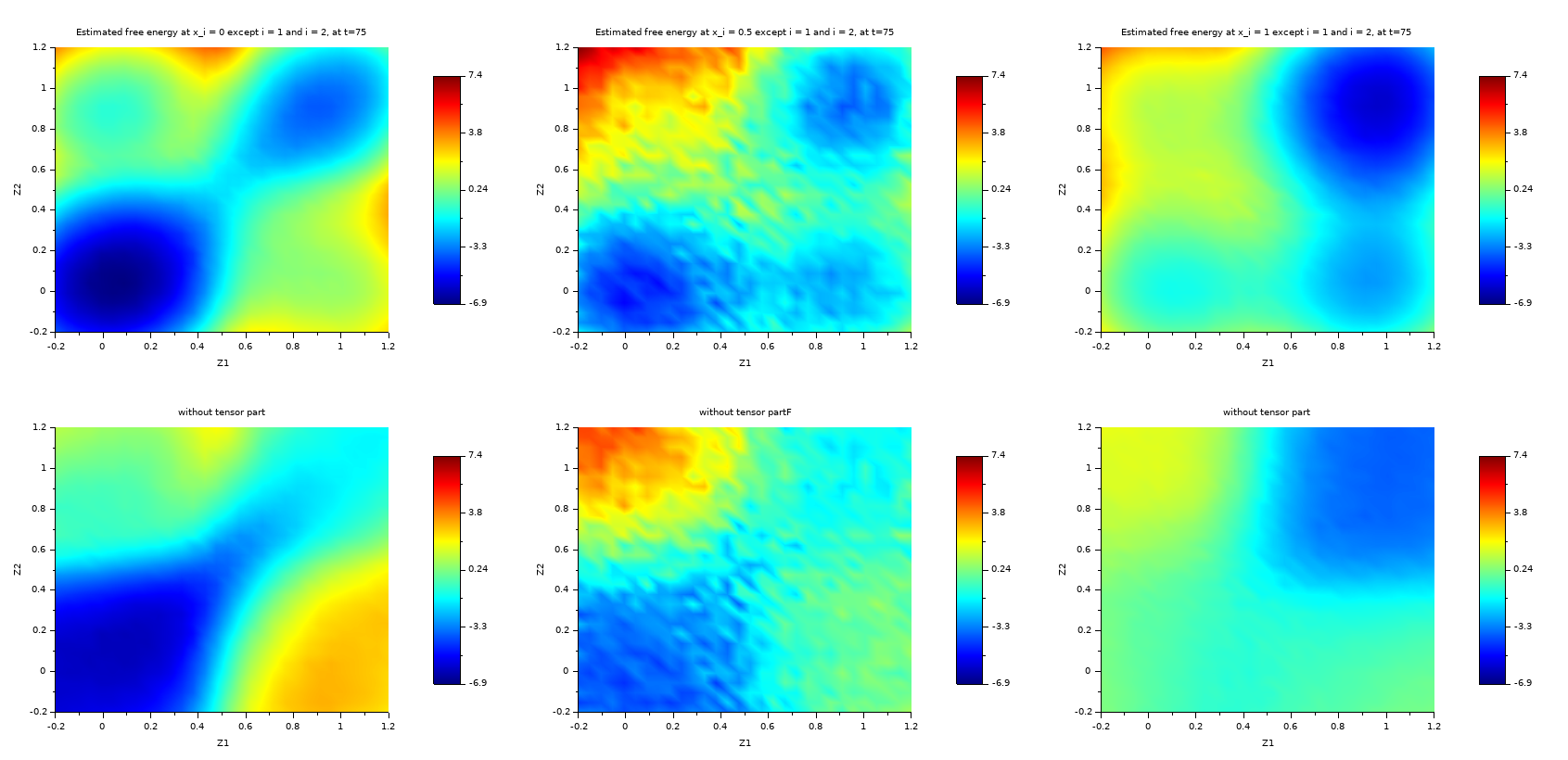}
\caption{For $d=5$, up: estimated free energy as a function of $(z_1,z_2)$ when $z_3$, $z_4$, $z_5$ are, respectively, $(0,0,0)$ (left) $(0.5,0.5,0.5)$ (middle) and $(1,1,1)$ (right). Bottom: idem but without the independent, one-dimensional parts given by the GABF algorithm. }
\label{Figure_Polymer5}
\end{center}
\end{figure}

\begin{figure}[!h]
\begin{center}
\includegraphics[scale=0.3]{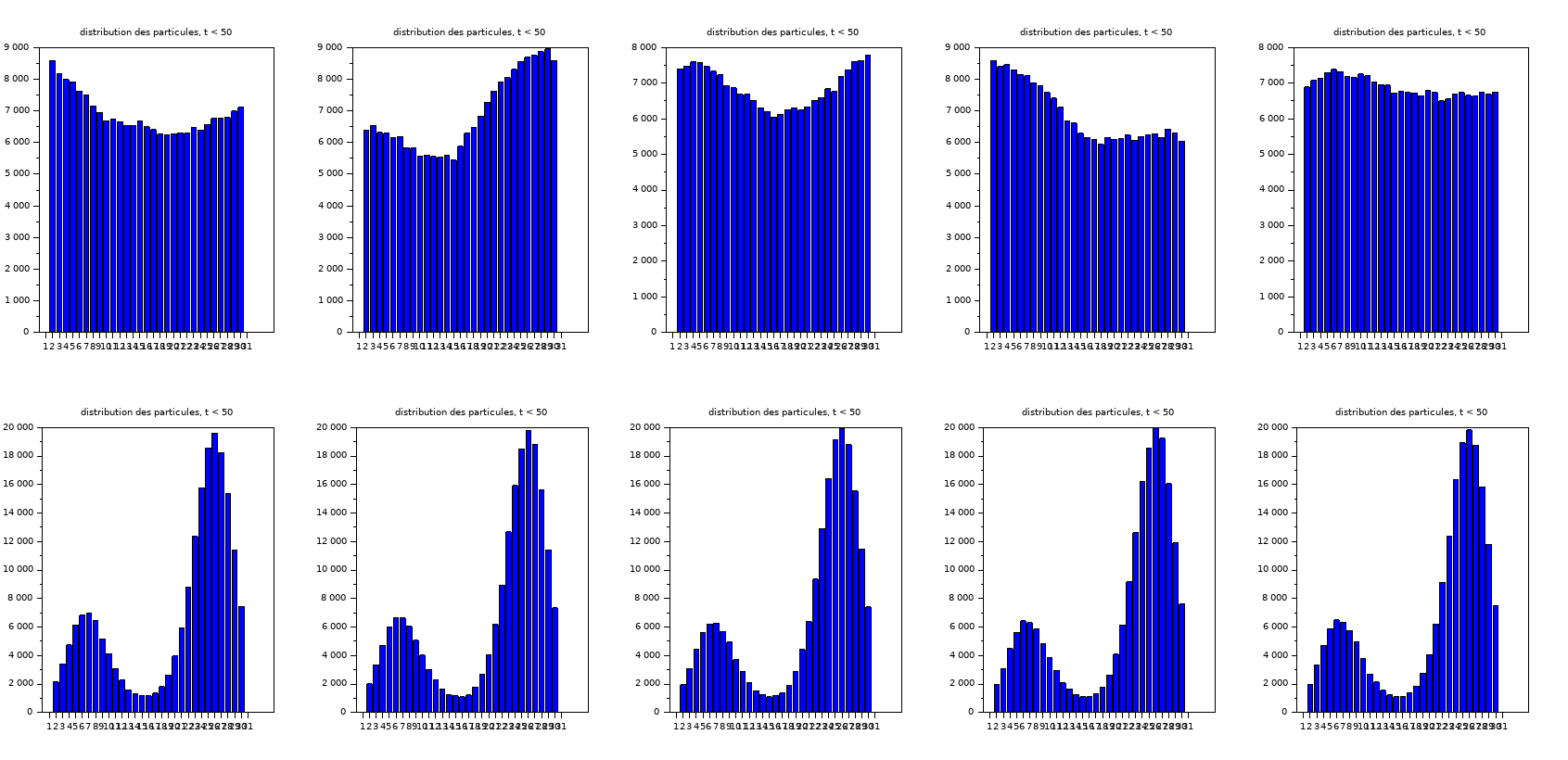}
\caption{For $d=5$, cumulative one-dimensional histograms of the five reaction coordinates at $t=50$ for the TABF algorithm (up) and for a non biased process (bottom).}
\label{Figure_Polymer5_histo}
\end{center}
\end{figure}

\section*{Acknowledgements}

This work was supported by the European Research Council under the European Union's Seventh Framework Programme (FP/2007-2013) / ERC Grant Agreement number 614492 and under the European Union's Horizon 2020 Research and Innovation Programme, ERC Grant Agreement number 810367, project EMC2. It was also supported by the ANR JCJC project COMODO (ANR-19-CE46-0002) and the ANR Project EFI (ANR-17-CE40-0030) of the French National Research Agency. The authors would like to thank the associate editor and the referees for their work.



\bibliographystyle{plain}
\bibliography{biblioTABF}

\end{document}